\newcommand{\der}{\delta}
\newcommand{\Qha}{\Hat{\mathcal{Q}}}
\newcommand{\cacha}{\Hat{\mathcal{C}}}
\newcommand{\delha}{\hat{\delta}}
\newcommand{\Laha}{\hat{\Lambda}}
\newcommand{\norm}[1]{\lVert #1\rVert}
\newcommand{\yti}{\tilde{y}}
\newcommand{\zti}{\tilde{z}}
\newcommand{\cB}{\mathcal{B}}
\newcommand{\ka}{\kappa}
\newcommand{\ott}{[0,T]}
\newcommand{\cfx}{c_{f,\textbf{X}}}
\DeclareMathOperator{\id}{\text{Id}}
\newcommand{\eqcolon}{\mathrel{\mathord{=}\raise.2\p@\hbox{:}}}
\newcommand{\coloneq}{\mathrel{\raise.2\p@\hbox{:}\mathord{=}}}
\newcommand{\RR}{\mathbb{R}}
\newcommand{\CC}{\mathcal{C}}
\newcommand{\R}{\mathbb R}
\newcommand{\N}{\mathbb N}
\newcommand{\cb}{\mathcal B}
\newcommand{\cac}{\mathcal C}
\newcommand{\cj}{\mathcal J}
\newcommand{\cl}{\mathcal L}
\newcommand{\cn}{\mathcal N}
\newcommand{\cq}{\mathcal Q}
\newcommand{\cx}{\mathcal X}
\newcommand{\cs}{\mathcal S}
\newcommand{\cp}{\mathcal P}
\newcommand{\al}{\alpha}
\newcommand{\ep}{\varepsilon}
\newcommand{\ga}{\gamma}
\newcommand{\si}{\sigma}
\newcommand{\vp}{\varphi}
\newcommand{\lp}{\left(}
\newcommand{\rp}{\right)}
\newcommand{\lc}{\left[}
\newcommand{\rc}{\right]}
\newcommand{\lcl}{\left\{}
\newcommand{\rcl}{\right\}}
\newcommand{\lln}{\left|}
\newcommand{\rrn}{\right|}
\newcommand{\bean}{\begin{eqnarray*}}
\newcommand{\eean}{\end{eqnarray*}}
\newcommand{\ben}{\begin{enumerate}}
\newcommand{\een}{\end{enumerate}}
\newcommand{\beq}{\begin{equation}}
\newcommand{\eeq}{\end{equation}}
\newtheorem{theorem}{Theorem}[section]
\newtheorem{definition}[theorem]{Definition}
\newtheorem{notation}[theorem]{Notation}
\newtheorem{hypothesis}{Hypothesis}
\newtheorem{lemma}[theorem]{Lemma}
\newtheorem{proposition}[theorem]{Proposition}
\theoremstyle{remark}
\newtheorem{remark}[theorem]{Remark}
\begin{document}

\title{Non-linear Rough Heat Equations}
\date{\today}

\keywords{Rough paths theory; Stochastic PDEs; Fractional Brownian motion.}

\subjclass[2000]{60H05, 60H07, 60G15}

\author{A. Deya}
\author{M. Gubinelli}
\author{S. Tindel}
\address[A.~Deya, S.~Tindel]{Institut {\'E}lie Cartan Nancy\\ Universit\'e de Nancy\\ B.P. 239,
54506 Vand{\oe}uvre-l{\`e}s-Nancy Cedex, France
}
\email{deya@iecn.u-nancy.fr, tindel@iecn.u-nancy.fr}

\address[M.~Gubinelli]{CEREMADE\\
Universit\'e de Paris-Dauphine\\
75116 Paris, France}

\email{massimiliano.gubinelli@ceremade.dauphine.fr}
\thanks{This research is supported by the ANR Project ECRU - Explorations on rough paths}

\begin{abstract}
This article is devoted to define and solve an evolution equation of the form $dy_t=\Delta y_t \, dt+ dX_t(y_t)$, where $\Delta$ stands for the Laplace operator on a space of the form $L^p(\R^n)$, and $X$ is a finite dimensional noisy nonlinearity whose typical form is given by $X_t(\varphi)=\sum_{i=1}^N  \, x^{i}_t f_i(\varphi)$, where each $x=(x^{(1)},\ldots,x^{(N)})$ is a $\ga$-Hölder function generating a rough path and each $f_i$ is a smooth enough function defined on $L^p(\R^n)$. The generalization of the usual rough path theory allowing to cope with such kind of systems is carefully constructed.
\end{abstract}


\maketitle

\section{Introduction}
The rough path theory, which was first formulated in the late 90's by Lyons~\cite{Lyons, LyonsBook} and then reworked by various authors \cite{FV, rough}, offers a both elegant and efficient way of defining integrals driven by rough signal. This pathwise approach enables the interpretation and resolution of the standard (rough) differential system
\begin{equation}\label{syst-class}
dy_t=\si(y_t) \, dx_t \quad , \quad y_0=a,
\end{equation}
where $x$ is only a Hölder process, and also the treatment of less classical (rough) differential systems such that the delay equation \cite{NNT} or the integral Volterra systems \cite{DT, DTbis}. In all of those situations, the fractional Brownian motion stands for the most common process for which the additional hypotheses required during the construction are actually satisfied.

\smallskip

In the last few years, several authors provided some kind of similar pathwise treatment for quasi-linear equations associated to non-bounded operators, that is to say of the rather general form 
\begin{equation}\label{syst-gen}
dy_t=Ay_t \, dt+ dX_t(y_t), \qquad t\in\ott
\end{equation}
where $T$ is a strictly positive constant, $A$ is a non-bounded operator defined on a (dense) subspace of some Banach space $V$ and $X\in \CC([0,T]\times V; V)$ is a noise which is irregular in time and which evolves in the space of vectorfields acting on the Banach space at stake. Their results apply in particular to some specific partial differential equations perturbed by samples of (infinite-dimensional) stochastic processes. 

\smallskip

To our knowledge, two different approaches have been used to tackle the issue of giving sense to (\ref{syst-gen}):
\begin{itemize}
\item The first one essentially consists in returning to the usual formulation (\ref{syst-class}) by means of tricky transformations of the initial system (\ref{syst-gen}). One is then allowed to resort to the numerous results established in the standard background of rough paths analysis. 
As far as this general method is concerned, let us quote the work of Caruana and Friz \cite{CF}, Caruana, Friz and Oberhauser~\cite{CFH}, as well as the promising approach of Teichmann \cite{Tei}.
\item The second approach is due to the last two authors of the present paper, and is based on a  formalism which combines (analytical) semigroup theory and rough paths methods. This formulation can be seen as a ``twisted" version of the classical rough path theory.
\end{itemize}
Of course, one should also have in mind the huge literature concerning the case of evolution equations driven by usual Brownian motion, for which we refer to \cite{DPZ} for the infinite dimensional setting and to \cite{Da} for the multiparametric framework. In the particular case of the stochastic heat equation driven by an infinite dimensional Brownian motion, some sharp existence and uniqueness results have been obtained in \cite{PZ} in a Hilbert space context, and in \cite{BE} for Banach valued solutions (closer to the situation we shall investigate). In the Young integration context, some recent efforts have also been made in order to define solutions to parabolic \cite{Maslo-Nualart,glt} or wave type \cite{qt} equations. We would like to mention also the application of rough path ideas to the solution of dispersive equation (both deterministic and stochastic) with low-regularity initial conditions~\cite{g-kdv}.

\smallskip

The present article goes back to the setting we have developed in \cite{GT}, and proposes to fill two gaps left by the latter paper. More specifically, we focus (for sake of clarity) on the case of the heat equation in $\R^n$ with a non-linear fractional perturbation, and our aim is to give a reasonable sense and solve the equation
\begin{equation}\label{eq:heat-intro}
dy_t =\Delta y_t \, dt+ dX_t(y_t),
\end{equation}
where $\Delta$ is the Laplacian operator considered on some $L^p(\R^n)$ space (with $p$ chosen large enough and specified later on), namely 
$$
\Delta: D(\Delta) \subset L^p(\R^n) \to L^p(\R^n).
$$
Then the first improvement we propose here consists in considering a rather general noisy nonlinearity $X$ evolving in a Hölder space  $\CC^\ga(L^p(\R^n);L^p(\R^n))$, with $\ga<1/2$, instead of the polynomial perturbations we had in \cite{GT}. A second line of generalization is that we also show how to push forward the rough type expansions in the semi-group context, and will be able to get some existence and uniqueness results up to $\ga>1/4$, instead of $\ga>1/3$.

\smallskip

As usual in the stochastic evolution setting, we study equation (\ref{eq:heat-intro}) in its mild form, namely: 
\begin{equation}\label{equation-forme-faible}
y_t=S_t y_0+\int_0^t S_{t-s} dX_s(y_s),
\end{equation}
where $S_t: L^p(\R^n) \to L^p(\R^n)$ designates the heat semigroup on $\R^n$. This being said, and before we state an example of the kind of result we have obtained, let us make a few remarks on the methodology we have used. 

\smallskip

\noindent
\textbf{(a)} 
The main price to pay in order to deal with a general nonlinearity is that we only consider a finite dimensional noisy input. Namely, we stick here to a noise generated by a $\ga$-Hölder path $x=(x^{(1)},\ldots,x^{(N)})$ and evolving in a finite-dimensonal subspace of $\CC(L^p(\R^n);L^p(\R^n))$, which can be written as:
\begin{equation}\label{forme-bruit}
X_t(\varphi)=\sum_{i=1}^N  \, x^{i}_t f_i(\varphi),
\end{equation}
with some fixed elements $\{f_i\}_{i=1,\dots,N}$ of $\CC(L^p(\R^n);L^p(\R^n))$, chosen of the particular form
$$
f_i(\varphi)(\xi) = \sigma_i(\xi,\varphi(\xi))
$$
for sufficiently smooth functions $\sigma_i : \RR^n \times \RR \to \RR$.

\smallskip

Note that the hypothesis of a finite-dimensional noise is also assumed in \cite{CF} or \cite{Tei}. Once again, our aim in \cite{GT} was to deal with irregular homogeneous noises in space, but we were only able to tackle the case of a linear or polynomial dependence on the unknown. As far as the form of the nonlinearity is concerned, let us mention that \cite{CF} deals with a linear case, while the assumptions in \cite{Tei} can be read in our setting as: one is allowed to define an extended function $\tilde f_i(\varphi):=S_{-t}f_i(S_t \vp)$, which is still a smooth enough function of the couple $(t,\vp)$. As we shall see, the conditions we ask in the present article for $f_i$ are much less stringent, and we shall recover partially the results of \cite{Tei} at Section \ref{sec:glob-sol}. 

\smallskip

\noindent
\textbf{(b)} In order to interpret (\ref{equation-forme-faible}), the reasoning we will resort to is largely inspired by the analysis of the standard rough integrals. For this reason, let us recall briefly the main features of the theory, as it is presented in \cite{rough}: the interpretation of $\int y_s \, dx_s$ (with $x$ a finite-dimensional irregular noise) stems from some kind of \textit{dissection} of the usual Riemann-Lebesgue integral $\int y \, d\tilde{x}$, when $\tilde{x}$ is a regular driving process. This work of dismantling appeals to two recurrent operators acting on spaces of $k$-variables functions ($k \geq 1$): the increment operator $\der$ and its potential inverse, the sewing map $\Lambda$, the existence of which hinges on some specific regularity conditions. If $y$ is a $1$-variable function, then $\der$ is simply defined as $(\der y)_{ts}=y_t-y_s$, while if $z_{ts}=\int_s^t (y_t-y_u) \, d\tilde{x}_u$, then $(\der z)_{tus}=(\der y)_{tu}(\der \tilde{x})_{us}$. With such notations, one has for instance
$$\int_s^t y_u \, d\tilde{x}_u=\lp \int_s^t d\tilde{x}_u \rp y_s+\int_s^t (y_t-y_u) \, d\tilde{x}_u=\lp \int_s^t d\tilde{x}_u \rp y_s+\lp \der^{-1} \lp (\der y)(\der \tilde{x}) \rp \rp_{ts}.$$
Of course, the latter equality makes only sense once the invertibility of $\der$ has been justified.

\smallskip

During the process of dissection, it early appears, and this is the basic principles of the rough path theory, that in order to give sense to $\int y_s \, dx_s$, it suffices to justify the existence of the iterated integrals associated to $x$: $x^1_{ts}=\int_s^t dx_u$, $x^2_{ts}=\int_s^t dx_u \int_s^u dx_v$, etc., up to an order which is linked to the Hölder regularity of $x$. If $x$ is $\ga$-Hölder for some $\ga >1/2$, then only $x^1$ is necessary, whereas if $\ga \in (1/3,1/2)$, then $x^2$ must come into the picture.

\smallskip

Once the integral has been defined, the resolution of the standard system
\begin{equation}\label{dim-finie}
(\der y)_{ts}=\int_s^t \si(y_u) \, dx_u \quad , \quad y_0=a,
\end{equation}
where $\si$ is a regular function, is quite easy to settle by a fixed-point argument.

\smallskip

\noindent
\textbf{(c)} As far as (\ref{equation-forme-faible}) is concerned, the presence of the semigroup inside the integral prevents us from writing this infinite-dimensional system under the general form (\ref{dim-finie}). If $y$ is a solution of (\ref{equation-forme-faible}) (suppose such a solution exists), its variations are actually governed by the equation (let $s <t$)
$$(\der y)_{ts}=y_t-y_s=S_t y_0-S_s y_0+\int_0^s \lc S_{t-u}-S_{s-u}\rc \, dX_u(y_u)+\int_s^t S_{t-u} \, dX_u(y_u),$$
which, owing to the additivity property of the semigroup, reduces to
\begin{eqnarray}\label{transfo-semigroupe}
(\der y)_{ts}&=& a_{ts}y_s+\int_s^t S_{t-u} \, dX_u(y_u),
\end{eqnarray}
where $a_{ts}=S_{t-s}-\id$. Here occurs the simple idea of replacing $\der$ with the new coboundary operator $\delha$ defined by $(\delha y)_{ts}=(\der y)_{ts}-a_{ts}y_s$. Equation (\ref{transfo-semigroupe}) then takes the more familiar form
\begin{equation}\label{systeme-final}
(\delha y)_{ts}=\int_s^t S_{t-u} dX_u(y_u) \quad , \quad y_0=\psi.
\end{equation}

In the second section of the article, we will see that the operator $\delha$, properly extended to act on $k$-variables functions ($k\geq 1$), satisfies properties analogous to $\der$. In particular, the additivity property of $S$ enables to retrieve the cohomology relation $\delha \delha$, which is at the core of the most common constructions based on $\der$. For sake of consistence, we shall adapt the notion of regularity of a process to this context: a $1$-variable function will be said to be $\ga$-Hölder \textit{in the sense of $\delha$} if for any $s,t$, $|(\delha y)_{ts}| \leq c \lln t-s \rrn^\ga$. It turns out that the properties of $\delha$ suggests the possibility of inverting $\delha$ through some operator $\Laha$, just as $\Lambda$ inverts $\der$. This is the topic of Theorem \ref{existence-laha}, which was the starting point of \cite{GT} and also the cornerstone of all our present constructions.

\smallskip

\noindent
\textbf{(d)}
Sections 3 and 4 will then be devoted to the interpretation of the integral appearing in~(\ref{systeme-final}). To this end, we will proceed as with the standard system (\ref{dim-finie}), which means that we will suppose at first that $X$ is regular in time and under this hypothesis, we will look for a decomposition of the integral in terms of "iterated integrals" depending only on $X$. For some obvious stability reasons, it matters that the dissection mainly appeal to the operators $\delha$ and $\Laha$. 

\smallskip

However, in the course of the reasoning, it will be necessary to control the regularity in time of the function $u\mapsto f_i(y_u)$ as a function of the regularity of $y$. To do so, one can only resort to the tools of standard differential calculus, based on the Taylor formula. Unfortunately, those methods can't take our $\delha$-formalism into consideration. For instance, it would be futile to search for an equivalent of the rule 
\begin{equation}\label{eq:taylor-intro}
\der(f_i(y))_{ts}(\xi)=\int_0^1 dr \, \sigma_i'(\xi,y_s(\xi)+r(\der y)_{ts}(\xi))(\der y)_{ts}(\xi),
\end{equation}
which should be expressed in terms of $\delha$ only. This obvious remark obliges us to alternate the use of the two operators $\der$ and $\delha$, but the procedure raises some issues as far as Hölder regularity is concerned. Indeed, a function which is $\ga$-Hölder in the classical sense, that is in the sense of $\der$, is not necessarily $\ga$-Hölder in the sense of $\delha$. In such a situation, if we refer to the definition of $\delha$ ($(\delha y)_{ts}=(\der y)_{ts}-a_{ts}y_s$), we would like to retrieve $\lln t-s \rrn^\ga$-increments from the operator $a_{ts}$ itself. This can be done by letting the fractional Sobolev spaces come into play. Namely, wet set $\cb=L^p(\R^n)$ and for $\al \in [0,1/2)$, we also write $\cb_{\al,p}$ for the fractional Sobolev space of order $\al$ based on $\cb$ (the definition will be elaborated on in Section \ref{section-cadre}). One can then resort to the relation (see Section \ref{section-cadre})
$$\text{if} \, \varphi \in \cb_{\al,p}, \ \norm{a_{ts} \varphi}_{\cb_p} \leq c \lln t-s \rrn^\al \norm{\varphi}_{\cb_{\al,p}}.$$
Of course, we will have to pay attention to the fact that this time regularity gain occurs to the detriment of the spatial regularity. It is also easily conceived that we will require $\cb_{\al,p}$ to be an algebra of continuous functions, which explains why we work in some $L^p$ spaces with $p$ large enough.

\smallskip

The difficulties evoked by equation (\ref{eq:taylor-intro}) are specific to the non-linear case. If the vectorfields $\{f_i\}_{i=1,\dots,N}$ are linear, then we don't need any recourse to the Taylor formula and the decomposition can be written thanks to $\delha$ and $\Laha$ only. This particular case has been dealt with in \cite{GT}, as well as the polynomial case, for which we suggested a treatment based on trees-indexed integral~\cite{g-ramif,g-abs}. In our situation, we shall see that the landmarks of the construction, that is to say the counterparts of the usual step-2 rough path $(\int dx, \iint dx \otimes dx)$, are (morally) some operators acting on $\cb$, defined as follows: for $\vp,\psi\in\cb$, set
\begin{equation}\label{op-intro-1}
X^{x,i}_{ts}(\vp)=\int_s^t S_{tu}(\vp) \, dx^i_u \quad , \quad 
X^{xa,i}_{ts}(\vp,\psi)=\int_s^t S_{tu}  \lc  a_{us}(\vp)\cdot \psi \rc \, dx^i_u,
\end{equation}
\begin{equation}\label{op-intro-2}
X^{xx,ij}_{ts}(\vp)=\int_s^t S_{tu}(\vp)  \, \der x^j_{us}\,  dx^i_u  ,
\end{equation}
for $i,j=1,\dots,N$,
where $\vp\cdot\psi$ is the pointwise multiplication operator of $\vp$ by $\psi$.

\smallskip

In a quite natural way, the results established in Section \ref{section-cas-young} by using development at first order only, will be applied to a $\ga$-Hölder process $x$ with $\ga>1/2$. The considerations of Section \ref{section-cas-rough}, which involve more elaborate developments, will then enable the treatment of the case $1/3 <\ga \leq 1/2$. Finally a few words will be said about the case $\ga \in (1/4,1/3]$ in Section \ref{section-rough-case-2}, and we shall see how the stack of operators allowing the rough path analysis grows at order 3.

\smallskip

It is also crucial to see how our theory applies  to concrete situations. To this purpose,
using an elementary integration by parts argument, we will see in Section \ref{section-appli-mbf-13} that in order to define the operators given by (\ref{op-intro-1}) and (\ref{op-intro-2}) properly, the additional assumptions on $x$ reduce to the standard rough-paths hypotheses. In this way, the results of this article can be applied to a $N$-dimensional fractional Brownian motion $x$ with Hurst index $H>1/4$, thanks to the previous works of Coutin-Qian \cite{CQ} or Unterberger \cite{Un}. This also means that in the end, the solution to the rough PDE (\ref{eq:heat-intro}) is a continuous function of the initial condition and $x^1,x^2,x^3$, which suggests (as \cite{Tei} does) that one can also solve the noisy heat equation by means of a variant of the classical rough path theory. However, we claim that our construction is really well suited for the evolution equation setting, insofar that the arguments developed here can be extended naturally to an infinite dimensional noise, at the price of some more intricate technical considerations. We plan go back to this issue in a further publication.

\smallskip

With all these consideration in mind, we can now give an example of the kind of result which shall be obtained in the sequel of the paper (given here in the first non trivial rough case for $X$, that is a Hölder continuity exponent $1/3<\ga\le 1/2$):
\begin{theorem}
Let $X$ be a noisy nonlinearity of the form (\ref{forme-bruit}), where:

\noindent
(i) The noisy part $x$ is a $N$-dimensional Hölder-continuous signal in $\cac^\ga(\ott;\R^N)$ for a given $\ga>1/3$. We also assume that $x$ allows to define a Levy area $x^2$ in the sense given by Hypothesis \ref{hypo-x-rough-path}.

\noindent
(ii) Each nonlinearity $f_i$ can be written as $[f_i(\varphi)](\xi) = \sigma_i(\xi,\varphi(\xi))$, where the function $\si_i:\R^n\times\R\to\R$ is such that $\si_i(\cdot,\eta)=0$ outside of a ball $B_{\R^N}(0,M)$, independently of $\eta\in\R$. We also ask $\eta\mapsto\si_i(\xi,\eta)$ to be a $C_b^3(\R)$ function for all $\eta\in\R^N$.

Then equation (\ref{equation-forme-faible}) admits a unique solution $y$ on an interval $\ott$, for a strictly positive time $T$ which depends on $x$ and $x^2$. Furthemore, the solution $y$ is a continuous function of $(y_0,x,x^2)$.
\end{theorem}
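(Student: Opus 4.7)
The plan is to reformulate the mild equation (\ref{equation-forme-faible}) as the fixed-point problem (\ref{systeme-final}) within the $\hd$-calculus developed in the introduction, and to solve it by a Banach contraction in a space of $\ga$-H\"older paths (in the $\hd$ sense) taking values in a Sobolev-type algebra $\cb_{\al,p}$, with $\al>0$ and $p$ large enough that $\cb_{\al,p}$ is a Banach algebra of continuous functions. The rough integral in the right-hand side of (\ref{systeme-final}) is defined, following the dissection strategy of paragraphs (b) and (d) of the introduction, by a two-step expansion of the schematic form
\[
\int_s^t S_{t-u}\,dX_u(y_u) = \sum_{i=1}^N X^{x,i}_{ts}\bigl(f_i(y_s)\bigr) + \sum_{i,j=1}^N X^{xx,ij}_{ts}\bigl(Df_j(y_s)\cdot f_i(y_s)\bigr) + \Laha(\rha)_{ts},
\]
where the operators $X^{x,i}$, $X^{xx,ij}$ of (\ref{op-intro-1})-(\ref{op-intro-2}), together with the auxiliary $X^{xa,i}$, are constructed from $(x,x^2)$ thanks to hypothesis (i), and $\rha$ is a $\hd$-increment whose analytical regularity is strictly greater than one, so that $\Laha$ can be applied to it.

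The heart of the argument consists in checking that the map $\Gamma(y)_t := S_t y_0 + \int_0^t S_{t-u}\,dX_u(y_u)$ stabilises a closed ball in this H\"older space and defines a strict contraction on it for a time $T$ small enough. This relies on two sets of estimates: analytic bounds on $X^{x,i}, X^{xa,i}, X^{xx,ij}$ deduced from the rough norms of $(x,x^2)$, and regularity estimates on the composition $u\mapsto f_i(y_u)$ coming from Taylor's formula (\ref{eq:taylor-intro}). The smoothness assumptions on $\sigma_i$ (compact support in $\xi$ and $C_b^3$ regularity in $\eta$) ensure that $f_i$ sends $\cb_{\al,p}$ into itself and that $Df_j(y_s)\cdot f_i(y_s)$ also lies in this space. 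The main obstacle, as stressed in paragraph (d), is that Taylor's formula naturally produces $\der$-increments whereas the rough analysis is carried out in terms of $\hd$: one must switch between the two coboundaries by repeatedly invoking the bound $\norm{a_{ts}\vp}_{\cb_p}\leq c|t-s|^\al\norm{\vp}_{\cb_{\al,p}}$, which exchanges spatial regularity for time regularity, and to track this trade-off carefully enough not to leave the algebra $\cb_{\al,p}$.

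Uniqueness is an immediate consequence of the contraction property. Continuity in $(y_0,x,x^2)$ is obtained by running the same estimates on the difference of two solutions $y, \yti$ driven by $(y_0,x,x^2)$ and $(\yti_0,\tilde x,\tilde x^2)$: since the rough data enter $X^{x,i}$ and $X^{xx,ij}$ linearly, the fixed-point map depends in a Lipschitz way on them, and a standard perturbation of the contraction argument propagates this Lipschitz dependence to the solution. Finally, the local existence time $T$ is dictated by the requirement that the ball be stable and by the contraction condition; both involve powers of the rough norms of $(x,x^2)$ and of $T^{\ga-\epsilon}$ for a small $\epsilon>0$, which is precisely why $\ga>1/3$ is just enough to close the fixed-point argument at the second-order level of the expansion.
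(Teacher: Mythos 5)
Your overall architecture matches the paper's: reformulate the mild equation via the twisted coboundary $\hd$ as the fixed-point problem (\ref{systeme-final}), define the rough integral by dissection and inversion through $\Laha$, build the abstract increments from $(x,x^2)$, and close with a Banach contraction that shrinks $T$. Two structural elements of the paper's argument are, however, either missing or misstated in your sketch, and both are essential in this convolutional setting rather than technical refinements.

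First, the fixed point is not performed in a bare $\hd$-H\"older space $\hcac_1^\kappa(\cb_{\kappa,p})$ but in the smaller space $\Qha_{\kappa,p}^\kappa$ of \emph{controlled processes}: a process $y$ must come equipped with a decomposition $(\hd y)_{ts}=X^{x,i}_{ts}y^{x,i}_s+y^\sharp_{ts}$, with $y^{x,i}$ and $y^\sharp$ in prescribed spaces, and the $\Qha$-norm involves all three components. This is not cosmetic: the rough integral $\cj(\hd x\, f(y))$ is only \emph{defined} when $y$ carries a Gubinelli derivative $y^{x}$ (which enters $X^{xx,ij}(y^{x,j}\cdot f'_i(y))$), so a contraction on a plain H\"older ball is not even well posed. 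You implicitly use the derivative data when you write $Df_j(y_s)\cdot f_i(y_s)$, but you never state the ambient space that makes the map $\Gamma$ act on itself, which is the core of the argument.

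Second, your displayed expansion
\[
X^{x,i}_{ts}(f_i(y_s))+X^{xx,ij}_{ts}(Df_j(y_s)\cdot f_i(y_s))+\Laha(\rha)_{ts}
\]
omits the mixed term $X^{xa,i}_{ts}(y_s,f'_i(y_s))$, and in the rough regime $\ga\le 1/2$ this term \emph{cannot} be absorbed into the $\Laha$-remainder. Indeed $X^{xa,i}\in\cac_2^{\ga+\kappa-n/(2p)}(\cl(\cb_{\kappa,p}\times\cb_p,\cb_p))$ with $\kappa<\ga\le1/2$, so its H\"older exponent is strictly below $1$, while $\Laha$ requires regularity $>1$. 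You mention $X^{xa,i}$ as ``auxiliary'', but it is in fact one of the three explicit leading terms of (\ref{dec-int-rough}), and its presence is precisely what distinguishes the semigroup setting from finite-dimensional rough paths: the exchange of time for space regularity through $a_{ts}$ produces an order-two object that has no analogue in the classical theory. Related to this, the construction of the heat rough path from $(x,x^2)$ in Section \ref{section-appli-mbf-13} actually involves four operators $X^x,X^{ax},X^{xa},X^{xx}$, with $X^{ax}$ (defined by $X^x=X^{ax}+\der x$) needed to formulate the algebraic relation $\hd X^{xa}=X^{xa}(a\otimes\id)+X^x(a\otimes\id)$ and to estimate $f_i(y)^{\sharp,3}$; your sketch never introduces it. The rest of your outline (Taylor expansion in the $\der$-sense, trade-off via $\|a_{ts}\vp\|_{\cb_p}\lesssim|t-s|^\kappa\|\vp\|_{\cb_{\kappa,p}}$, smallness in $T$, Lipschitz dependence on the rough data) is correct and aligned with the paper.
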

Notice that this theorem is directly applicable to the fractional Brownian setting for $H>1/3$. The case of a Hölder coefficient $1/4<\ga\le 1/3$ is also discussed at the end of the article. 

\smallskip

Here is how our paper is structured: Section \ref{section-cadre} is devoted to recall somme basic facts about algebraic integration with respect to a semi-group of operators, taken mainly from~\cite{GT}. Then we deal with the easy case of Young integration at Section \ref{section-cas-young}. This allows to solve equations for a noisy input with any Hölder continuity exponent $\ga>1/2$ (recall that we had to consider $\ga>5/6$ in \cite{GT}), and it should also be mentioned that we obtain a global solution for the RPDE (\ref{equation-forme-faible}) in this case. The first rough case, that is a Hölder continuity exponent $\ga\in(1/3,1/2]$, is handled at Section \ref{section-cas-rough}. Observe that the abstract results obtained there are expressed in terms of  the operators $X^x,X^{xa}$ and $X^{xx}$ defined at equation (\ref{op-intro-1}) and (\ref{op-intro-2}). It is also important to notice that only local solutions are obtained in the general case, due to the fact that our nonlinearity cannot be considered as a bounded function on the Sobolev spaces $\cb_{\al,p}$. We will show however at Section \ref{sec:glob-sol} that considering a smoothed version of the nonlinearity, a global solution to equation (\ref{equation-forme-faible}) can be constructed. Section \ref{section-appli-mbf-13} is then devoted to the translation of these results in terms of $x^1$ and $x^2$ by a simple integration by parts argument, and thus to the application of the abstract results to concrete situations. Finally, we discuss in Section \ref{section-rough-case-2} the rougher case of a Hölder continuity exponent of the noise $x$ satisfying $1/4<\ga\le 1/3$.

\section{Algebraic integration associated to the heat semigroup}\label{section-cadre}

This first section aims at introducing the framework of our study, as well as the different tools evoked in the introduction. The main point here is the definition and the basic properties of the infinite-dimensional coboundary operator $\delha$ already alluded to in the introduction, together with its inverse $\Laha$. At first, we will recall some elementary properties of the heat semigroup, which will actually be used in the construction of $\Laha$ (Theorem \ref{existence-laha}). 
\subsection{Framework}

We will focus on the case of the heat equation on $L^p(\R^n)$, for some integrer $p$ that will be precised during the study. We denote by $\Delta=\Delta_p$ the Laplacian operator, considered on the (classical) Sobolev space $W^{2,p}(\R^n)$, and by $S_t$ the associated heat semigroup, which is also defined by the convolution

\begin{equation}\label{semigroupe-chaleur}
S_t \varphi=g_t \ast \varphi \quad , \quad \text{with} \ g_t(\xi)=\frac{1}{(2\pi t)^{n/2}} e^{-|\xi|^2/2t}.
\end{equation}

As explained at point \textbf{(d)} of the introduction, the interplay between the linear and the non-lienar part of the equation invites us to let the fractional Sobolev spaces come into the picture:

\begin{notation}
For any $\al >0$, for any $p\in \N^\ast$, we will denote by $\cb_{\al,p}$ the space $(\id -\Delta)^{-\al}(L^p(\R^n))$, endowed with the norm
$$\norm{\vp}_{\cb_{\al,p}}=\norm{\vp}_{L^p(\R^n)}+\norm{(-\Delta)^{\al}\vp}_{L^p(\R^n)}. $$
Set also $\cb_p=\cb_{0,p}=L^p(\R^n)$ for any $p\in \N^\ast \cup \{ \infty \}$.
\end{notation}

The space $\cb_{\al,p}$ is also refered to as the \textit{Bessel potential} of order $(2\al,p)$. Adams (\cite{Ad}) or Stein (\cite{Ste}) gave a thorough description of those fractional Sobolev spaces. Let us indicate here the two properties that we will resort to in the applications:

\begin{itemize}
\item \textit{Sobolev inclusions}: If $0\leq \mu \leq 2\al-\frac{n}{p}$, then $\cb_{\al,p}$ is continuously included in the space $\cac^{0,\mu}(\R^n)$ of the bounded, $\mu$-Hölder functions.
\item \textit{Algebra}: If $2\al p > n$, then $\cb_{\al,p}$ is a Banach algebra, or in other words $\norm{\varphi \cdot \psi}_{\cb_{\al,p}} \leq \norm{\varphi}_{\cb_{\al,p}} \norm{\psi}_{\cb_{\al,p}}$.
\end{itemize}

\smallskip

The general theory of fractional powers of operators then provides us with sharp estimates for the semigroup $S_t$ (see for instance \cite{paz} or \cite{engel00a}):
\begin{proposition}
Fix a time $T>0$. $S_t$ satisfies the following properties:
\begin{itemize}
\item \textit{Contraction}: For all $t \geq 0$, $\al \geq 0$, $S_t$ is a contraction operator on $\cb_{\al,p}$.
\item \textit{Regularization}: For all $t \in (0,T]$, $\al\geq 0$, $S_t$ sends $\cb_p$ on $\cb_{\al,p}$ and 
\begin{equation} \label{regu-prop-semi}
\norm{S_t \varphi}_{\cb_{\al,p}} \leq c_{\al,T}\, t^{-\al} \norm{\varphi}_{\cb_p}.
\end{equation}
\item \textit{Hölder regularity}. For all $t\in (0,T]$, $\varphi \in \cb_{\al,p}$,
\begin{equation}\label{regu-hold-semi}
\norm{S_t\varphi-\varphi}_{\cb_p} \leq c_{\al,T}\, t^\al \norm{\varphi}_{\cb_{\al,p}}.
\end{equation}
\begin{equation}\label{regu-hold-semi-2}
\norm{\Delta S_t \vp}_{\cb_p} \leq c_{\al,T}\,  t^{-1+\al} \norm{\varphi}_{\cb_{\al,p}}.
\end{equation}
\end{itemize}
\end{proposition}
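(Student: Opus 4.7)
All four statements are classical semigroup estimates which can be found in Pazy~\cite{paz} or Engel--Nagel~\cite{engel00a}; the plan below reassembles them from the explicit heat kernel, so as to keep the exposition self-contained.

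First I would observe that $S_t$ and $(-\Delta)^\al$ are Fourier multipliers, with symbols $e^{-t|\xi|^2}$ and $|\xi|^{2\al}$ respectively, so they commute on the Schwartz class and hence, by density, on each $\cb_{\al,p}$. Combining this with $\norm{g_t}_{L^1(\R^n)}=1$ and Young's convolution inequality gives $\norm{S_t\vp}_{\cb_p}\le \norm{\vp}_{\cb_p}$; applying the same reasoning to $(-\Delta)^\al\vp$ then yields the contraction property on $\cb_{\al,p}$.

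For the regularization bound (\ref{regu-prop-semi}), the scaling identity $g_t(\xi)=t^{-n/2}g_1(\xi/\sqrt t)$ entails after a change of variables that $\norm{(-\Delta)^\al g_t}_{L^1(\R^n)}\le c_\al\,t^{-\al}$ for every $\al\ge 0$; when $\al$ is not an integer, this $L^1$ bound follows from inverse-Fourier-transforming the explicit symbol $|\xi|^{2\al}e^{-t|\xi|^2}$, or equivalently from the subordination formula expressing $(-\Delta)^\al$ as an average of the $\{e^{s\Delta}\}_{s>0}$. A second application of Young's inequality then produces $\norm{(-\Delta)^\al S_t\vp}_{\cb_p}\le c_\al\,t^{-\al}\norm{\vp}_{\cb_p}$, and adding the contraction estimate on $\norm{S_t\vp}_{\cb_p}$ yields (\ref{regu-prop-semi}).

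The bound (\ref{regu-hold-semi-2}) is then obtained by writing, via the commutation above, $\Delta S_t\vp=-(-\Delta)^{1-\al}S_t(-\Delta)^\al\vp$ and applying (\ref{regu-prop-semi}) with exponent $1-\al$ to the $\cb_p$-element $(-\Delta)^\al\vp$. Finally, (\ref{regu-hold-semi}) follows from the fundamental theorem of calculus: since $s\mapsto S_s\vp$ is differentiable on $(0,T]$ with derivative $\Delta S_s\vp$, integrating (\ref{regu-hold-semi-2}) gives
\[
\norm{S_t\vp-\vp}_{\cb_p}\le \int_0^t \norm{\Delta S_s\vp}_{\cb_p}\,ds\le c_{\al,T}\norm{\vp}_{\cb_{\al,p}}\int_0^t s^{-1+\al}\,ds = \frac{c_{\al,T}}{\al}\,t^\al\,\norm{\vp}_{\cb_{\al,p}},
\]
which is the desired inequality.

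The only mildly delicate step in this programme is the $L^1$ bound on $(-\Delta)^\al g_t$ for non-integer $\al$; this is nevertheless a standard consequence of the analytic semigroup structure of $(S_t)$ on $\cb_p$, and a direct Fourier-analytic verification makes it quantitative at negligible cost. Everything else reduces to Young's inequality, the commutation of $S_t$ with $(-\Delta)^\al$, and the fundamental theorem of calculus.
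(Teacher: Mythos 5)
Your proof is essentially correct, but it is genuinely different in spirit from the paper, which offers no argument at all: the proposition is stated there as a known consequence of the general theory of analytic semigroups and fractional powers of their generators, with a citation to Pazy \cite{paz} and Engel--Nagel \cite{engel00a}. Your route instead reassembles the estimates directly from the heat kernel: commutation of the Fourier multipliers $S_t$ and $(-\Delta)^\al$, Young's inequality with $\norm{g_t}_{L^1}=1$ for the contraction, the scaling $g_t(\xi)=t^{-n/2}g_1(\xi/\sqrt t)$ giving $\norm{(-\Delta)^\al g_t}_{L^1}=t^{-\al}\norm{(-\Delta)^\al g_1}_{L^1}$ for (\ref{regu-prop-semi}), the factorization $\Delta S_t\vp=-(-\Delta)^{1-\al}S_t(-\Delta)^\al\vp$ for (\ref{regu-hold-semi-2}), and the fundamental theorem of calculus for (\ref{regu-hold-semi}). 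This buys self-containedness and explicit constants, and you correctly isolate the one real technical point, namely $(-\Delta)^\al g_1\in L^1$ for non-integer $\al$ (true: the symbol $|\xi|^{2\al}e^{-|\xi|^2}$ is integrable and its only lack of smoothness is the $|\xi|^{2\al}$ singularity at the origin, which produces decay of order $|x|^{-n-2\al}$ in the inverse transform). Three minor caveats: the parenthetical appeal to a ``subordination formula'' for $(-\Delta)^\al$ is misstated -- subordination represents negative powers or the fractional semigroups; for positive powers you want Balakrishnan's formula $(-\Delta)^\al=c_\al\int_0^\infty s^{-1-\al}(\id-S_s)\,ds$ -- though your primary Fourier argument does not need it; the factorization step for (\ref{regu-hold-semi-2}) implicitly requires $\al\le 1$, which is consistent with the range $\al\in[0,1/2)$ actually used in the paper but should be said; and the final constant $c_{\al,T}/\al$ degenerates as $\al\to 0$, where (\ref{regu-hold-semi}) is anyway trivial from the contraction property, so that case should be split off.
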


At some point of our study, the interpretation of the integral $\int_s^t S_{tu} \, dx^i_u \, f_i(y_u)$ will require a Taylor expansion of the (regular) function $f_i$. As a result, pointwise multiplications of elements of $\cb_p$ are to appear, giving birth to elements of $\cb_{p/k}$, $k\in \lcl 1, \ldots, p \rcl$. In order to go back to the base space $\cb_p$, we shall resort to the following additional properties of $S_t$, which accounts for our use of the spaces $\cb_p$ ($p\geq 2$) instead of the classical Hilbert space $\cb_2$:

\begin{proposition}
For all $t>0$, $k\in \lcl 1,\ldots,p \rcl$, $\vp \in \cb_{p/k}$, one has
\begin{equation}\label{p/k-p}
\norm{S_t \vp }_{\cb_p} \leq c_{k,n} t^{-\frac{n(k-1)}{2p} } \norm{\vp }_{\cb_{p/k}},
\end{equation}
\begin{equation}
\norm{AS_t \vp }_{\cb_p} \leq c_{k,n} t^{-1-\frac{n(k-1)}{2p} } \norm{\vp }_{\cb_{p/k}}.
\end{equation}
\end{proposition}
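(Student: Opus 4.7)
The statement consists of two convolution-type bounds for the heat semigroup $S_t$ acting between $\cb_{p/k}$ and $\cb_p$. Since $S_t \vp = g_t \ast \vp$ with $g_t$ the Gaussian kernel (\ref{semigroupe-chaleur}), the plan is to reduce both estimates to Young's convolution inequality combined with an explicit computation of the $L^q$-norm of $g_t$ (and of $\Delta g_t$), taking advantage of Gaussian scaling.

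For the first inequality (\ref{p/k-p}), the plan is as follows. Choose the exponent $q \in [1,\infty]$ so that
\[
\frac{1}{q} + \frac{k}{p} = 1 + \frac{1}{p}, \qquad \text{i.e.,} \qquad \frac{1}{q} = 1 - \frac{k-1}{p}.
\]
Note that $k \in \{1,\ldots,p\}$ ensures $q \in [1,\infty]$. Young's inequality then gives
\[
\norm{S_t \vp}_{\cb_p} = \norm{g_t \ast \vp}_{\cb_p} \leq \norm{g_t}_{L^q(\R^n)} \norm{\vp}_{\cb_{p/k}}.
\]
Then I would compute $\norm{g_t}_{L^q}$ by Gaussian scaling: writing $g_t(\xi) = t^{-n/2} g_1(\xi/\sqrt{t})$ and changing variables yields $\norm{g_t}_{L^q} = t^{-n(1-1/q)/2} \norm{g_1}_{L^q}$, and substituting $1 - 1/q = (k-1)/p$ produces the required prefactor $t^{-n(k-1)/(2p)}$.

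For the second inequality, there are two equivalent routes. The direct route is to note that $A S_t \vp = (\Delta g_t) \ast \vp$ and to repeat the argument above with $\Delta g_t$ in place of $g_t$: the same Gaussian scaling gives $\norm{\Delta g_t}_{L^q} = t^{-1-n(1-1/q)/2} \norm{\Delta g_1}_{L^q}$, which produces the extra factor $t^{-1}$. Alternatively, one can factorize $S_t = S_{t/2} \circ S_{t/2}$ and combine (\ref{p/k-p}) applied to $S_{t/2}$ from $\cb_{p/k}$ to $\cb_p$ with the $\alpha = 0$ case of (\ref{regu-hold-semi-2}), namely $\norm{A S_{t/2} \psi}_{\cb_p} \leq c \, t^{-1} \norm{\psi}_{\cb_p}$, to get
\[
\norm{A S_t \vp}_{\cb_p} \leq c\, t^{-1} \norm{S_{t/2}\vp}_{\cb_p} \leq c' \, t^{-1} t^{-n(k-1)/(2p)} \norm{\vp}_{\cb_{p/k}}.
\]

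There is no real obstacle: the whole argument is an application of Young's inequality together with the explicit self-similar form of the Gaussian heat kernel. The only thing to keep track of is the bookkeeping of exponents, i.e., verifying that with the choice $1/q = 1 - (k-1)/p$ the $L^q$-norm of $g_t$ (respectively $\Delta g_t$) scales exactly as $t^{-n(k-1)/(2p)}$ (respectively $t^{-1-n(k-1)/(2p)}$), which is a one-line substitution.
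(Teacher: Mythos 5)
Your proposal is correct and follows essentially the same route as the paper: the paper also writes $S_t\vp = g_t\ast\vp$, applies the convolution inequality with the exponent $q=p/(p-k+1)$ (attributing it to Riesz--Thorin), and bounds $\norm{g_t}_{L^q}$ by Gaussian scaling, treating $AS_t\vp=\partial_t g_t\ast\vp$ in the same way, which matches your direct route since $\partial_t g_t=\Delta g_t$. Your explicit scaling computation and the alternative factorization $S_t=S_{t/2}S_{t/2}$ are just slightly more detailed versions of the same argument.
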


\begin{proof}
Those are direct consequences of the Riesz-Thorin theorem. Indeed, for any $\vp \in \cb_{p/k}$,
$$\norm{S_t \vp }_{\cb_p} \leq \norm{g_t \ast \vp}_{\cb_p} \leq \norm{g_t}_{\cb_{p/(p-k+1)}} \norm{\vp }_{\cb_{p/k}} \leq c_{k,n} t^{-\frac{n(k-1)}{2p} } \norm{\vp}_{\cb_{p/k}}.$$
The second inequality can be proved in the same way, since $AS_t \vp=\lp \frac{dS_t}{dt} \rp \vp=\partial_t g_t \ast \vp$.
\end{proof}

Let us finally point out the following result of Strichartz \cite{stri}, which will be at the core of our fixed-point argument through Proposition \ref{prop-stri} (see also \cite{hir} for more general results):

\begin{proposition}\label{result-stri}
For all $\al \in (0,1/2)$, for all $p>1$, set
$$T_\al f(\xi)=\lp \int_0^1 r^{-1-4\al} \lc \int_{\lln \eta \rrn \leq 1} \lln f(\xi+r\eta)-f(\xi) \rrn \, d\eta \rc^2 \, dr \rp^{1/2}.$$
Then $f\in \cb_{\al,p}$ if and only if $f\in \cb_p$ and $T_{\al}f \in \cb_p$, and
$$\norm{f}_{\cb_{\al,p}} \sim \norm{f}_{\cb_p}+\norm{T_{\al}f}_{\cb_p}.$$
\end{proposition}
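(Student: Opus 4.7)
This is a classical result of Strichartz; my plan is to identify $T_\al f$ with a Triebel--Lizorkin-type square function and thereby reduce the equivalence to the known coincidence of $F^{2\al}_{p,2}(\R^n)$ with the Bessel potential space $\cb_{\al,p}=H^{2\al,p}(\R^n)$, which is valid precisely for $2\al \in (0,1)$ and $p>1$ --- exactly our hypotheses. As a preliminary reduction, note that the truncation of the outer integral to $r\in(0,1)$ is harmless: extending the range to $r\in(0,\infty)$ costs only a term controlled by $\norm{f}_{\cb_p}$, thanks to the crude bound $|f(\xi+r\eta)-f(\xi)|\leq|f(\xi+r\eta)|+|f(\xi)|$, the integrability of $r^{-1-4\al}$ near $+\infty$, and the translation-invariance of the $\cb_p$-norm.

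For the easy direction $\norm{T_\al f}_{\cb_p}\lesssim\norm{f}_{\cb_{\al,p}}$, I would write $f=G_{2\al}\ast g$ with $g:=(I-\Delta)^\al f\in\cb_p$ and $G_{2\al}$ the Bessel kernel, so that
\[
f(\xi+r\eta)-f(\xi)=\int_{\R^n}\bigl[G_{2\al}(\xi+r\eta-z)-G_{2\al}(\xi-z)\bigr]g(z)\,dz.
\]
The resulting map $g\mapsto T_\al f$ can be viewed as a vector-valued operator with values in $L^2((0,1),r^{-1-4\al}dr)$, and the standard near-origin estimates on the Bessel kernel ($G_{2\al}(y)\sim|y|^{2\al-n}$ and $|\nabla G_{2\al}(y)|\sim|y|^{2\al-n-1}$), combined with the exponential decay of $G_{2\al}$ at infinity, make the associated kernel fit the Calder\'on--Zygmund framework, yielding $L^p$-boundedness for all $p>1$.

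The converse direction $\norm{f}_{\cb_{\al,p}}\lesssim\norm{f}_{\cb_p}+\norm{T_\al f}_{\cb_p}$ is the core of the statement. Starting from the hypersingular integral representation
\[
(-\Delta)^\al f(\xi)=c_{n,\al}\int_{\R^n}\frac{f(\xi)-f(\xi+y)}{|y|^{n+2\al}}\,dy,
\]
decompose $y=r\eta$ with $|\eta|\leq 1$ and $r\in(0,1)$, plus a tail $|y|\geq 1$. On the near field one pulls absolute values inside the $y$-integral, then applies Cauchy--Schwarz in $r$ against the weight $r^{-1-4\al}dr$ (integrable near $0$ precisely because $\al<1/2$), which produces the pointwise estimate $|(-\Delta)^\al f(\xi)|\leq c\,T_\al f(\xi)+c\,Mf(\xi)$, where $Mf$ is a Hardy--Littlewood-type maximal function absorbing the $|y|\geq 1$ contribution. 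Taking $\cb_p$-norms and invoking the $L^p$-boundedness of $M$ for $p>1$ concludes the argument.

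The main obstacle is the nonlinearity of $T_\al$: the absolute value inside the inner $\eta$-integral rules out any direct Plancherel or Mikhlin multiplier approach, so one cannot simply compare Fourier multipliers. The hypersingular representation above is what rescues the converse direction, precisely because the $L^1$-average with absolute values dominates the signed average appearing in that representation. In the present paper, the cleanest course of action is simply to invoke \cite{stri} and use Proposition \ref{result-stri} as a black box when later estimating $\cb_{\al,p}$-norms via the square function $T_\al$.
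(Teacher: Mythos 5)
The paper itself offers no proof of this proposition; it is stated as a result of Strichartz and used as a black box, exactly as you observe in your closing sentence. So there is nothing in the paper to compare your argument against, and your recommendation to cite \cite{stri} is precisely what the authors do. Your preliminary reduction (harmlessness of truncating the outer integral to $(0,1)$) and your treatment of the easy direction via a vector-valued Calder\'on--Zygmund argument are both standard and sound, and your high-level identification of $T_\al$ with the first-difference characterization of the Triebel--Lizorkin space $F^{2\al}_{p,2}=H^{2\al,p}$ (valid for $0<2\al<1$, $1<p<\infty$) is the right framework.

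The converse direction as you sketch it, however, contains a genuine gap. After the standard Fubini manoeuvre, the near-field part of the hypersingular integral becomes $\int_0^1 r^{-1-2\al} D_r(\xi)\,dr$ with $D_r(\xi)=\int_{|\eta|\le 1}|f(\xi+r\eta)-f(\xi)|\,d\eta$, and you claim that Cauchy--Schwarz against the weight $r^{-1-4\al}\,dr$ turns this into $c\,T_\al f(\xi)$. But the correct splitting is
$$\int_0^1 r^{-1-2\al}D_r\,dr=\int_0^1\bigl(r^{-1/2-2\al}D_r\bigr)\cdot r^{-1/2}\,dr\le T_\al f(\xi)\cdot\Bigl(\int_0^1 r^{-1}\,dr\Bigr)^{1/2},$$
and the companion factor diverges. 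There is no choice of exponents for which Cauchy--Schwarz converts an $L^1(dr/r)$ bound into the $L^2(dr/r)$ quantity $T_\al f$, because $\ell^1\not\subset\ell^2$ after dyadic decomposition; one can easily cook up $D_r$ profiles (e.g.\ $D_{2^{-j}}\sim 2^{-2j\al}/j$) for which the left side diverges while $T_\al f(\xi)$ is finite. So the advertised pointwise estimate $|(-\Delta)^\al f(\xi)|\lesssim T_\al f(\xi)+Mf(\xi)$ is false, and the converse direction cannot be established this way. The parenthetical ``$r^{-1-4\al}\,dr$ integrable near $0$ precisely because $\al<1/2$'' is also incorrect: $r^{-1-4\al}$ is never integrable near $0$ for $\al\ge0$; the role of $\al<1/2$ is rather to make $T_\al f$ finite for smooth $f$ (i.e.\ to allow a first-order difference characterization, $2\al<1$). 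The converse direction is genuinely the hard part of Strichartz's theorem and proceeds at the level of $L^p$ norms --- for instance by bounding the Littlewood--Paley square function $\bigl(\sum_j 2^{4j\al}|\Delta_j f|^2\bigr)^{1/2}$ pointwise by $T_\al f$ plus low-frequency terms and then invoking the $L^p$-equivalence of that square function with $\|(-\Delta)^\al f\|_{L^p}$ --- not by a pointwise domination of $(-\Delta)^\al f$ itself.
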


\subsection{The twisted coboundary $\mathbf{\hat\delta}$}
Notice that we shall work on $n^{\text{th}}$ dimensional simplexes of $\ott$, which will be denoted by
$$
\cs_T^n=\lcl  (s_1,\ldots,s_n)\in\ott^n ; \, s_1\le s_2\le \cdots s_n\rcl.
$$
We will also set $\cac_n=\cac_n(\cs_T^n,V)$ for the continuous $n$-variables functions from $\cs_T^n$ to $V$, for a given vector space $V$. Observe that we work on those simplexes just because the operator $S_{t-u}$ is defined for $t\ge u$ (i.e. on $\cs_T^2$) only.

\smallskip

Let us recall now two basic notations of usual algebraic integration, as explained in \cite{rough} and also recalled in \cite{GT}: we define first an coboundary operator, denoted by $\der$, which acts on the set $\cac_n=\cac_n(\cs_T^n,V)$ of the continuous $n$-variables functions according to the formula:
\begin{equation}\label{defi-delta}
\der: \cac_n \to \cac_{n+1} \quad , \quad (\der g)_{t_1 \ldots t_{n+1}}=\sum_{i=1}^{n+1}(-1)^i g_{t_1 \ldots \hat{t_i} \ldots t_n}
\end{equation}
where the notation $\hat{t_i}$ means that this particular index is omitted. In this definition, $V$ stands for any vector space. Next, a convention for products of elements of $\cac_n$ is needed, and it is recalled in the following notation:
\begin{notation}\label{convention-indices}
If $g\in \cac_n(\cl(V,W))$ and $h\in \cac_m(W)$, then the product $gh \in \cac_{m+n-1}(W)$ is defined by the formula
$$(gh)_{t_1 \ldots t_{m+n-1}}=g_{t_1 \ldots t_n}h_{t_n \ldots t_{n+m-1}}.$$
\end{notation}

In point \textbf{(b)} of the introduction, we (briefly) explains why the standard increment $\der$ was not really well-suited to the study of (\ref{equation-forme-faible}). We will rather use a twisted version of $\der$, denoted by $\delha$, and defined by:
\begin{definition}
For any $n\in \N^\ast,y\in \cac_n(\cb_{\al,p})$, for all $t_1\leq \ldots \leq t_{n+1}$,
\begin{equation}
(\delha y)_{t_{n+1}\ldots t_1}=(\der y)_{t_{n+1}\ldots t_1}-a_{t_{n+1}t_n} y_{t_n\ldots t_1}, \quad \text{with} \ a_{ts}=S_{t-s}-\id \ \text{si} \ s\leq t.
\end{equation}
\end{definition} 

The operator $a:(t,s) \mapsto a_{ts}$ is only defined on the simplex $\{t \geq s \}$. As a consequence, we will have to pay attention to the decreasing order of the time variables throughout our  calculations below. Note that we will often resort to the notation $S_{ts}$ for $S_{t-s}$, so as to get a consistent notational convention for the indexes.

\smallskip

The rest of this subsection is devoted to the inventory of some of those results. The associated proofs can be found in \cite{GT}.

\smallskip 

Let us start with the fundamental property:

\begin{proposition}
The operator $\delha$ satisfies the cohomolgical relation $\delha \delha =0$. Besides, $\text{Ker} \, \delha_{|\cac_{n+1}(\cb_{\al,p})}=\text{Im} \, \delha_{|\cac_{n}(\cb_{\al,p})}$.
\end{proposition}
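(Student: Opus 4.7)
The plan is to prove the two statements of the proposition separately, with the whole argument resting on a single algebraic input coming from the semigroup property of $S$.

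The key preliminary identity is a cocycle-like relation for the family $\{a_{ts}\}_{s\le t}$. From $S_{t-s}=S_{t-u}S_{u-s}$ for $s\le u\le t$ one immediately gets $(\id+a_{ts})=(\id+a_{tu})(\id+a_{us})$, i.e.
\[
a_{ts}=a_{tu}+a_{us}+a_{tu}a_{us}.
\]
Rewritten in terms of the classical coboundary via (\ref{defi-delta}) and the product convention of Notation \ref{convention-indices}, this amounts to the identity $\delta a=a\cdot a$, since $(\delta a)_{v_1 v_2 v_3}=a_{v_1 v_3}-a_{v_1 v_2}-a_{v_2 v_3}$ and $(a\cdot a)_{v_1 v_2 v_3}=a_{v_1 v_2}a_{v_2 v_3}$. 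This is the only analytical input of the proof.

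To establish $\hat\delta\hat\delta=0$, I would combine the above with the Leibniz rule for the classical $\delta$, namely $\delta(gh)=(\delta g)h+(-1)^{n_g-1}g(\delta h)$ for $g\in\cac_{n_g}$, $h\in\cac_{n_h}$. Applied to $g=a\in\cac_2$, $h=y\in\cac_n$ and using $\delta a=a\cdot a$, this gives
\[
\delta(ay)=(\delta a)y-a(\delta y)=(a\cdot a)y-a(\delta y).
\]
Setting $z:=\hat\delta y=\delta y-ay\in\cac_{n+1}$, I would then compute directly
\begin{align*}
\hat\delta z
&=\delta z-a\cdot z\\
&=\delta\delta y-\delta(ay)-a(\delta y)+a(ay)\\
&=0-\bigl[(a\cdot a)y-a(\delta y)\bigr]-a(\delta y)+(a\cdot a)y\;=\;0,
\end{align*}
where I used $\delta\delta=0$ together with the associativity $a(ay)=(a\cdot a)y$ of the product of Notation \ref{convention-indices}.

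For the acyclicity statement, the inclusion $\mathrm{Im}\,\hat\delta_{|\cac_n}\subseteq\ker\hat\delta_{|\cac_{n+1}}$ is exactly what was just proved. For the converse, given $z\in\cac_{n+1}(\cb_{\al,p})$ with $\hat\delta z=0$, I would construct an explicit primitive by fixing the smallest time variable to the reference point $0$, in the spirit of the classical Poincar{\'e}-lemma argument for $\delta$. In the base case $n=1$, the identity $\hat\delta z=0$ evaluated on $(v_1,v_2,0)$ reads
\[
z_{v_1,0}=z_{v_1 v_2}+S_{v_1-v_2}\,z_{v_2,0},\qquad v_1\geq v_2\geq 0,
\]
so that $y_v:=z_{v,0}$ gives $(\hat\delta y)_{v_1 v_2}=z_{v_1,0}-S_{v_1-v_2}z_{v_2,0}=z_{v_1 v_2}$. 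For general $n\ge 1$, an analogous primitive $y_{v_1\ldots v_n}:=(-1)^{n+1}z_{v_1\ldots v_n,0}$ works: expanding $(\hat\delta y)_{v_1\ldots v_{n+1}}$ and substituting the cocycle relation coming from $\hat\delta z=0$ reduces everything to the classical identity witnessing the acyclicity of $\delta$.

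The main obstacle is, frankly, bookkeeping rather than mathematics: the real content is the one-line identity $\delta a=a\cdot a$, which is essentially the semigroup property written in cohomological language. The only subtlety is to keep track of the position of the $a$-factor in the products, since Notation \ref{convention-indices} places it always on the leftmost pair of times; this is precisely what makes the cancellations in $\hat\delta\hat\delta y$ compatible with the Leibniz rule, and what forces the primitive in the acyclicity step to be built by fixing the \emph{smallest} time variable.
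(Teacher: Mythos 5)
Your argument is correct. The paper itself defers the proof of this proposition to its earlier reference \cite{GT}, so there is no in-text proof to compare against word for word, but your route is the standard one and is essentially what \cite{GT} does: package the semigroup law $S_{t-u}S_{u-s}=S_{t-s}$ as the ``Maurer--Cartan'' identity $\delta a=a\cdot a$, combine it with the classical Leibniz rule $\delta(ay)=(\delta a)y-a(\delta y)$, and observe that in $\hat\delta\hat\delta y=\delta\delta y-\delta(ay)-a\delta y+a(ay)$ everything cancels. Both parts of your computation check out (the signs in $(\delta a)_{v_1v_2v_3}=a_{v_1v_3}-a_{v_1v_2}-a_{v_2v_3}$ are consistent with \eqref{defi-delta} under the paper's decreasing-time convention, and the product $a(ay)=(aa)y$ is indeed associative under Notation \ref{convention-indices}), as does the primitive $y_{v_1\ldots v_n}=(-1)^{n+1}z_{v_1\ldots v_n,0}$ for the exactness step, where evaluating $\hat\delta z=0$ with the last slot frozen at $0$ recovers $\hat\delta y=z$ by exactly the cancellation you describe. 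Your closing remark is also the right one to make: because $a$ sits on the two \emph{largest} time indices, the homotopy must be built by appending the \emph{smallest} time, i.e.\ $0$, and the Leibniz sign $(-1)^{n_g-1}$ with $n_g=2$ is what makes the $a\,\delta y$ terms cancel.
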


Now, let us turn to a more trivial result, which will be exploited in the sequel. Remember that we use the notational convention \ref{convention-indices} for time variables.  

\begin{proposition}
If $L\in \cac_{n-1}(V)$ and $M\in \cac_2(\cl(V))$, then
\begin{equation}\label{relation-alg-m-l}
\delha (M L)=(\delha M)L-M(\der L).
\end{equation}
\end{proposition}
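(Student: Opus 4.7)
The proof plan is to reduce the identity for $\hat\delta$ to the analogous Leibniz-type identity for the ordinary coboundary $\delta$, since the difference $\hat\delta - \delta$ is just multiplication by $a_{ts}$ on the leading index pair, and this correction only sees the $M$-factor of the product $ML$.

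First I would verify the auxiliary identity $\delta(ML)=(\delta M)L-M(\delta L)$ by a direct computation from the definition \eqref{defi-delta} and the chaining convention of Notation \ref{convention-indices}. Writing (using the decreasing-index convention) $(ML)_{t_1\ldots t_n}=M_{t_1t_2}L_{t_2\ldots t_n}$, the sum defining $\delta(ML)_{t_1\ldots t_{n+1}}$ splits according to whether the omitted index is among $\{t_1,t_2,t_3\}$ or among $\{t_3,\ldots,t_{n+1}\}$. Omitting $t_1$ or $t_2$ changes only the $M$-block and leaves $L_{t_3\ldots t_{n+1}}$ intact; omitting $t_i$ for $i\ge 3$ leaves $M_{t_1t_2}$ intact and removes $t_i$ from $L$. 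After a careful sign reshuffling (the index $t_3$ appears both in the $M$-part and as the first argument of $L_{t_3\ldots t_{n+1}}$, which is where the two halves are glued), one checks that the first group reconstructs $(\delta M)_{t_1t_2t_3} L_{t_3\ldots t_{n+1}}=((\delta M)L)_{t_1\ldots t_{n+1}}$, while the second group equals $-M_{t_1t_2}(\delta L)_{t_2\ldots t_{n+1}}=-(M(\delta L))_{t_1\ldots t_{n+1}}$.

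Next I would plug this Leibniz identity into the definition of $\hat\delta$. By construction,
\begin{equation*}
(\hat\delta(ML))_{t_1\ldots t_{n+1}} = \delta(ML)_{t_1\ldots t_{n+1}} - a_{t_1t_2}(ML)_{t_2\ldots t_{n+1}}.
\end{equation*}
The crucial observation is that the correction attaches entirely to $M$: by the chaining convention,
\begin{equation*}
a_{t_1t_2}(ML)_{t_2\ldots t_{n+1}} = a_{t_1t_2}M_{t_2t_3}L_{t_3\ldots t_{n+1}} = \bigl(a_{t_1t_2}M_{t_2t_3}\bigr)L_{t_3\ldots t_{n+1}},
\end{equation*}
so this correction combines with $(\delta M)L$ to yield $(\hat\delta M)L$, while the $M(\delta L)$ term is untouched. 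Collecting everything gives
\begin{equation*}
\hat\delta(ML) = \bigl[(\delta M) - a\, M\bigr]L - M(\delta L) = (\hat\delta M)L - M(\delta L),
\end{equation*}
which is \eqref{relation-alg-m-l}.

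The only real work is in step one — keeping track of the alternating signs in $\delta$ and making sure the "seam" index $t_3$ is handled consistently between the $M$- and $L$-blocks. Once that purely combinatorial Leibniz rule is in place, the passage from $\delta$ to $\hat\delta$ is immediate precisely because $a_{t_1t_2}$ acts on the leftmost two slots, which are exactly the ones occupied by $M$ in the chained product.
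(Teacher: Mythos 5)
Your proof is correct, and it proceeds by exactly the computation one would carry out here: first establish the Leibniz rule $\der(ML)=(\der M)L-M(\der L)$ by a direct index count in \eqref{defi-delta} using Notation \ref{convention-indices}, then observe that the $a_{t_1t_2}$-correction in $\delha$ attaches only to the leading $M$-block, so $\der$ upgrades to $\delha$ on the $M$-factor and \eqref{relation-alg-m-l} follows. The paper itself does not give a proof but defers to \cite{GT}, and the argument there is the same elementary one. One small caution about your description of the split: the first group ($i=1,2$) yields only two of the three terms of $(\der M)_{t_1t_2t_3}L_{t_3\ldots t_{n+1}}$, and the second group ($i\ge 3$) yields only $n-1$ of the $n$ terms of $-M_{t_1t_2}(\der L)_{t_2\ldots t_{n+1}}$; the missing term $-M_{t_1t_2}L_{t_3\ldots t_{n+1}}$ from the first and the extra $+M_{t_1t_2}L_{t_3\ldots t_{n+1}}$ from the second cancel against each other, which is the cross-seam cancellation your parenthetical alludes to and which you should make explicit rather than describe each group as separately equalling $(\der M)L$ and $-M(\der L)$.
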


The following result is the equivalent of Chasles relation in the $\delha$ setting. It is an obvious consequence of the multiplicative property of $S$.

\begin{proposition}
Let $x$ a differentiable process. If $y_{ts}=\int_s^t S_{tu} \, dx_u \, f_u$, then $(\delha y)_{tus}=0$ for all $s\leq u\leq t$.
\end{proposition}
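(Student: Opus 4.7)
My plan is to unfold the definitions and then exploit the semigroup identity $S_{t-r}=S_{t-u}\,S_{u-r}$, which is the only non-trivial ingredient.

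Fix $s\le u\le t$. By the definition~\eqref{defi-delta} applied to the two-variable function $y$, one has
\[
(\der y)_{tus}=y_{ts}-y_{tu}-y_{us},
\]
so that, recalling $(\delha y)_{tus}=(\der y)_{tus}-a_{tu}y_{us}$, it is enough to show
\[
y_{ts}-y_{tu}-y_{us}=a_{tu}\,y_{us}, \qquad\text{i.e.}\qquad y_{ts}-y_{tu}=S_{t-u}\,y_{us}.
\]

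To obtain this, I would split the integral defining $y_{ts}$ at the intermediate time $u$:
\[
y_{ts}=\int_{s}^{t}S_{t-r}\,dx_{r}\,f_{r}
=\int_{s}^{u}S_{t-r}\,dx_{r}\,f_{r}+\int_{u}^{t}S_{t-r}\,dx_{r}\,f_{r}
=\int_{s}^{u}S_{t-r}\,dx_{r}\,f_{r}+y_{tu}.
\]
On the first summand, for every $r\in[s,u]$ the semigroup additivity yields $S_{t-r}=S_{(t-u)+(u-r)}=S_{t-u}\,S_{u-r}$, and pulling the bounded operator $S_{t-u}$ outside the (classical) Riemann--Stieltjes integral (which is legitimate since $x$ is assumed differentiable) gives
\[
\int_{s}^{u}S_{t-r}\,dx_{r}\,f_{r}=S_{t-u}\int_{s}^{u}S_{u-r}\,dx_{r}\,f_{r}=S_{t-u}\,y_{us}.
\]
Combining the two displays produces $y_{ts}-y_{tu}=S_{t-u}\,y_{us}$, which is the required identity, and hence $(\delha y)_{tus}=0$.

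There is no real obstacle here: the result is a direct algebraic consequence of the semigroup law together with the linearity of the integral, and the differentiability of $x$ guarantees that every integral above makes sense in the classical Riemann--Stieltjes sense so that the manipulations are legal. Conceptually the identity encodes the fact that $\delha$ is precisely the twist of $\der$ designed so that objects of the form $\int_{\cdot}^{\cdot} S_{\cdot-u}\,dx_{u}\,f_{u}$ are ``$\delha$-closed'', mirroring Chasles' relation for ordinary integrals.
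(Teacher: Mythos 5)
Your proof is correct and uses exactly the mechanism the paper has in mind: split the integral at the intermediate time $u$ and use the semigroup law $S_{t-r}=S_{t-u}S_{u-r}$ to factor $S_{tu}$ out of the piece over $[s,u]$, which is precisely what the paper does in the computation leading to~\eqref{delha-r} and what it refers to as the ``multiplicative property of $S$''. The paper itself defers the proof to~\cite{GT}, so no comparison beyond this is possible, but your argument is the standard one and there is nothing missing.
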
 

From an analytical point of view, the notion of Hölder-regularity of a process should be adapted to this context, and thus, we define, for any $\al\in [0,1/2)$, $p\in \N^\ast$, $\ka\in (0,1)$,
\begin{equation}
\cacha_1^\ka(\cb_{\al,p}):=\{ y \in \cac_1(\cb_{\al,p}): \ \sup_{s<t} \frac{\cn[(\delha y)_{ts};\cb_{\al,p}]}{\lln t-s \rrn^\ka} < \infty \}.
\end{equation} 

Let us take profit of this subsection to introduce the Hölder spaces commonly used in the $k$-increment theory. They are the subspaces of $\cac_1(V)$, $\cac_2(V)$ and $\cac_3(V)$ respectively induced by the norms ($V$ stands for any normed vector space):
$$\cn[y;\cac_1^\ka(V)]:= \sup_{s<t} \frac{\cn[(\der y)_{ts};V]}{\lln t-s \rrn^\ka},\quad \cn[y;\cac_2^\ka(V)]:=\sup_{s<t} \frac{\cn[y_{ts};V]}{\lln t-s \rrn^\ka},$$
$$\cn[y;\cac_3^{\ka,\rho}(V)]:=\sup_{s<u<t} \frac{\cn[y_{tus};V]}{\lln t-u \rrn^\ka \lln u-s \rrn^\rho},$$
$$\cn[y;\cac_3^{\mu}(V)]:=\inf \lcl \sum_i \cn[y^i;\cac_3^{\ka,\mu-\ka}]: \ y =\sum_i y^i \rcl.$$

Now, let us state the main result of this section, which allows to invert the twisted coboundary operator $\hat\der$. 

\begin{theorem}\label{existence-laha}
Fix a time $T>0$, a parameter $\ka \geq 0$ and let $\mu >1$. For any $h\in \cac_3^\mu([0,T];\cb_{\ka,p})\cap \text{Ker} \, \delha_{|\cac_3(\cb_{\ka,p})}$, there exists a unique element $$\Laha h \in \cap_{\al\in [0,\mu)} \cac_2^{\mu-\al}([0,T];\cb_{\ka+\al,p})$$ such that $\delha(\Laha h)=h$. Moreover, $\Laha h$ satisfies the following contraction property: for all $\al\in [0,\mu)$,
\begin{equation}\label{contraction-laha}
\cn[\Laha h;\cac_2^{\mu-\al}([0,T];\cb_{\ka+\al,p})] \leq c_{\al,\mu,T} \, \cn[h;\cac_3^\mu([0,T];\cb_{\ka,p})].
\end{equation}
\end{theorem}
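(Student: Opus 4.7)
The plan is to adapt the classical sewing lemma of Gubinelli so that the twisted coboundary $\delha$ plays the role that $\delta$ does in the standard theory. The algebraic backbone is the cohomological relation $\delha\delha=0$, which ensures that the kernel/image calculus at the heart of the sewing construction carries over unchanged; the presence of the semigroup correction $a_{ts}=S_{t-s}-\id$ is felt only through the quantitative estimates, and it is precisely the regularisation \eqref{regu-prop-semi} together with the H\"older bound \eqref{regu-hold-semi} that make it possible to measure $\Laha h$ in the full scale of Bessel spaces $\cb_{\kappa+\alpha,p}$ for all $\alpha<\mu$.

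For uniqueness I would observe that if $w\in\bigcap_{\alpha\in[0,\mu)}\cac_2^{\mu-\alpha}(\cb_{\kappa+\alpha,p})$ satisfies $\delha w=0$, then $w_{ts}=w_{tu}+S_{t-u}w_{us}$ for all $s\le u\le t$. Iterating this identity along a partition $s=t_0<\cdots<t_n=t$ and invoking the semigroup property produces
$$
w_{ts}=\sum_{i=0}^{n-1} S_{t-t_{i+1}}\,w_{t_{i+1},t_i}.
$$
Taking $\alpha=0$ and using the contraction of $S_t$ on $\cb_{\kappa,p}$ together with the regularity assumption yields $\norm{w_{ts}}_{\cb_{\kappa,p}}\le c\sum_i|t_{i+1}-t_i|^{\mu}\le c|\pi|^{\mu-1}|t-s|$, which tends to $0$ as $|\pi|\to 0$ since $\mu>1$. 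Hence $w\equiv 0$ and $\Laha h$ is uniquely determined.

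For existence I would build $\Laha h$ by dyadic approximation, in the spirit of \cite{GT}. For each $s<t$ and each dyadic subdivision $\pi_n$ of $[s,t]$ I would define a Riemann-type sum $M^n_{ts}$ built from values of $h$ weighted by semigroup factors of the form $S_{t-t^n_i}$, chosen so that identity \eqref{relation-alg-m-l} combined with $\delha h=0$ allows the refinement increment $M^{n+1}_{ts}-M^n_{ts}$ to be rewritten purely in terms of $h$ evaluated at the newly added subdivision points. Bounding each such term in $\cb_{\kappa+\alpha,p}$ via the regularisation \eqref{regu-prop-semi} (which produces a singular factor of the form $(t-t^n_i)^{-\alpha}$) and summing against the $\mu$-H\"older bound on $h$ leads to a geometric estimate of the type
$$
\cn[M^{n+1}-M^n;\cac_2^{\mu-\alpha}(\cb_{\kappa+\alpha,p})] \;\le\; c\,2^{-n(\mu-1)}\,\cn[h;\cac_3^\mu(\cb_{\kappa,p})],
$$
and the series $\sum_n 2^{-n(\mu-1)}$ converges precisely because $\mu>1$. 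This shows that $(M^n)$ is Cauchy in each $\cac_2^{\mu-\alpha}(\cb_{\kappa+\alpha,p})$ with $\alpha<\mu$, yielding both the limit $\Laha h$ and the bound \eqref{contraction-laha}. Applying $\delha$ to $M^n$ and passing to the limit, using \eqref{relation-alg-m-l} once more, then confirms $\delha\Laha h=h$.

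The main obstacle I expect lies in this last refinement estimate. Unlike in the classical sewing, where $M^{n+1}-M^n$ reduces directly to an expression in $\delta h$, here the difference involves commutators between the semigroup $S_{t-\cdot}$ and the increment operation, which cannot be killed by the hypothesis $\delha h=0$ alone. Controlling these commutators in the Bessel scale is exactly what forces the trade-off \emph{$(\mu-\alpha)$-H\"older in time versus $(\kappa+\alpha)$-regularity in space} and restricts the parameter $\alpha$ to the half-open range $[0,\mu)$ in the final statement.
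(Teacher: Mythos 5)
The paper itself does not prove this statement but defers to \cite{GT}, so your sketch has to be judged against the standard twisted-sewing argument rather than a proof printed here. Your uniqueness argument is correct and is indeed the standard one: setting $\alpha=0$, iterating $w_{ts}=w_{tu}+S_{t-u}w_{us}$ yields $w_{ts}=\sum_i S_{t-t_{i+1}}w_{t_{i+1}t_i}$, and contraction of $S$ on $\cb_{\kappa,p}$ together with the $\mu$-H\"older bound forces this to vanish as the mesh shrinks, since $\mu>1$.

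For existence, however, you have misidentified the obstacle. If one takes the semigroup-weighted sums
$M^n_{ts}=\sum_i S_{t-t^n_{i+1}}\,g_{t^n_{i+1}t^n_i}$ for any $g$ with $\delha g=h$, then the refinement increment is \emph{exactly}
\begin{equation*}
M^{n+1}_{ts}-M^n_{ts}=-\sum_i S_{t-t^n_{i+1}}\,h_{t^n_{i+1},\,m^n_i,\,t^n_i},
\end{equation*}
where $m^n_i$ is the added midpoint: the semigroup property $S_{t-m^n_i}=S_{t-t^n_{i+1}}S_{t^n_{i+1}-m^n_i}$ regroups the three terms into a single $\delha g=h$ evaluation, with no residual commutator. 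This is precisely what the algebraic identity~\eqref{relation-alg-m-l}, $\delha(ML)=(\delha M)L-M(\der L)$, encodes; the twisting is absorbed cleanly by the weights, and $\delha h=0$ does kill everything one needs it to kill. So the last paragraph of your proposal, which attributes the difficulty to uncontrolled commutators between $S_{t-\cdot}$ and the increment, is pointing at a phantom; the $\alpha=0$ convergence goes through verbatim as in the classical sewing.

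The genuine subtlety, which your sketch does not touch, is the gain of spatial regularity for $\alpha>0$. When one measures $M^{n+1}-M^n$ in $\cb_{\kappa+\alpha,p}$ via the regularisation~\eqref{regu-prop-semi}, the singular weights $(t-t^n_{i+1})^{-\alpha}$ are summable over the bulk of the partition, but the \emph{last} term of every dyadic level carries $S_{t-t^n_{2^n}}=S_0=\id$ and gains nothing: one is left with a residual series $\sum_m h_{t,\,b_{m+1},\,b_m}$ with $b_m\uparrow t$, for which the hypothesis $h\in\cac_3^\mu(\cb_{\kappa,p})$ controls only the $\cb_{\kappa,p}$ norm. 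Passing from there to a $\cb_{\kappa+\alpha,p}$ bound requires a further pass through the relation $\delha h=0$ (telescoping these residual evaluations so that a genuine semigroup factor reappears in front of each of them), and it is in this step, not in the refinement identity, that the trade-off between $\mu-\alpha$ in time and $\kappa+\alpha$ in space actually arises and the restriction $\alpha<\mu$ is used. Without spelling this out, the proposal establishes the theorem only at the level $\alpha=0$.
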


The link between the operator $\Laha$ and a more classical formulation of the rough integration theory by means of Riemann sums, is given by the following result:

\begin{proposition}\label{lien-laha-int}
Let $g \in \cac_2(\cb_{\ka,p})$ such that $\delha g \in \cac_3^\mu(\cb_{\ka,p})$ with $\mu >1$. Then the increment $\delha f=(\id-\Laha \delha)g \in \cac_2(\cb_{\ka,p})$ satisfies
$$(\delha f)_{ts}=\lim_{|\Pi_{ts}| \to 0} \sum_{(t_k)\in \Pi_{ts}} S_{tt_{k+1}} g_{t_{k+1}t_k} \quad \text{in} \ \cb_{\ka,p},$$
for all $s\leq t$.
\end{proposition}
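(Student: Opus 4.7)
The plan relies on the decomposition $g = \delha f + \Laha h$ with $h := \delha g$, which is built directly into the formula $\delha f = (\id - \Laha \delha) g$. Note that $\Laha h$ is well defined by Theorem \ref{existence-laha}, since $\delha h = \delha \delha g = 0$ by the cohomological relation, and that, by the same relation, $\delha(\delha f) = 0$ as an element of $\cac_3$. Writing $J_\Pi(\cdot) := \sum_{k=0}^{n-1} S_{t, t_{k+1}}(\cdot)_{t_{k+1}, t_k}$ for the Riemann sum associated to $\Pi_{ts} = \{s = t_0 < \cdots < t_n = t\}$, linearity gives $J_\Pi(g) = J_\Pi(\delha f) + J_\Pi(\Laha h)$, and I plan to handle the two pieces separately.

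The first piece collapses to a telescoping identity: evaluating $\delha(\delha f) = 0$ on three points $s \leq u \leq t$ reads $(\delha f)_{ts} = (\delha f)_{tu} + S_{tu}(\delha f)_{us}$. Iterating this along $\Pi_{ts}$ and using the semigroup law $S_{t, t_k} = S_{t, t_{k+1}} S_{t_{k+1}, t_k}$, an immediate induction on $n$ yields
\begin{equation*}
(\delha f)_{ts} = \sum_{k=0}^{n-1} S_{t, t_{k+1}}(\delha f)_{t_{k+1}, t_k} = J_\Pi(\delha f)
\end{equation*}
for every partition $\Pi_{ts}$, so that $J_\Pi(\delha f)$ is in fact constant equal to $(\delha f)_{ts}$ and requires no passage to the limit.

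For the second piece, Theorem \ref{existence-laha} specialised to $\al = 0$ gives $\Laha h \in \cac_2^\mu(\cb_{\ka, p})$, whence $\cn[(\Laha h)_{t_{k+1}, t_k}; \cb_{\ka, p}] \leq C (t_{k+1} - t_k)^\mu$ with $C := \cn[\Laha h; \cac_2^\mu(\cb_{\ka, p})]$. Combining with the contractivity of $S$ on $\cb_{\ka, p}$ I then plan to bound
\begin{equation*}
\cn[J_\Pi(\Laha h); \cb_{\ka, p}] \leq C \sum_{k=0}^{n-1} (t_{k+1} - t_k)^\mu \leq C (t - s)\, |\Pi_{ts}|^{\mu - 1},
\end{equation*}
which tends to $0$ as $|\Pi_{ts}| \to 0$ since $\mu > 1$. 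Summing the two contributions yields $J_\Pi(g) \to (\delha f)_{ts}$ in $\cb_{\ka, p}$. The only non-trivial input is the H\"older regularity of $\Laha h$ supplied by Theorem \ref{existence-laha}; once that is available, the rest is purely algebraic (the telescoping through $\delha\delha = 0$) together with a geometric-sum estimate, so I do not anticipate a genuine technical obstacle.
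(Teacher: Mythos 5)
Your argument is correct and is the canonical proof of this type of statement in the algebraic integration framework (the paper itself defers the proof to~\cite{GT}, but this is the route taken there as well). The decomposition $g = \delha f + \Laha h$ with $h = \delha g$, the observation that $J_\Pi(\delha f) \equiv (\delha f)_{ts}$ by telescoping the three-point relation $(\delha f)_{ts} = (\delha f)_{tu} + S_{tu}(\delha f)_{us}$ through the semigroup law, and the estimate $\cn[J_\Pi(\Laha h);\cb_{\ka,p}] \lesssim (t-s)\,\lvert\Pi\rvert^{\mu-1}$ using contractivity of $S$ and $\Laha h \in \cac_2^\mu(\cb_{\ka,p})$ (the case $\al=0$ of Theorem~\ref{existence-laha}) together give exactly the claimed limit. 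The only cosmetic point: you attribute $\delha(\delha f)=0$ to ``the same relation'' as $\delha\delha g = 0$, but what is really used there is the inversion property $\delha(\Laha h)=h$, which gives $\delha\bigl((\id-\Laha\delha)g\bigr)=\delha g-\delha g=0$; one then invokes $\text{Ker}\,\delha=\text{Im}\,\delha$ to legitimize the notation $\delha f$. This is a phrasing issue, not a gap.
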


\section{Young case} \label{section-cas-young}

The appellation 'Young case' is used in this section order to indicate that only expansions at first order will be involved in this section. Although this kind of considerations has already been explored in \cite{glt} under more general hypotheses concerning the spatial regularity of the noise, we think that it is worthwhile to illustrate in a simple setting the adaptation of the \textit{dissection} method to the convolutional context. We will see in Section \ref{section-appli-mbf-13} that the general result of Theorem \ref{theo-cas-young} can be applied to a noise generated by a (finite-dimensional) $\ga$-Hölder process $x$, with $\ga >1/2$. This is an improvement with respect to \cite{GT}, where the unnatural condition $\ga>5/6$ had to be assumed.

\smallskip

Throughout this section, we fix a parameter $\ga \in (1/2,1)$, which (morally) represents the Hölder regularity of $x$.

\subsection{Interpretation of the integral} The aim here is to give an interpretation of the twisted Young integral $\int_s^t S_{tu} \, dx_u \, z_u$ in terms of $\delta$ and $\hat\Lambda$, and to do so, we shall follow the same reasoning as in \cite{rough,GT}: we assume first that $x$ and $z$ are smooth processes, and obtain a dissection of the integral $\int_s^t S_{tu} \, dx_u \, z_u$ in terms of $\delta$ and $\hat\Lambda$ in this particular case. This allows then to extend the notion of twisted integral to Hölder continuous signals with Hölder continuity coefficient greater than $1/2$.

\smallskip

Thus, assume, at first, that $x$ is real valued and regular (for instance lipschitz, or even differentiable) in time, as well as the integrand $z$, and look at the decomposition
\begin{equation}\label{dec-ele-inf}
\int_s^t S_{tu} \, dx_u \, z_u=\lp \int_s^t S_{tu} \, dx_u \rp z_s+\int_s^t S_{tu} \, dx_u \, (\der z)_{us}.
\end{equation}
Now, if we set $r_{ts}=\int_s^t S_{tv} \, dx_v \, (\der z)_{vs}$, one has
$$(\delha r)_{tus}=\int_s^t S_{tv} \, dx_v \, (\der z)_{vs}-\int_u^t S_{tv} \, dx_v \, (\der z)_{vu}-S_{tu} \int_s^u S_{uv} \, dx_v \, (\der z)_{vs},$$
which, using the fact that $S_{tu} S_{uv}=S_{tv}$, reduces to
\begin{equation}\label{delha-r}
(\delha r)_{tus}=\lp \int_u^t S_{tv} \, dx_v \rp (\der z)_{us}.
\end{equation}

This first elementary step lets already emerge the object which plays the role of the a priori first order increment associated to the heat equation, namely
$$X^{x,i}_{ts}=\int_s^t S_{tv} \, dx^i_v.$$

We are then in position to invert $\delha$ in (\ref{delha-r}) thanks to Theorem \ref{existence-laha}. Indeed, one easily deduces, owing to the regularity of $x$ and $z$,
$$X^x(\der z) \in \cac_3^{2}(\cb_{\al,p}) \quad \text{for some $\al \in [0,1/2)$}.$$
As a result, we get
\begin{equation}\label{int-young-inf}
\int_s^t S_{tu} \, dx_u \, z_u=X^{x,i}_{ts} z^i_s+\Laha_{ts}\lp X^{x,i}\, \der z^i  \rp.
\end{equation}

\smallskip

As in the standard case algebraic integration setting in the Young setting, we now wonder if the right-hand-side of (\ref{int-young-inf}) remains well-defined in a less regular context:
\begin{itemize}
\item \textit{From an analytical point of view}. The regularity assumption of Theorem \ref{existence-laha} imposes the condition: for all $i\in \{1,\ldots,N\}$,
$$\  X^{x,i} \der  z^i \in \cac_3^\mu(\cb_{\al,p}) \quad \text{with $\al \in [0,1/2)$ and $\mu >1$}.$$
Therefore, we shall be led to suppose that $z^i$ is $\ka$-Hölder (in the classical sense), with values in a space $\cb_{\al',p}$ to be precised, or in other words $ z^i\in \cac_1^\ka(\cb_{\al',p})$, and we will also assume that $X^{x,i} \in \cac_2^\ga(\cl(\cb_{\al',p},\cb_{\al,p}))$, with $\ka+\ga >1$. In fact, we will see that changing space is not necessary when we apply $X^{x,i}$, so that it will be sufficient to consider the case $\al=\al'$.
\item \textit{From an algebraic point of view}. We know that $\Laha$ is defined on the spaces $\cac_3^\mu(\cb_{\al,p}) \cap \text{Ker} \ \delha$. This constrains us to assume that $\delha (X^{x,i}\, \der  z^i)=0$, which, by (\ref{relation-alg-m-l}), is satisfied once we admit that $\delha X^{x,i}=0$. 
\end{itemize}

Let us record those two conditions under the abstract hypothesis:

\begin{hypothesis}\label{hypo-x-x-young}
From $x$, one can build processes $X^{x,i}$ ($i\in \lcl 1,\ldots,N\rcl$) of two variables such that, for all $i$:
\begin{itemize}
\item For any $\al \in [0,1/2)$ such that $2\al p >1$, $X^{x,i} \in \cac_2^\ga(\cl(\cb_{\al,p},\cb_{\al,p}))$
\item The algebraic relation $\delha X^{x,i}=0$ is satisfied.
\end{itemize}
\end{hypothesis}

\begin{remark}
Actually, the additional condition $2\al p >1$ could have been skipped in the latter hypothesis. We have notified it so that Hypothesis \ref{hypo-x-x-young} meets the more general Hypothesis \ref{hypo-regu} of Section \ref{section-cas-rough}.
\end{remark}

We are then allowed to use the expression (\ref{int-young-inf}) for irregular integrands:

\begin{proposition}\label{prop-int-young}
Under the assumption (\ref{hypo-x-x-young}), we define, for all processes $z$ such that $ z^i\in \cac_1^0(\cb_{\ka,p}) \cap \cac_1^\ka(\cb_p)$, $i=1,\dots,N$, with $\ka < \ga$ and $\ka+\ga >1$, the integral
\begin{equation}
\cj_{ts}(\hat{d}x \, z)=X^{x,i}_{ts} z^i_s+\Laha_{ts}\lp X^{x,i} \, \der z^i  \rp.
\end{equation}
In that case:
\begin{itemize}
\item $\cj(\hat{d}x \, z)$ is well-defined and there exists an element $\hat{z} \in \cacha_1^\ga(\cb_{\ka,p})$ such that $\delha \hat{z}$ is equal to $\cj(\hat{d}x \, z)$.
\item It holds that
\begin{equation}\label{estimation-cas-young}
\cn[\hat{z};\cacha_1^\ga(\cb_{\ka,p})] \leq c_x \lcl \cn[ z;\cac_1^0(\cb_{\ka,p})]+\cn[z;\cac_1^\ka(\cb_p)] \rcl,
\end{equation}
with
\begin{equation}
c_x \leq c \lcl \cn[X^x;\cac_2^\ga(\cl(\cb_p,\cb_p))]+\cn[X^x;\cac_2^\ga(\cl(\cb_{\ka,p},\cb_{\ka,p})] \rcl
\end{equation}
\item The integral can be written as
\begin{equation}\label{int-young-somme}
\cj_{ts}(\delha x \, z)=\lim_{|\Delta |\to 0} \sum_{(t_k) \in \Delta} S_{tt_{k+1}} X^{x,i}_{t_{k+1}t_k} z^i_{t_k},
\end{equation}
where the limit is taken over partitions $\Delta_{[s,t]}$ of the interval $[s,t]$, as their mesh tends to 0. Hence it coincides with the Young type integral $\int_s^t S_{tu} \, dx_u \, z_u$.
\end{itemize}

\end{proposition}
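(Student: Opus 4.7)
The plan is to verify that $\cj(\hat d x\, z)$ is a legitimate element of $\delha \cac_1(\cb_{\ka,p})$ by checking the two requirements imposed by Theorem \ref{existence-laha} for the second summand, then to read off the Riemann sum representation from Proposition \ref{lien-laha-int}. The key technical observation is that, although $X^{x,i}\,\der z^i$ naturally lives in $\cb_{\ka,p}$, it is easier to estimate it in the base space $\cb_p$, where both the $\ga$-regularity of $X^{x,i}$ and the $\ka$-regularity of $z^i$ are available simultaneously; the regularization property of $\hat\Lambda$ will then trade time regularity for spatial regularity to land us back in $\cb_{\ka,p}$.

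First I would perform the algebraic check. Using relation (\ref{relation-alg-m-l}) with $M=X^{x,i}$ and $L=\der z^i$, together with $\delha X^{x,i}=0$ (from Hypothesis \ref{hypo-x-x-young}) and $\der\der=0$, one gets $\delha(X^{x,i}\der z^i)=(\delha X^{x,i})(\der z^i)-X^{x,i}(\der\der z^i)=0$. For the analytic check, write $(X^{x,i}\der z^i)_{tus}=X^{x,i}_{tu}(\der z^i)_{us}$ and estimate in $\cb_p$: with $X^{x,i}\in\cac_2^\ga(\cl(\cb_p,\cb_p))$ and $z^i\in\cac_1^\ka(\cb_p)$,
\[
\cn[X^{x,i}_{tu}(\der z^i)_{us};\cb_p]\;\leq\;|t-u|^\ga|u-s|^\ka\,\cn[X^{x,i};\cac_2^\ga(\cl(\cb_p))]\,\cn[z^i;\cac_1^\ka(\cb_p)].
\]
Since $\ga+\ka>1$, this places $X^{x,i}\der z^i$ in $\cac_3^{\ga+\ka}(\cb_p)\cap\text{Ker}\,\delha$, so Theorem \ref{existence-laha} applies with $\mu=\ga+\ka$ and $\al=\ka\in[0,\mu)$, yielding $\hat\Lambda(X^{x,i}\der z^i)\in\cac_2^\ga(\cb_{\ka,p})$ with norm controlled by $\cn[X^x;\cac_2^\ga(\cl(\cb_p))]\,\cn[z^i;\cac_1^\ka(\cb_p)]$.

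Next I would handle the first summand $X^{x,i}_{ts}z^i_s$: it lies in $\cac_2^\ga(\cb_{\ka,p})$ via the trivial bound $\|X^{x,i}_{ts}z^i_s\|_{\cb_{\ka,p}}\leq|t-s|^\ga\cn[X^{x,i};\cac_2^\ga(\cl(\cb_{\ka,p}))]\,\cn[z^i;\cac_1^0(\cb_{\ka,p})]$. Summing the two contributions gives $\cj(\hat d x\,z)\in\cac_2^\ga(\cb_{\ka,p})$ together with the estimate (\ref{estimation-cas-young}). To produce $\hat z$, I compute $\delha\cj(\hat d x\,z)$: using (\ref{relation-alg-m-l}) again, $\delha(X^{x,i}z^i)=(\delha X^{x,i})z^i-X^{x,i}(\der z^i)=-X^{x,i}\der z^i$, while $\delha\hat\Lambda(X^{x,i}\der z^i)=X^{x,i}\der z^i$ by construction of $\hat\Lambda$. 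Hence $\delha\cj(\hat d x\,z)=0$, so by the exactness statement $\text{Ker}\,\delha_{|\cac_2}=\delha\cac_1$ there exists $\hat z\in\cac_1$ with $\delha\hat z=\cj(\hat d x\,z)$, which by the very definition of $\hat\cac_1^\ga(\cb_{\ka,p})$ belongs to this space and inherits the estimate (\ref{estimation-cas-young}).

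Finally, for the Riemann sum representation I would apply Proposition \ref{lien-laha-int} to $g_{ts}=X^{x,i}_{ts}z^i_s$. The computation above shows $\delha g=-X^{x,i}\der z^i\in\cac_3^{\ga+\ka}(\cb_p)$ with $\ga+\ka>1$, so the proposition gives
\[
((\id-\hat\Lambda\delha)g)_{ts}\;=\;\lim_{|\Pi_{ts}|\to 0}\sum_{(t_k)\in\Pi_{ts}}S_{tt_{k+1}}X^{x,i}_{t_{k+1}t_k}z^i_{t_k}
\]
in $\cb_{\ka,p}$. But the left-hand side is exactly $X^{x,i}_{ts}z^i_s+\hat\Lambda_{ts}(X^{x,i}\der z^i)=\cj_{ts}(\hat d x\,z)$, which yields (\ref{int-young-somme}). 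The identification with the classical Young integral $\int_s^tS_{tu}\,dx_u\,z_u$ when $x,z$ are smooth follows from the computation leading to (\ref{int-young-inf}). The main obstacle is conceptually mild but crucial: one must resist estimating $X^{x,i}\der z^i$ directly in $\cb_{\ka,p}$ (which would only yield $\cac_3^\ga$ regularity and fail the $\mu>1$ threshold) and instead exploit the $\cb_p$-norm to harvest the extra factor $|u-s|^\ka$, letting the regularization of $\hat\Lambda$ recover the spatial regularity afterwards.
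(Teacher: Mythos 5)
Your proof is correct and follows essentially the same route as the paper's: estimate the first summand directly in $\cb_{\ka,p}$, place $X^{x,i}\,\der z^i$ in $\cac_3^{\ga+\ka}(\cb_p)\cap\mathrm{Ker}\,\delha$, invoke Theorem~\ref{existence-laha} with $\al=\ka$ to trade time regularity for the missing spatial regularity, and read off the Riemann sums from the reformulation $\cj=(\id-\Laha\delha)(X^{x,i}z^i)$ via Proposition~\ref{lien-laha-int}. You simply spell out the algebraic computations (the cancellation $\delha(X^{x,i}z^i)=-X^{x,i}\der z^i$ and the verification that $\delha\cj=0$) which the paper compresses into one sentence.
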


\begin{proof}
The fact that $\cj_{ts}(\hat{d}x \, z)$ is well defined is a direct consequence of Hypothesis \ref{hypo-x-x-young}, and the Chasles relation $\delha \cj(\hat{d}x \, z)$, which accounts for the existence of $\hat{z}$, can be shown by straightforward computations using~\eqref{relation-alg-m-l}.

\smallskip

For the second point, notice that, thanks to Hypothesis \ref{hypo-x-x-young}, one has
\begin{multline*}
\cn[\cj(\hat{d}x \, z);\cac_2^\ga(\cb_{\ka,p})]\\
\leq  \cn[X^{x,i};\cac_2^\ga(\cl(\cb_{\ka,p},\cb_{\ka,p}))] \,  \cn[ z^i;\cac_1^0(\cb_{\ka,p})]+\cn[\Laha(X^{x,i} \, \der z^i);\cac_2^\ga(\cb_{\ka,p})],
\end{multline*}
since $X^{x,i}\, \der z^i \in \cac_3^{\ga+\ka}(\cb_p)$. By the contraction property (\ref{contraction-laha}) of $\Laha$, we then deduce
$$\cn[\Laha(X^{x,i} \, \der z^i);\cac_2^\ga(\cb_{\ka,p})] \leq c \, \cn[X^{x,i};\cac_2^\ga(\cl(\cb_p,\cb_p))] \, \cn[z^i;\cac_1^\ka(\cb_p)],$$
which completes the proof of (\ref{estimation-cas-young}). According to Proposition \ref{lien-laha-int}, (\ref{int-young-somme}) is a consequence of the reformultion
\begin{equation}\label{reformulation-young}
\cj(\hat{d}x \, z)=(\text{Id}-\Laha \delha)(X^{x,i} z^i).
\end{equation}

\end{proof}

\begin{remark}
The careful readers may wonder if the starting decomposition (\ref{dec-ele-inf}) is really the most relevant choice as far as the stability of the $\delha$ structure is concerned. Indeed, at first glance, it seems more appropriate to look for an expression written in terms of $\delha$ only, and thus, for a one-dimensional noise $x$, we would rather rely on the decomposition 
\begin{equation}\label{autre-decompo-young}
\int_s^t S_{tu} \, dx_u \, z_u=S_{ts} \der x_{ts} \, z_s+\int_s^t S_{tu} \, dx_u \, (\delha z)_{us}.
\end{equation}
The order-one operator then becomes $\tilde{X}^x_{ts}=S_{ts} \der x_{ts}=\int_s^t S_{tu} \, dx_u \, S_{us}$, which coincides with the first order operator built in \cite{GT}.

\smallskip

However, one must keep in mind the particular form of the integrand we are about to consider in the system (\ref{syst-gen}), namely $z=f(y)$, for some non-linear function $f$. In order to settle a fixed-point argument, we will have to control the regularity of this integrand according to the regularity of $y$ and to do so, we can only resort to the standard tools of differential calculus, which are not consistent with the $\delha$ formalism. In other words, if one wants to estimate the norm of $\delha f(y)$, one is forced to estimate the norm of the classical increment $\der f(y)$ first. In this context, the decompositions (\ref{dec-ele-inf}) and (\ref{autre-decompo-young}) give rise to similar treatments. 

\end{remark}

\subsection{Resolution of the associated differential system} 

Using the formalism we have just introduced, we are going to show the following result of existence and uniqueness of a global solution.
To begin with, let us state the assumption on the regularity of the functions $\sigma_i$ appearing in the definition of the vectorfields $f_i$, $i=1,\dots,N$.

\begin{hypothesis}
\label{def:reg-f}
Let $f:\cB_p \to \cB_p$ be a vector field defined by $f(\varphi)(\xi) = \sigma(\xi,\varphi(\xi))$ for some function $\sigma: \R^n\times \R \to \R$.
We say that $f \in \mathcal{X}_k$ for $k\ge 1$ if $\sigma$ is of uniformly compact support in the first variable, in the following sense: $\si_i:\R^n\times\R\to\R$ is such that $\si_i(\cdot,\eta)=0$ outside of a ball $B_{\R^N}(0,M)$, independently of $\eta\in\R$.

\smallskip

In order to be element of $\mathcal{X}_k$, we also ask to a vector field $f$ to satisfy the following inequality:
$$
\sup_{\xi\in\R^n,\eta\in \R} \max_{n=0,\dots,k}|\nabla_\eta^n \sigma_i(\xi,\eta)|+\max_{n=0,\dots,k-1}|\nabla_\xi \nabla_\eta^n \sigma_i(\xi,\eta)|
 < +\infty .
$$
\end{hypothesis}

A direct application of Proposition \ref{result-stri} easily leads to:
\begin{proposition}\label{prop-stri}
If $f \in \mathcal{X}_1$, then for any $\vp \in \cb_{\al,p}$, $f(\vp) \in \cb_{\al,p}$ and
$$\cn[f(\vp);\cb_{\al,p}] \leq c_f \lcl 1+\cn[\vp;\cb_{\ka,p}] \rcl.$$
\end{proposition}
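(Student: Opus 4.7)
The plan is to apply Strichartz's characterization (Proposition \ref{result-stri}), which reduces the problem to controlling $\cn[f(\vp);\cb_p]$ and $\cn[T_\al f(\vp);\cb_p]$ separately. Since $\sigma(\xi,\eta)=0$ for $|\xi|>M$ and $\sigma$ is uniformly bounded by Hypothesis \ref{def:reg-f}, the first norm is bounded by $\|\sigma\|_\infty |B_{\R^n}(0,M)|^{1/p}$, independent of $\vp$. This accounts for the additive constant $1$ in the right-hand side.

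For $T_\al f(\vp)$, I would split the integrand of $T_\al$ by adding and subtracting an intermediate term:
\begin{equation*}
\sigma(\xi+r\eta,\vp(\xi+r\eta))-\sigma(\xi,\vp(\xi)) = I_1(\xi,r,\eta) + I_2(\xi,r,\eta),
\end{equation*}
with $I_1 = \sigma(\xi+r\eta,\vp(\xi+r\eta))-\sigma(\xi,\vp(\xi+r\eta))$ and $I_2 = \sigma(\xi,\vp(\xi+r\eta))-\sigma(\xi,\vp(\xi))$. The term $I_1$ is estimated by $\|\nabla_\xi\sigma\|_\infty\, r|\eta|$ (using $f\in\mathcal{X}_1$), and crucially vanishes outside $\{|\xi|\le M+1\}$ since $\sigma$ has compact support in its first argument. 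The term $I_2$ is bounded by $\|\nabla_\eta\sigma\|_\infty\, |\vp(\xi+r\eta)-\vp(\xi)|$.

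Plugging $I_1$ into the definition of $T_\al$ yields
\begin{equation*}
\lp\iou r^{-1-4\al}\lc\int_{|\eta|\le 1} r|\eta|\, d\eta\rc^2 dr\rp^{1/2} \mathbf{1}_{|\xi|\le M+1} \le C \,\mathbf{1}_{|\xi|\le M+1},
\end{equation*}
where the $r$-integral converges thanks to $\al<1/2$, and the resulting $\cb_p$ norm is again a constant depending only on $f$ and $M$. Plugging $I_2$ gives the pointwise bound $\|\nabla_\eta\sigma\|_\infty\, T_\al\vp(\xi)$, whose $\cb_p$-norm is controlled by $C\cn[\vp;\cb_{\al,p}]$ by the other direction of Strichartz's equivalence. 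Summing the two contributions and combining with the $L^p$ bound on $f(\vp)$ yields the announced estimate $\cn[f(\vp);\cb_{\al,p}] \le c_f\{1+\cn[\vp;\cb_{\al,p}]\}$; the case $\al=0$ is immediate from the $L^p$ bound alone.

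The only delicate point is the bookkeeping for $I_1$: one must observe that the compact support of $\sigma$ in $\xi$ confines the nonzero values of the increment of $f(\vp)$ to a fixed bounded set, so that the $\xi$-independent bound $Cr|\eta|$ produces a finite $L^p$-norm after integration; and that the $r$-integral is integrable precisely in the Strichartz range $\al\in(0,1/2)$, which is consistent with the framework of the paper.
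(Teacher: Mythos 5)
Your argument is correct and is precisely the ``direct application of Proposition \ref{result-stri}'' that the paper invokes without spelling it out. The $L^p$ bound via the uniform compact support of $\sigma$ in $\xi$, the splitting of the increment of $\sigma(\xi,\vp(\xi))$ into a $\xi$-increment $I_1$ (controlled by $\|\nabla_\xi\sigma\|_\infty r|\eta|$, supported in $|\xi|\le M+1$, producing a convergent $r$-integral for $\al<1/2$) and a $\vp$-increment $I_2$ (controlled by $\|\nabla_\eta\sigma\|_\infty\, T_\al\vp(\xi)$ and then by Strichartz), and the fact that $\mathcal{X}_1$ provides exactly the bounds on $\sigma$, $\nabla_\eta\sigma$ and $\nabla_\xi\sigma$ that you use, all match what the cited criterion requires; the only discrepancy is that the statement in the paper writes $\cn[\vp;\cb_{\ka,p}]$ on the right-hand side, which is evidently a typo for $\cn[\vp;\cb_{\al,p}]$ as you have it.
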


The following notation will also be used in the sequel of the paper.
\begin{notation}\label{not:ineq-c-a}
Let $A,B$ be two positive quantities, and $a$ a parameter lying in a certain vector space $V$. We say that $A\lesssim_{a}B$ if there exists a positive constant $c_a$ depending on $a$ such that $A\le c_a B$. When we don't want to specify the dependence on $a$, we just write $A\lesssim B$. Notice also that the value of the constants $c$ or $c_a$ in our computations can change from line to line, throughout the paper.
\end{notation}

\smallskip

We are now ready to prove the main theorem of this section:
\begin{theorem}\label{theo-cas-young}
Assume  Hypothesis~\ref{hypo-x-x-young} with $\ga>1/2$, and assume also that $f=(f_1,\ldots,f_N)$ with $f_i\in \cx_2$ for $i=1,\dots,N$. For any $\ka < \ga$ such that $\ga+\ka >1$ and $2\ka p >n$, consider the space $\cacha_1^{0,\ka}([0,T],\cb_{\ka,p})=\cac_1^0([0,T],\cb_{\ka,p}) \cap \cacha_1^\ka([0,T],\cb_{\ka,p})$, provided with the norm
$$
\cn[ .; \cacha_1^{0,\ka}([0,T],\cb_{\ka,p})]=\cn[.;\cac_1^0([0,T],\cb_{\ka,p})]+\cn[.;\cacha_1^\ka([0,T],\cb_{\ka,p})].
$$
Then the infinite-dimensional system
\begin{equation}\label{syst-young}
(\delha y)_{ts}=\cj_{ts}(\hat{d}x \, f(y)) \quad , \quad y_0=\psi \in \cb_{\ka,p},
\end{equation} 
interpreted with Proposition \ref{prop-int-young}, admits a unique global solution in $\cacha_1^{0,\ka}([0,T],\cb_{\ka,p})$. Besides, the Itô application $(\psi,X^{x,i}) \mapsto y$, where $y$ is the unique solution of (\ref{syst-young}), is Lipschitz.
\end{theorem}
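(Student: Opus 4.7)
The plan is to set up a standard Picard-type fixed point argument in the Banach space $\cacha_1^{0,\ka}([0,T],\cb_{\ka,p})$, exploiting the machinery of Proposition \ref{prop-int-young} and the linear growth estimate of Proposition \ref{prop-stri}. Concretely, for $y\in \cacha_1^{0,\ka}$, Proposition \ref{prop-int-young} applied to the integrand $z=f(y)$ produces an element $\hat z\in\cacha_1^\ga(\cb_{\ka,p})$ with $\delha\hat z=\cj(\hat d x\,f(y))$; I define $\Gamma(y)_t:=\psi+\hat z_{t0}$, so that $\Gamma(y)_0=\psi$ and $\delha\Gamma(y)=\cj(\hat d x\,f(y))$.

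The central estimate I will establish is that, on an interval $[0,T_0]$ with $T_0$ small enough, $\Gamma$ maps a suitable ball of $\cacha_1^{0,\ka}([0,T_0],\cb_{\ka,p})$ into itself and is a strict contraction. For this, I first check that whenever $y\in\cacha_1^{0,\ka}$, the hypotheses of Proposition \ref{prop-int-young} are met by $f(y)$: Proposition \ref{prop-stri} yields $\cn[f(y);\cac_1^0(\cb_{\ka,p})]\lesssim_f 1+\cn[y;\cac_1^0(\cb_{\ka,p})]$, while the bound $\cn[f(y);\cac_1^\ka(\cb_p)]$ is obtained from a classical Taylor expansion
\begin{equation*}
(\der f(y))_{ts}(\xi)=\int_0^1 \partial_\eta\sigma(\xi, y_s(\xi)+r(\der y)_{ts}(\xi))\, dr\,(\der y)_{ts}(\xi),
\end{equation*}
combined with the identity $(\der y)_{ts}=(\delha y)_{ts}+a_{ts}y_s$ and the H\"older regularity estimate $\cn[a_{ts}\vp;\cb_p]\lesssim |t-s|^\ka \cn[\vp;\cb_{\ka,p}]$ from (\ref{regu-hold-semi}). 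Inserting these into (\ref{estimation-cas-young}) yields
\begin{equation*}
\cn[\Gamma(y);\cacha_1^\ka([0,T_0],\cb_{\ka,p})]\lesssim_{x,f}\bigl(1+\cn[y;\cacha_1^{0,\ka}([0,T_0],\cb_{\ka,p})]\bigr),
\end{equation*}
and the $\cac_1^0(\cb_{\ka,p})$ component of $\Gamma(y)$ is controlled by $\cn[\psi;\cb_{\ka,p}]+T_0^{\ga-\ka}\cn[\Gamma(y);\cacha_1^\ka]$, together with the trivial bound $\cn[a_{t0}\psi;\cb_{\ka,p}]\lesssim\cn[\psi;\cb_{\ka,p}]$. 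A parallel computation for $\Gamma(y)-\Gamma(\tilde y)$, using the $C^2$ regularity of each $\sigma_i$ and expanding $\partial_\eta\sigma(\xi,y_s)-\partial_\eta\sigma(\xi,\tilde y_s)$, yields a Lipschitz constant of order $T_0^{\ga-\ka}$ times a polynomial in the norms of $y$ and $\tilde y$. Choosing $T_0$ small gives the contraction and thus a unique local solution by Banach's fixed point theorem.

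To promote the local solution to a global one on $[0,T]$, I exploit the crucial fact that Proposition \ref{prop-stri} gives only \emph{linear} growth of $f$ in $\cb_{\ka,p}$, so that the usual a priori Gronwall-type estimate applies: any solution $y$ on an interval $[0,T^*]$ satisfies
\begin{equation*}
\cn[y;\cac_1^0([0,T^*],\cb_{\ka,p})]\le \cn[\psi;\cb_{\ka,p}]+c_{x,f}\int_0^{T^*}\bigl(1+\cn[y;\cac_1^0([0,s],\cb_{\ka,p})]\bigr)ds,
\end{equation*}
after absorbing the $\cacha_1^\ka$-part via the contraction estimate on sub-intervals of fixed small length. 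Iterating the local construction on successive intervals of length comparable to $T_0$, whose size depends only on $\cn[X^x;\cdot]$, $f$ and the current size of $y$, and using this a priori bound to prevent blow-up, gives a solution on the whole of $[0,T]$; uniqueness propagates interval by interval. Finally, for the It\^o map's Lipschitz character, I repeat the difference estimate with two sets of data $(\psi,X^x)$ and $(\tilde\psi,\tilde X^x)$: the linearity of $\cj$ in $X^x$ lets me split each term into a piece controlled by $\cn[X^x-\tilde X^x;\cdot]$ times a polynomial in the solution norms, plus a piece controlled by $\cn[y-\tilde y;\cdot]$, and a Gronwall-type absorption gives the claimed Lipschitz dependence. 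The main obstacle I anticipate is the bookkeeping required when converting $\der$-type Taylor bounds for $f(y)$ into $\delha$-type estimates compatible with Proposition \ref{prop-int-young}, in particular tracking how the gain of time-regularity $|t-s|^\ka$ from $a_{ts}$ is paid for by a loss in spatial regularity, and verifying that choosing $2\ka p>n$ (so that $\cb_{\ka,p}$ is an algebra of continuous functions) keeps every norm in the computation finite.
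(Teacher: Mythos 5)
Your proposal follows essentially the same route as the paper: a Picard fixed-point argument for the map $\Gamma$ defined by $\Gamma(y)_0=\psi$ and $\delha\Gamma(y)=\cj(\hat d x\, f(y))$, with invariance of a ball obtained from Proposition \ref{prop-stri} and the H\"older property (\ref{regu-hold-semi}) of $a_{ts}$, and contraction obtained from a Taylor expansion of $\sigma_i$ together with the Sobolev embedding $\cb_{\ka,p}\subset\cb_\infty$. The paper only references a ``standard'' argument for globalization and for the Lipschitz dependence of the It\^o map, whereas you sketch these steps (iteration over subintervals using the linear growth of $f$ in $\cb_{\ka,p}$, plus splitting differences in $X^x$); this is the same idea, just spelled out.
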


\begin{remark}
In the last statement, we consider the operators $X^{x,i}$  as elements of the incremental space $\cac_2^\ga(\cl(\cb_{p},\cb_p)) \cap \cac_2^\ga(\cl(\cb_{\ka,p},\cb_{\ka,p}))$. The regularity of the Itô application with respect to $X^{x,i}$ is then relative to the norm
$$\cn[.;\cac \cl^{\ka,\ga,p}]=\cn[.;\cac_2^\ga(\cl(\cb_p,\cb_p))]+\cn[.;\cac_2^\ga(\cl(\cb_{\ka,p},\cb_{\ka,p}))].$$
\end{remark}

\begin{proof}
It is a classical fixed-point argument. We will only prove the existence and uniqueness of a local solution. The reasoning which enables to extend the local solution into a solution on the whole interval $[0,T]$ is standard; some details about the general procedure can be found in \cite{rough} (in a slightly different context). 

\smallskip

We consider an interval $I=[0,T_\ast]$ with $T_\ast$ a time that may change during the proof, and the application $\Gamma:\cacha_{1,\psi}^{0,\ka}(I, \cb_{\ka,p}) \to \cacha_{1,\psi}^{0,\ka}(I,\cb_{\ka,p})$ defined by $\Gamma(y)_0=\psi$ and $(\delha \Gamma(y))_{ts}=\cj_{ts}(\hat{d}x \, f(y))$.

\smallskip

\noindent
\textit{Invariance of a ball}. Let $y\in \cacha_{1,\psi}^{0,\ka}(I, \cb_{\ka,p})$ and $z=\Gamma(y)$. By (\ref{estimation-cas-young}), we know that
\begin{equation}\label{prem-est-young}
\cn[z;\cacha_1^\ka(I,\cb_{\ka,p})]
\leq c_x \lln I\rrn^{\ga-\ka} \lcl \cn[f_i(y);\cac_1^\ka(I,\cb_p)]+\cn[f_i(y);\cac_1^0(\cb_{\ka,p})] \rcl.
\end{equation}
Recalling our convention in Notation \ref{not:ineq-c-a}, the assumption $f_i\in \cx_1$ is enough  to guarantee that the following bounds holds for $f_i$:
$\cn[f_i(\varphi)-f_i(\psi);\cb_{p}] \lesssim_f \cn[\varphi-\psi;\cb_{p}]$
and $\cn[f_i(\varphi);\cb_{\ka,p}] \lesssim_f 1+\cn[\varphi;\cb_{\ka,p}]
$ for arbitrary test functions $\varphi,\psi$.
So we have 
\bean
\cn[f_i(y);\cac_1^\ka(I,\cb_p)] &\lesssim_f & \cn[y;\cac_1^\ka(I,\cb_p)]\\
&\lesssim_f &  \cn[y;\cac_1^0(I,\cb_{\ka,p})]+\cn[y;\cacha_1^\ka(I,\cb_{\ka,p})]  \\
& \lesssim_f &  \cn[y;\cacha_1^{0,\ka}(I,\cb_{\ka,p})],
\eean
where, to get the second inequality, we have used the property (\ref{regu-hold-semi}) of the semigroup. We get also
$\cn[f_i(y);\cac_1^0(\cb_{\ka,p})] \lesssim_f 1+\cn[y;\cac_1^0(\cb_{\ka,p})] $, which, going back to (\ref{prem-est-young}), leads to
$$\cn[z;\cacha_1^\ka(I,\cb_{\ka,p})] \lesssim_{x,f} \lln I\rrn^{\ga-\ka} \lcl 1+\cn[y;\cacha_1^{0,\ka}(I,\cb_{\ka,p})] \rcl.$$
Besides, $z_s=(\delha z)_{s0}+S_s \psi$, hence, since $S_s$ is a contraction operator on $\cb_{\ka,p}$,
$$\cn[z;\cac_1^0(I,\cb_{\ka,p})] \leq \lln I\rrn^\ka \cn[z;\cacha_1^\ka(I,\cb_{\ka,p})]+\norm{\psi}_{\cb_{\ka,p}}.$$
Finally,
$$\cn[z;\cacha_1^{0,\ka}(I,\cb_{\ka,p})] \leq \norm{\psi}_{\cb_{\ka,p}}+c_x \lln I \rrn^{\ga-\ka}\lcl 1+ \cn[y;\cacha_1^{0,\ka}(I,\cb_{\ka,p})] \rcl.$$
Then we choose $I=[0,T_1]$ such that $c_x T_1^{\ga-\ka} \leq \frac{1}{2}$ to get the invariance by $\Gamma$ of the balls
$$B_{T_0,\psi}^R=\{ y \in \cacha_1^{0,\ka}([0,T_0],\cb_{\ka,p}): \ y_0=\psi, \quad \cn[y;\cac_{1}^{0,\ka}([0,T_0],\cb_{\ka,p})] \leq R  \},$$
for any $T_0 \leq T_1$, with (for instance) $R=1+2 \norm{\psi}_{\cb_{\ka,p}}$.

\smallskip

\noindent
\textit{Contraction property}. Let $y,\yti \in \cacha_{1,\psi}^{0,\ka}(I,\cb_{\ka,p})$ and $z=\Gamma(y),\zti=\Gamma(\yti)$. By (\ref{estimation-cas-young}), 
\begin{multline}\label{decompo-contr}
\cn[z-\zti;\cacha_1^\ka(\cb_{\ka,p})] \leq \\
c_x \lln I \rrn^{\ga-\ka} \lcl \cn[ f_i(y)-f_i(\yti);\cac_1^0(\cb_{\ka,p})]+\cn[f_i(y)-f_i(\yti); \cac_1^\ka(\cb_p)] \rcl.
\end{multline}
In order to estimate the Hölder norm $\cn[f_i(y)-f_i(\yti);\cac_1^\ka(\cb_p)]$, we rely on the decomposition
\begin{multline*}
\sigma_i(\xi,y_t(\xi))-\sigma_i(\xi,\yti_t(\xi))-\sigma_i(\xi,y_s(\xi))+\sigma_i(\xi,\yti_s(\xi))\\
=\der(y-\yti)_{ts}(\xi) \int_0^1 dr \, \sigma_i'(\xi,y_s(\xi)+r(\der y)_{ts}(\xi))\\
+(\der \yti)_{ts}(\xi) \int_0^1 dr \lcl \sigma_i'(\xi,y_s(\xi)+r(\der y)_{ts}(\xi))-\sigma_i'(\xi,\yti_s(\xi)+r(\der \yti)_{ts}(\xi)) \rcl.
\end{multline*}
Therefore,
\begin{multline*}
\cn[ f_i(y)-f_i(\yti);\cac_1^\ka(\cb_p)]\\
\leq c_f \lcl \cn[y-\yti;\cacha_1^{0,\ka}(\cb_{\ka,p})]+\cn[\yti;\cacha_1^{0,\ka}(\cb_{\ka,p})] \, \cn[y-\yti;\cac_1^0(\cb_\infty)] \rcl.
\end{multline*}
Remember that we have assumed that $2\ka p >n$, so that, by the Sobolev continuous inclusion $\cb_{\ka,p} \subset \cb_\infty$ , $\cn[y-\yti;\cac_1^0(\cb_\infty)] \leq \cn[y-\yti;\cac_1^0(\cb_{\ka,p})]$ and as a result
$$\cn[f_i(y)-f_i(\yti);\cac_1^\ka(\cb_p)] \leq c \, \cn[y-\yti;\cacha_1^{0,\ka}(\cb_{\ka,p})] \lcl 1+\cn[\yti;\cacha_1^{0,\ka}(\cb_{\ka,p})]\rcl.$$
The same kind of argument easily leads to
\begin{multline*}
\cn[f_i(y)-f_i(\yti);\cac_1^0(\cb_{\ka,p})]\\
\leq c \, \cn[y-\yti;\cacha_1^{0,\ka}(\cb_{\ka,p})] \lcl 1+\cn[y;\cacha_1^{0,\ka}(\cb_{\ka,p})]+\cn[\yti;\cacha_1^{0,\ka}(\cb_{\ka,p})]\rcl,
\end{multline*}
The last two estimations, together with (\ref{decompo-contr}), provide a control of $\cn[z-\zti;\cacha_1^\ka(\cb_{\ka,p})]$ in terms of $y,\yti$.
Moreover, as $z_0=\zti_0=\psi$, 
$$\cn[z-\zti;\cac_1^0(\cb_{\ka,p})] \leq \lln I \rrn^\ka \cn[z-\zti;\cacha_1^\ka(\cb_{\ka,p})].$$
Now, if $y,\yti$ both belong to one of the invariant balls $B_{T_0,\psi}^R$, with $T_0 \leq T_1$, the above results give
$$\cn[z-\zti;\cacha_1^{0,\ka}([0,T_0],\cb_{\ka,p})] \leq c_x T_0^{\ga-\ka} \lcl 1+2R \rcl \cn[y-\yti;\cacha_1^{0,\ka}([0,T_0],\cb_{\ka,p})].$$
It only remains to pick $T_0 \leq T_1$ such that $c_x T_0^{\ga-\ka} \lcl 1+2R\rcl \leq \frac{1}{2}$, and we get the contraction property of the application $\Gamma: B_{T_0,\psi}^R \to B_{T_0,\psi}^R$. This statement obviously completes the proof of the existence and uniqueness of a solution to (\ref{syst-young}) defined on $[0,T_0]$.

\end{proof}

\section{Rough case}\label{section-cas-rough}

The aim of this section is to go one step further in the rough path procedure: We would like to conceive more sophisticated developments of the integral so as to cope with a $\ga$-Hölder driving process, with $\ga \in (1/3,1/2)$.

\subsection{Heuristic considerations}\label{section-rough-heuri}
The strategy to give a (reasonable) sense to the integral $\int_s^t S_{tu} \, dx^i_u \, f_i(y_u)$ will be largely inspired by the reasoning followed for the standard integral $\int_s^t y_u \, dx_u$, explained in \cite{rough,GT}. Thus, let us suppose at first that the process $x$ is differentiable (in time), as a function with values in a Banach space. The procedure to reach a suitable decomposition of the integral divides into two steps:
\begin{itemize}
\item Identify the space $\cq$ of controlled processes which will accomodate the solution of the system.
\item Decompose $\int_s^t S_{tu} \, dx^i_u \, f_i(y_u)$ as an element of $\cq$ when $y$ belongs itself to $\cq$, until we get an expression likely to remain meaningful if $x$ is less regular.
\end{itemize}

This heuristic reasoning essentially aims at identifying the algebraic structures which will come into play. The details concerning the analytical conditions will be checked a posteriori. The noisy nonlinearity is given by equation (\ref{forme-bruit}), namely 
\begin{equation*}
X_t(\varphi)=\sum_{i=1}^N  \, x^{i}_t f_i(\varphi),
\quad\mbox{with}\quad
f_i(\varphi)(\xi) = \sigma_i(\xi,\varphi(\xi)),
\end{equation*}
and we shall see that $\si_i$ has to be considered as an element of $\cx_2$, as defined in Hypothesis~\ref{def:reg-f}.

\smallskip

\noindent
\textit{Step 1: Identification of the controlled processes}. The first elementary decomposition still consists in:
\begin{equation}\label{proc-contr-1}
\int_s^t S_{tu} \, dx^i_u \, f_i (y_u)=\lp \int_s^t S_{tu} \, dx^i_u \rp f_i(y_s)+\int_s^t S_{tu} \, dx^i_u \, \der(f_i(y))_{us}.
\end{equation}

It is then natural to think that the potential solution of the system is to belong to a space structured by the relation
$$(\delha y)_{ts}=\lp \int_s^t S_{tu} \, dx^i_u \rp y^{x,i}_s+y^\sharp_{ts},$$
with $y^\sharp$ admitting a Hölder regularity twice higher than $y$. For the solution itself, we would have $y^{x,i}_s=f_i(y_s)$, $y^\sharp_{ts}=\int_s^t S_{tu} \, dx^i_u \, \der(f_i(y))_{us}$ hence the potential algebraic structure of the controlled processes
$$\cq=\{ y: \ \delha y=X^{x,i}_{ts} y^{x,i}_s+y^\sharp_{ts}\}, \ \text{with} \ X^{x,i}_{ts}=\int_s^t S_{tu} \, dx_u^i.$$
Remember that the latter operator satisfies the algebraic relation
\begin{equation}\label{rel-alg-xx}
\delha X^{x,i}=0.
\end{equation}
Besides, it will turn out useful in the sequel to write $X^{x,i}$ as
\begin{equation}\label{decompo-x-x-1}
X^{x,i}=X^{ax,i}+\der x^i \quad , \quad \text{with} \ X^{ax,i}_{ts}=\int_s^t a_{tv} \, dx^i_v.
\end{equation}
Morally, $X^{ax,i}$ admits a higher Hölder regularity than $x$ owing to the property (\ref{regu-hold-semi}) of the semigroup. We will go back over the usefulness of this trivial decomposition in Remark~\ref{rk-x-x-x}.
In the following we will omit sometimes the vector indexes $i,j,\dots$ whenever the contractions are obvious. 

\smallskip

\noindent
\textit{Step 2: Decomposition of $\int_s^t S_{tu} \, dx_u \, f_i(y_u)$ when $y\in \cq$}. Going back to expression~(\ref{proc-contr-1}), we see that it is more exactly the integral $\int_s^t S_{tu} \, dx_u \, \der(f_i(y))_{us}$ that remains to be dissected when $y\in \cq$, that is to say when the $\delha$-increment of $y$ can be written as
$(\delha y)_{ts}=X^{x,i}_{ts}y^{x,i}_s+y^\sharp_{ts}$. 
To this purpose, let us introduce a new notation which will appear in many of our future computations:
\begin{notation}\label{not:f-prime}
For any $f\in\cx_2$ as defined in Hypothesis \ref{def:reg-f}, we set
$$
[f'(\varphi)](\xi) = \nabla_2 \sigma(\xi,\varphi(\xi)),
$$
where $\nabla_2$ stands for the derivative with respect to the second variable. The function $f'$ is understood as a mapping from $\cb_p$ to $\cb_p$ for any $p\ge 1$.
\end{notation}

Using this notational convention, notice that
\begin{equation}
\label{decompo-f-un}
\begin{split}
\der(f_i(y))_{ts} &= (\der y)_{ts} \cdot f'_i(y_s)+\int_0^1 dr \, \lc f'_i(y_s+r(\der y)_{ts})-f_i'(y_s) \rc \cdot (\der y)_{ts}\\
&= (a_{ts}y_s) \cdot f'_i(y_s)+(\delha y)_{ts} \cdot f'_i(y_s)+f_i(y)^{\sharp,1}_{ts}\\
&= (a_{ts}y_s) \cdot f_i'(y_s)+(X^{x,j}_{ts}y^{x,j}_s)\cdot f_i'(y_s)+f_i(y)^{\sharp,1}_{ts}+f_i(y)^{\sharp,2}_{ts}\\
&= (a_{ts}y_s) \cdot f_i'(y_s)+(\der x^j)_{ts} \cdot y^{x,j}_s \cdot f_i'(y_s)+f_i(y)^{\sharp,1}_{ts}+f_i(y)^{\sharp,2}_{ts}+f_i(y)^{\sharp,3}_{ts},
\end{split}\end{equation}
where we have successively introduced the notations
\begin{equation}\label{def-exa-f-et-1}
f_i(y)^{\sharp,1}_{ts}=\int_0^1 dr \, \lc f_i'(y_s+r(\der y)_{ts})-f_i'(y_s) \rc \cdot (\der y)_{ts} \quad , \quad f_i(y)^{\sharp,2}_{ts}=y^\sharp_{ts} \cdot f_i'(y_s),
\end{equation}
\begin{equation}\label{def-exa-f-et-2}
f_i(y)^{\sharp,3}_{ts}=(X^{ax,j}_{ts}y^{x,j}_s)\cdot f_i'(y_s).
\end{equation}

Observe that, in the course of those computations, we have used some additional conventions that we make explicit for further use:
\begin{notation}\label{not:bilinear-B}
Let $\vp,\psi$ be two elements of $\cb_p$. Then $\vp\cdot\psi$ is the element of $\cb_{p/2}$ defined by the pointwise multiplication $[\vp\cdot\psi](\xi)=\vp(\xi)\, \psi(\xi)$. If we assume furthermore that $M,N$ are two elements of $\cl(\cb_p;\cb_p)$, then the bilinear form $B(M\otimes N)$ is defined as:
$$
B(M\otimes N): \cb_p\times\cb_p\to\cb_{p/2}, \quad
(\vp,\psi)\mapsto [B(M\otimes N)](\vp,\psi)=M(\vp) \cdot N(\psi).
$$
\end{notation}
With this convention in mind, the algebraic decomposition (\ref{decompo-f-un}) of $f_i(y)$ can now be read as:
\begin{equation}\label{decompo-f-deux}
\der(f_i(y))_{ts}=B (a_{ts} \otimes \text{Id})(y,f'_i(y))_s+(\der x^j)_{ts} \cdot y^{x,j}_s \cdot f_i'(y_s)+f_i(y)^\sharp_{ts}.
\end{equation}

\smallskip

If we analyze the regularity of the terms of this expression, it seems reasonable to consider the first two terms as elements of order one and $f_i(y)^\sharp$ as an element of order two. Let us make two comments about this intuition:
\begin{itemize}
\item[\textbf{(a)}] To  assert that $B(a_{ts} \otimes \text{Id})(y,f_i'(y))_s$ admits a strictly positive Hölder regularity, otherwise stated to retrieve increments $\lln t-s \rrn^\al$ from the operator $a_{ts}$, we must use the property (\ref{regu-hold-semi}) of the semigroup. It means in particular that a change of space will occur: if $y_s \in \cb_{\al,p}$, then $B(a_{ts} \otimes \text{Id})(y,f_i'(y))_s$ will be estimated as an element of $\cb_p$. This remark also holds for $f_i(y)^{\sharp,3}_{ts}=(X^{ax,j}_{ts}y^{x,j}_s)\cdot f_i'(y_s)$.
\item[\textbf{(b)}] The term $f_i(y)^{\sharp,1}$ is considered as a second order element insofar as it is easily (pointwise) estimated by (a constant times) $|(\der y)_{ts}|^2$. However, as far as the spatial regularity is concerned, this supposes that $f_i(y)^{\sharp,1}$ has to be seen as an element of $\cb_{p/2}$, if $y\in \cb_p$. To go back to the base space $\cb_p$, we shall use the regularization properties (\ref{p/k-p}) of the semigroup, through the operator $X^{x}$ (Hypothesis (\ref{regu-x-x-rough})).
\end{itemize}

\smallskip

Now, inject the decomposition (\ref{decompo-f-deux}) into (\ref{proc-contr-1}) to obtain
\begin{multline}\label{decompo-int-un}
\int_s^t S_{tu} \, dx^i_u \, f_i(y_u)=X^{x,i}_{ts} f_i(y)_s+X^{xa,i}_{ts}(y, f'_i(y))_s+X^{xx,ij}_{ts}(y^{x,j}\cdot f'_i(y))_s\\
+\int_s^t S_{tu} \, dx^i_u \, f_i(y)^\sharp_{us},
\end{multline}
where we have introduced the following operators of order two (which act on some spaces which will be detailed later on):
\begin{equation}\label{heur-x-x-x}
X^{xa,i}_{ts}=\int_s^t S_{tu} \, dx_u^i \, B(a_{us} \otimes \text{Id}) \quad \text{and} \quad X^{xx,ij}_{ts}=\int_s^t S_{tu} \, dx_u^i \, (\der x^j)_{us}.
\end{equation}
A little more specifically, those operators act on couples $(\vp,\psi)$ in some Sobolev type spaces, and
\begin{equation*}
X^{xa,i}_{ts}(\vp,\psi)=\int_s^t S_{tu} \, dx_u^i \, \lc a_{us}(\vp) \cdot \psi\rc 
\quad \text{and} \quad 
X^{xx,ij}_{ts}(\vp)=\int_s^t S_{tu} \, dx_u^i \, (\der x^j)_{us} \lc \vp \rc.
\end{equation*}
Then, since we have assumed that $f_i(y)^\sharp$ admitted a "double" regularity, we can see the residual term $r_{ts}=\int_s^t S_{tu} \, dx^i_u \, f_i(y_u)^\sharp$ as a third order element, whose regularity is expected to be greater than 1 as soon as the Hölder regularity of $x$ is greater than $1/3$. Thus, we are in the same position as in (\ref{dec-ele-inf}), and just as in the latter situation, $r$ will be interpreted thanks to $\Laha$.

\smallskip

In order to compute $\delha r$, rewrite $r$ using (\ref{decompo-int-un}):
$$r_{ts}=\int_s^t S_{tu} \, dx^i_u \, f_i(y_u)-X^{x,i}_{ts}f_i(y_s)-X^{xa,i}_{ts}(y,f'_i(y))_s-X^{xx,ij}_{ts}(y^{x,j}\cdot  f'_i(y))_s.$$
Therefore, with the help of the algebraic formula (\ref{relation-alg-m-l}), we get
\begin{multline*}
(\delha r)_{tus}=X^{x,i}_{tu}\der(f_i(y))_{us}-(\delha X^{xa,i})_{tus}(y, f_i'(y))_s+X^{xa,i}_{tu} \der (y, f'_i(y))_{us}\\
-(\delha X^{xx,ij})_{tus}(y^{x,j}\cdot  f'_i(y))_s+X^{xx,ij}_{tu} \der (y^{x,j}\cdot  f'_i(y))_{us}.
\end{multline*}
Going back to the very definition of $X^{xa,i}$ and $X^{xx,ij}$, it is quite easy to show that the following relations are satisfied whenever $x$ is a smooth function:
\begin{equation}\label{rel-alg-xxa}
(\delha X^{xa,i})_{tus}=X^{xa,i}_{tu}(a_{us} \otimes \text{Id})+X^{x,i}_{tu}(a_{us} \otimes \text{Id}),
\end{equation}
\begin{equation}\label{rel-alg-xxx}
(\delha X^{xx,ij})_{tus}=X^{x,i}_{tu}(\der x^j)_{us} .
\end{equation}
By combining these two relations together with (\ref{decompo-f-deux}), we deduce
\begin{multline}\label{delha-r-un}
(\delha r)_{tus}=X^{x,i}_{tu}(f_i(y)^\sharp_{us})+X^{xa,i}_{tu}((\delha y)_{us} ,f'_i(y_s))+X^{xa,i}_{tu}(y_u ,  \der(f_i'(y))_{us})\\
+X^{xx,ij}_{ts} \der(y^{x,j}\cdot f'_i(y))_{us}:=J_{tus}.
\end{multline}

All the terms of this decomposition are (morally) of order three. Now, remember that we wish to tackle the case $3\ga >1$, so that it seems actually wise to invert $\delha$ at this point, and we get
\begin{equation}\label{int-prov}
\int_s^t S_{tu} \, dx^i_u \, f_i(y_u)=X^{x,i}_{ts} f_i(y_s)+X^{xa,i}_{ts}(y, f'_i(y))_s+X^{xx,ij}_{ts}(y^{x,j}\cdot f_i'(y))_s+\Laha_{ts}(J),
\end{equation}
where $J_{tus}$ is given by (\ref{delha-r-un}). Notice once again that we have obtained a decomposition valid for some smooth functions $x$ and $y$, but this decomposition can now be extended to an irregular situation up to $\ga>1/3$.

\smallskip

In a natural way, we will use (\ref{int-prov}) as the definition of the integral in the prescribed context of a $\ga$-Hölder process with $\ga >1/3$. To conclude this heuristic reasoning, let us summarize the different hypotheses we have (roughly) raised during the procedure:
\begin{itemize}
\item The process $x$ generates four operators $X^{x}$, $X^{ax}$, $X^{xa}$ et $X^{xx}$, which satisfy the algebraic relations (\ref{rel-alg-xx}), (\ref{rel-alg-xxa}) and (\ref{rel-alg-xxx}). As for the Hölder regularity of those operators, $X^{x}$ admits the same regularity as $x$, $X^{xx}$ twice the regularity of $x$, just as $X^{ax}$ and $X^{xa}$ (even if one must change the space one works with, according to the above point \textbf{(a)}).
\item The increments $(\delha y)_{ts}$ can be decomposed as $(\delha y)_{ts}=X^{x}_{ts}y^x_s+y^\sharp_{ts}$, where $y^\sharp$ is twice more regular than $y$. Besides, according to \textbf{(a)} again, the process $y$ must evolve in a space $\cb_{\al,p}$, with $\al >0$. These remarks will give birth to the spaces $\cq_{\al,p}^\ka$.
\item The functions $\sigma_i$ are regular enough (to be precised below).
\end{itemize}

\begin{remark}\label{rk-x-x-x}
If one has a look at the constructions established in \cite{GT}, it seems more natural, at first sight, to search for a decomposition of the integral based on the (twisted) iterated integral
\begin{equation}\label{x-x-x-tilde}
\tilde{X}^{xx,ij}_{ts}:=\int_s^t S_{tu} \, dx^i_u \,B( X^{x,j}_{us} \otimes \mathrm{Id}) =\int_s^t S_{tu} \, dx^i_u \, B \lp \int_s^u S_{uv} \, dx^j_v \otimes \mathrm{Id}\rp ,
\end{equation}
rather than on the area $X^{xx}_{ts}$ we have introduced in (\ref{heur-x-x-x}). In a way, the definition of $\tilde{X}^{xx}_{ts}$ is actually more consistent with the general iteration scheme of the rough path procedure. Nevertheless, when it comes to applying the results to a fBm $x$ (with Hurst index $H\in (1/3,1/2)$) for instance, it seems difficult to justify the existence of the iterated integral (\ref{x-x-x-tilde}). According to our computations, this difficulty is due to a lack of regularity for the term $S_{uv}$ in (\ref{x-x-x-tilde}). Indeed, if one refers to \cite{BJ}, the definition of the integral would require a condition like
$$\cn[S_{uv}-S_{uu};\cl(\cb_{\al,p},\cb_{\al,p})] \lesssim \lln u-v \rrn^\nu,$$
for some $\nu >0$, but this kind of inequality cannot be satisfied in this general form, since the Hölder property (\ref{regu-hold-semi}) of the semigroup requires a change of space. This is why we have turned to a formulation with $X^{xx}_{ts}$, which is made possible by the introduction of the operator $X^{ax}_{ts}$ (defined by (\ref{decompo-x-x-1})) in the decomposition (\ref{decompo-f-un}). As we shall see in Section~\ref{section-appli-mbf-13}, the definition and the estimation of the regularity of $X^{xx}$ are much simpler, since this can be done by means of an integration by parts argument.

\end{remark}

\subsection{Definition of the integral}

In this subsection, we will only make the previous assumptions and constructions more formal. From now on, we fix a coefficient $\ga >1/3$, which (morally) represents the Hölder regularity of the driving process $x$. The definition of the rough path above $x$ associated to the heat equation is then the following:
\begin{hypothesis}\label{hypo-regu}
We assume that the process $x$ allows to define operators $X^{x,i}$, $X^{ax,i}$, $X^{xa,i}$, $X^{xx,ij}$ ($i,j \in \{1,\ldots, N \}$), such that, recalling our Notation \ref{not:bilinear-B}:
\begin{itemize}
\item[(H1)] From an algebraic point of view:
\begin{equation}\label{hypo-alg-rough-x-x}
\delha X^{x,i}=0
\end{equation}
\begin{equation}\label{hypo-alg-rough-x-ax}
X^{x,i}=X^{ax,i}+\der x^i
\end{equation}
\begin{equation}\label{hypo-alg-rough-x-xa}
\delha X^{xa,i}=X^{xa,i}(a \otimes \text{Id})+X^{x,i}(a \otimes \text{Id})
\end{equation}
\begin{equation}\label{hypo-alg-rough-x-xx}
\delha X^{xx,ij}=X^{x,i}(\der x^j).
\end{equation}
\item[(H2)] From an analytical point of view: if $2\al p >n$, then
\begin{equation}\label{regu-x-x-rough}
X^{x,i} \in \cac_2^\ga(\cl(\cb_p,\cb_p))  \cap \cac_2^\ga(\cl(\cb_{\al,p},\cb_{\al,p}))\cap \cac_2^{\ga-n/(2p)}(\cl(\cb_{p/2},\cb_p)) 
\end{equation}
\begin{equation}\label{regu-x-a-x}
X^{ax,i}\in \cac_2^{\ga+\al}(\cl(\cb_{\al,p},\cb_p))
\end{equation}
\begin{equation}\label{regu-x-x-a}
X^{xa,i}\in \cac_2^{\ga+\al-n/(2p)}(\cl(\cb_{\al,p} \times \cb_p,\cb_p)) \cap \cac_2^\ga(\cl(\cb_{\al,p} \times \cb_{\al,p},\cb_{\al,p})) 
\end{equation}
\begin{equation}\label{regu-x-x-x}
X^{xx,ij} \in \cac_2^{2\ga}(\cl(\cb_p ,\cb_p)) \cap \cac_2^{2\ga}(\cl(\cb_{\al,p} ,\cb_{\al,p})) \cap \cac_2^{2\ga}(\cl(\cb_{\al,p} ,\cb_p)).
\end{equation}
\end{itemize}

We will denote by $\textbf{X}=(X^x,X^{ax},X^{xa},X^{xx})$ the path so defined. $\textbf{X}$ belongs to a product of operators spaces, denoted by $\cac \cl^{\ga,\ka,p}$, and furnished with a natural norm build with the norms of each space.
\end{hypothesis}

The formal definition of controlled process takes the following form:
\begin{definition}\label{defi-contr-proc}
For all $\al \in (0,1/2)$, $\ka \in (0,1)$, we define
\begin{multline*}
\hat{\cq}_{\al,p}^\ka=\hat{\cq}_{\al,p}^\ka([0,T])=\{ y \in \cacha_1^\ka([0,T],\cb_{\al,p}): \, (\delha y)_{ts}=X^{x,i}_{ts} y^{x,i}_s+y^\sharp_{ts}, \\
y^{x,i} \in \cac_1^0([0,T],\cb_{\al,p}) \cap \cac_1^\ka([0,T],\cb_p), \, y^\sharp \in \cac_2^\ga([0,T],\cb_{\al,p})\cap \cac_2^{2\ka}([0,T],\cb_p) \}.
\end{multline*}
We will call $\hat{\cq}_{\al,p}^\ka$ the space of $\ka$-controlled processes of $\cb_{\al,p}$, together with the norm
\begin{multline*}
\cn[y;\hat{\cq}_{\al,p}^\ka ] =\cn[y;\cacha_1^\ka(\cb_{\al,p})]+\sum_{i=1}^N \lcl \cn[y^{x,i};\cac_1^0(\cb_{\al,p})]+\cn[y^{x,i};\cac_1^\ka(\cb_p)]\rcl\\
+\cn[y^\sharp;\cac_2^\ga(\cb_{\al,p})]+\cn[y^\sharp;\cac_2^{2\ka}(\cb_p)],
\end{multline*}
where the time interval $[0,T]$ is omitted for sake of clarity.
\end{definition}
Observe that, in what follows, we will only consider the spaces $\hat{\cq}_{\ka,p}^\ka$, with $2\ka p> 1$.

\smallskip

We can now show how nonlinearities of the form given in Hypothesis \ref{def:reg-f} act on a controlled process.
\begin{lemma}\label{lem-dec-f} Assume that $f_i \in \cx_2$ for $i=1,\dots,N$ and let $\ka \in (1/3,\ga)$. If $y\in \hat{\cq}_{\ka,p}^\ka$ admits the decomposition $\delha y=X^{x,i}y^{x,i}+y^\sharp$, then the increment $\der f_i(y)$ can be written as
\begin{equation}\label{decomp-si-i-f}
\der(f_i(y))_{ts}=(a_{ts} \otimes \text{Id})(y,f'_i(y))_s+(\der x^j)_{ts}\cdot (y^{x,j} \cdot f'_i(y))_s+f_i(y)^\sharp_{ts},
\end{equation}
with $f_i(y)^\sharp=f_i(y)^{\sharp,1}+f_i(y)^{\sharp,2}+f_i(y)^{\sharp,3}$, where the elements $f_i(y)^{\sharp,k}$ are given by (\ref{def-exa-f-et-1}) and (\ref{def-exa-f-et-2}). Moreover, one has
\begin{equation}\label{estim-f-sha-1}
\cn[ f_i(y)^{\sharp,1};\cac_2^{2\ka}(\cb_{p/2})]\leq \cfx \lcl \cn[y;\cac_1^0(\cb_{\al,p})]^2+\cn[y;\hat{\cq}_{\ka,p}^\ka]^2 \rcl
\end{equation}
\begin{equation}\label{estim-f-sha-2-3}
\cn[ f_i(y)^{\sharp,2};\cac_2^{2\ka}(\cb_p)] \leq \cfx \, \cn[y;\hat{\cq}_{\ka,p}^\ka]  \quad , \quad \cn[ f_i(y)^{\sharp,3};\cac_2^{2\ka}(\cb_p)] \leq \cfx \, \cn[y;\hat{\cq}_{\ka,p}^\ka].
\end{equation}
\end{lemma}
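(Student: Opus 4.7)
The decomposition formula itself is essentially carried out at the heuristic level in \eqref{decompo-f-un} of Section~\ref{section-rough-heuri}; my plan is to repeat those four manipulations rigorously and then devote most of the work to the three norm estimates.

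\textbf{Step 1 (Algebraic decomposition).} Since $f_i\in\cx_2$, the function $\eta\mapsto\si_i(\xi,\eta)$ is $C^2$ with bounded derivatives uniformly in $\xi$. A pointwise first order Taylor expansion yields
\[
\der(f_i(y))_{ts}(\xi) = f_i'(y_s)(\xi)\cdot (\der y)_{ts}(\xi) + f_i(y)^{\sharp,1}_{ts}(\xi),
\]
with $f_i(y)^{\sharp,1}$ as in \eqref{def-exa-f-et-1}. I would then split $(\der y)_{ts}=(\delha y)_{ts}+a_{ts}y_s$ to bring out $(a_{ts}\otimes\Id)(y,f_i'(y))_s$, insert the controlled decomposition $(\delha y)_{ts}=X^{x,j}_{ts}y^{x,j}_s+y^\sharp_{ts}$ (producing $f_i(y)^{\sharp,2}$), and finally split $X^{x,j}=X^{ax,j}+\der x^j$ via \eqref{hypo-alg-rough-x-ax} (producing $f_i(y)^{\sharp,3}$). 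Collecting terms gives exactly \eqref{decomp-si-i-f}.

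\textbf{Step 2 (Estimate for $f_i(y)^{\sharp,1}$).} Since $f_i''$ is bounded, the Taylor remainder satisfies the pointwise estimate $|f_i(y)^{\sharp,1}_{ts}(\xi)|\lesssim_{f} |(\der y)_{ts}(\xi)|^2$, so
\[
\cn[f_i(y)^{\sharp,1}_{ts};\cb_{p/2}]\lesssim_f \cn[(\der y)_{ts};\cb_p]^2.
\]
Using once more $(\der y)_{ts}=(\delha y)_{ts}+a_{ts}y_s$, the first term is bounded in $\cb_p$ by $|t-s|^\ka\cn[y;\cacha_1^\ka(\cb_{\ka,p})]$ (thanks to the continuous embedding $\cb_{\ka,p}\hookrightarrow\cb_p$), while the Hölder property \eqref{regu-hold-semi} of the semigroup yields $\cn[a_{ts}y_s;\cb_p]\le c|t-s|^\ka \cn[y;\cac_1^0(\cb_{\ka,p})]$. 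Squaring and summing gives~\eqref{estim-f-sha-1}.

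\textbf{Step 3 (Estimates for $f_i(y)^{\sharp,2}$ and $f_i(y)^{\sharp,3}$).} For $f_i(y)^{\sharp,2}=y^\sharp\cdot f_i'(y)$, I use Notation~\ref{not:bilinear-B} together with the elementary bound $\cn[\vp\cdot\psi;\cb_p]\le \cn[\vp;\cb_p]\cn[\psi;\cb_\infty}$. Since $f_i\in\cx_2$, the function $f_i'(y_s)$ is bounded in $\cb_\infty$ uniformly in $s$, so
\[
\cn[f_i(y)^{\sharp,2}_{ts};\cb_p]\lesssim_f \cn[y^\sharp_{ts};\cb_p]\lesssim_f |t-s|^{2\ka}\,\cn[y;\hq_{\ka,p}^\ka],
\]
where the $2\ka$ exponent comes from the component $\cn[y^\sharp;\cac_2^{2\ka}(\cb_p)]$ of the controlled norm. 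For $f_i(y)^{\sharp,3}=(X^{ax,j}y^{x,j})\cdot f_i'(y)$, the regularity \eqref{regu-x-a-x} with $\al=\ka$ gives
\[
\cn[X^{ax,j}_{ts}y^{x,j}_s;\cb_p]\le c_{\mathbf X}\,|t-s|^{\ga+\ka}\cn[y^{x,j};\cac_1^0(\cb_{\ka,p})],
\]
and multiplying by the $\cb_\infty$-bounded factor $f_i'(y_s)$ yields the required $|t-s|^{2\ka}$ bound (since $\ga>\ka$ implies $\ga+\ka>2\ka$).

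\textbf{Main obstacle.} The only delicate point is the bookkeeping of function spaces in Step~2: the quadratic remainder $f_i(y)^{\sharp,1}$ naturally lives in $\cb_{p/2}$ rather than $\cb_p$, and the Hölder exponent $2\ka$ must be recovered by combining a $\ka$-regularity coming from $\delha y$ (measured in the $\cb_{\ka,p}$-norm) with a $\ka$-regularity coming from the semigroup increment $a_{ts}$ acting on $y_s\in\cb_{\ka,p}$. The later return to $\cb_p$ (needed in Section~4 when the integral is closed up via $X^{xx}$) will rely on the smoothing estimate \eqref{p/k-p}, but that is beyond the scope of this lemma.
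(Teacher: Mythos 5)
Your proposal is correct and follows essentially the same route as the paper's own proof: the algebraic identity is exactly the expansion \eqref{decompo-f-un}, the bound on $f_i(y)^{\sharp,1}$ proceeds via the pointwise quadratic estimate on the Taylor remainder, the splitting $\der y=\delha y+a\,y$, and the semigroup H\"older property \eqref{regu-hold-semi}, while $f_i(y)^{\sharp,2}$ and $f_i(y)^{\sharp,3}$ are handled via the $\cb_\infty$-bound on $f_i'$, the controlled-process norm, and the regularity assumption \eqref{regu-x-a-x} on $X^{ax}$. The only thing to flag is cosmetic: you silently absorb a factor $T^{\ga-\ka}$ into $c_{f,\mathbf X}$ when passing from the exponent $\ga+\ka$ to $2\ka$ for $f_i(y)^{\sharp,3}$, which matches the paper's (implicit) convention, and the paper's right-hand side of \eqref{estim-f-sha-1} should read $\cb_{\ka,p}$ rather than $\cb_{\al,p}$, as you correctly use.
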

\begin{proof}
This refers to the decomposition (\ref{decompo-f-un}). The estimate of $f_i(y)^{\sharp,2}$ is obvious, while the estimate of $f_i(y)^{\sharp,3}$ stems from the hypothesis (\ref{regu-x-a-x}).
As for $f_i(y)^{\sharp,1}$, notice that
$$\norm{f_i(y)_{ts}^{\sharp,1}}_{\cb_{p/2}} \lesssim c_f \norm{(\der y)_{ts}^2 }_{\cb_{p/2}}\lesssim \norm{(\der y)_{ts} }_{\cb_{p}}^2\lesssim \norm{(\delha y)_{ts} }_{\cb_{p}}^2+\norm{a_{ts}y_s}_{\cb_p}^2,$$
and the result then comes from the property (\ref{regu-hold-semi}).

\end{proof}

We are now in position to justify the use of (\ref{int-prov}) as a definition for the integral:

\begin{proposition}\label{defi-rig-int}
Let $y\in \hat{\cq}_{\ka,p}^\ka([0,T])$ admitting the decomposition $\delha y=X^{x,i}y^{x,i}+y^\sharp$, with $\ka \in (1/3,\ga)$ and $p\in \N^\ast$ such that $\ga-\ka > n/(2p)$. Assume that $f=(f_1,\ldots,f_N)$ with $f_i \in \mathcal{X}_2$ for $i=1,\ldots,N$. We set, for all $s<t$,
\begin{equation}\label{dec-int-rough}
\cj_{ts}(\hat{d}x \, f(y))=X^{x,i}_{ts}f_i(y_s)+X^{xa,i}_{ts}(y,f'_i(y))_s+X^{xx,ij}_{ts}(y^{x,j}\cdot  f'_i(y))_s+\Laha_{ts}(J),
\end{equation}
where we recall our Notation \ref{not:f-prime} for $f'_i$, and with
\begin{multline}
J_{tus}=X^{x,i}_{tu}(f_i(y)^\sharp_{us})+X^{xa,i}_{tu}((\delha y)_{us} ,f_i'(y_s))+X^{xa,i}_{tu}(y_u , \der(f'_i(y))_{us})\\
+X^{xx,ij}_{ts} \der(y^{x,j} \cdot f'_i(y))_{us},
\end{multline}
the term $f(y)^\sharp$ being defined by the decomposition (\ref{decomp-si-i-f}).
Then one has:
\begin{enumerate}
\item $\cj(\hat{d}x \, f(y))$ is well-defined and there exists $z\in \cq_{\ka,p}^\ka([0,T])$ such that $\delha z$ is equal to the increment $\cj(\hat{d}x \, f(y))$. Furthermore, for any $0\le s<t\le T$, the integral $\cj_{ts}(\hat{d}x \, f(y))$ coincides with a Riemann type integral for two regular functions $x$ and $y$.
\item The following estimation holds true
\begin{equation}\label{estim-int-rough}
\cn[z;\cq_{\ka,p}^\ka([0,T])] \leq \cfx \lcl 1+\cn[y;\cac_1^0(\cb_{\ka,p})]^2+T^\al \cn[y;\cq_{\ka,p}^\ka]^2 \rcl,
\end{equation}
for some $\al >0$.
\item For all $s<t$,
\begin{multline}\label{dec-int-rough-lim}
\cj_{ts}(\hat{d}x \, f(y))=\lim_{|\Delta_{[s,t]}| \to 0} \sum_{(t_k)\in \Delta_{[s,t]}} \big\{ X^{x,i}_{t_{k+1}t_k}f_i(y_{t_k}) +X^{xa,i}_{t_{k+1}t_k}(y,f'_i(y))_{t_k}\\
+X^{xx,ij}_{t_{k+1}t_k}(y^{x,j},f_i'(y))_{t_k} \big\},
\end{multline}
where the limit is taken over partitions $\Delta_{[s,t]}$ of the interval $[s,t]$, as their mesh tends to 0.
\end{enumerate}
\end{proposition}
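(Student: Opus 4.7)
The plan is to give rigorous content to the heuristic derivation of Section~\ref{section-rough-heuri}. Setting
\[
M_{ts} := X^{x,i}_{ts}f_i(y_s) + X^{xa,i}_{ts}(y,f'_i(y))_s + X^{xx,ij}_{ts}(y^{x,j}\cdot f'_i(y))_s,
\]
so that the proposed formula reads $\cj_{ts}(\hat{d}x\, f(y)) = M_{ts} + \Laha_{ts}(J)$, I proceed in five steps: (i) establish the algebraic identity $\delha M = -J$, which in particular forces $J \in \mathrm{Im}\,\delha \subset \mathrm{Ker}\,\delha$; (ii) verify the analytical regularity $J \in \cac_3^\mu([0,T];\cb_p)$ for some $\mu>1$; (iii) apply Theorem~\ref{existence-laha} to define $\Laha J$; (iv) observe that $\delha \cj = \delha M + \delha \Laha J = -J + J = 0$, so that $\cj = \delha z$ for a unique $z$ with $z_0=0$; (v) identify the controlled-process decomposition of $z$ and deduce the estimate (\ref{estim-int-rough}).

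\smallskip

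Step (i) is where I expect the main difficulty to lie. Using the Leibniz rule (\ref{relation-alg-m-l}) and the structural relations (\ref{hypo-alg-rough-x-x})-(\ref{hypo-alg-rough-x-xx}), the term $\delha X^{x,i}\cdot f_i(y)$ vanishes, $\delha X^{xa,i}_{tus}(y,f'_i(y))_s$ produces $X^{xa,i}_{tu}(a_{us}y_s,f'_i(y_s)) + X^{x,i}_{tu}(a_{us}y_s\cdot f'_i(y_s))$, and $\delha X^{xx,ij}_{tus}(y^{x,j}\cdot f'_i(y))_s$ produces $X^{x,i}_{tu}\der x^j_{us}(y^{x,j}_s\cdot f'_i(y_s))$. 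Substituting the decomposition (\ref{decomp-si-i-f}) of $\der(f_i(y))_{us}$ into $-X^{x,i}_{tu}\der(f_i(y))_{us}$ cancels both $X^{x,i}$-contributions exactly, and expanding $\der(y,f_i'(y))_{us} = (\der y_{us},f_i'(y_s)) + (y_u,\der f'_i(y)_{us})$ together with $\der y = \delha y + a y$ regroups the remaining $X^{xa,i}$-pieces into the two $X^{xa,i}$-terms of $-J$. This cancellation is precisely why the correction $X^{ax,i} = X^{x,i} - \der x^i$ (cf.~Remark~\ref{rk-x-x-x}) was introduced in $f_i(y)^{\sharp,3}$ and why $a_{us}y_s$ was isolated in (\ref{decomp-si-i-f}); with any other arrangement the $a$-terms would not match.

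\smallskip

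For step (ii) I bound each of the four constituents of $J$ separately. The term $X^{x,i}_{tu}(f_i(y)^\sharp_{us})$ has combined H\"older exponent at least $(\ga - n/(2p)) + 2\ka$, using the smoothing $X^{x,i}\in\cac_2^{\ga-n/(2p)}(\cl(\cb_{p/2},\cb_p))$ from (\ref{regu-x-x-rough}) together with (\ref{estim-f-sha-1})-(\ref{estim-f-sha-2-3}); the two $X^{xa,i}$-contributions combine (\ref{regu-x-x-a}) with the $\ka$-regularity of $\delha y$ in $\cb_{\ka,p}$ and the Lipschitz regularity of $f_i'(y)$ inherited from $f_i\in\cx_2$; and the last term uses (\ref{regu-x-x-x}) together with the algebra property of $\cb_{\ka,p}$ (valid since $2\ka p > n$) to place $\der(y^{x,j}\cdot f'_i(y))$ in $\cb_{\ka,p}$ with H\"older exponent $\ka$. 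The standing assumptions $\ga-\ka > n/(2p)$ and $\ka > 1/3$ together yield $\ga + 2\ka > 1 + n/(2p)$, so all four exponents exceed $1$ and Theorem~\ref{existence-laha} applies.

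\smallskip

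For step (v) I read off $z^{x,i} = f_i(y)$ and $z^\sharp_{ts} = X^{xa,i}_{ts}(y,f'_i(y))_s + X^{xx,ij}_{ts}(y^{x,j}\cdot f'_i(y))_s + \Laha_{ts}(J)$. The norms $\cn[f_i(y);\cac_1^0(\cb_{\ka,p})]$ and $\cn[f_i(y);\cac_1^\ka(\cb_p)]$ are controlled via Proposition~\ref{prop-stri} and a Taylor expansion using $(\der y)_{ts} = (\delha y)_{ts} + a_{ts}y_s$ together with (\ref{regu-hold-semi}); the two required norms of $z^\sharp$ follow from (\ref{regu-x-x-a})-(\ref{regu-x-x-x}) and the contraction estimate (\ref{contraction-laha}) applied to $\Laha J$ in both $\cb_{\ka,p}$ and $\cb_p$, echoing step (ii). Factoring out a $T^\al$ from the excess in the H\"older exponents produces (\ref{estim-int-rough}). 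Finally, the Riemann-sum representation (\ref{dec-int-rough-lim}) follows from Proposition~\ref{lien-laha-int} applied to the $2$-increment $g_{ts} := X^{x,i}_{ts}f_i(y_s) + X^{xa,i}_{ts}(y,f'_i(y))_s + X^{xx,ij}_{ts}(y^{x,j}\cdot f'_i(y))_s$, whose $\delha$-image equals $J$ by step (i); and the coincidence of $\cj_{ts}(\hat{d}x\,f(y))$ with the Riemann-Lebesgue integral when $x,y$ are smooth is exactly the derivation (\ref{proc-contr-1})-(\ref{int-prov}).
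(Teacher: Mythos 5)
Your proposal is correct and follows the same route as the paper: you establish the algebraic identity $\delha M=-J$ via the Leibniz rule \eqref{relation-alg-m-l} and the relations (H1), check that $J\in\cac_3^\mu(\cb_p)$ with $\mu=\ga+2\ka-n/(2p)>1$ term-by-term using (H2) and Lemma~\ref{lem-dec-f}, invoke Theorem~\ref{existence-laha}, read off the controlled decomposition $z^{x,i}=f_i(y)$, and recover the Riemann-sum representation from $\cj=(\id-\Laha\delha)M$ together with Proposition~\ref{lien-laha-int} — exactly the paper's argument, spelled out in somewhat more detail (in particular you make explicit the cancellation mechanism behind $\delha M=-J$, which the paper compresses into ``it is easy to show''). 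One cosmetic slip: in your last paragraph you say the $\delha$-image of $g=M$ ``equals $J$'', whereas by your own step (i) it equals $-J$; this sign does not affect the argument since Proposition~\ref{lien-laha-int} only needs $\delha g\in\cac_3^\mu$, but the phrasing should be corrected.
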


\begin{proof}
The fact that $\cj_{ts}(\hat{d}x \, f(y))$ coincides with a Riemann type integral for two regular functions $x$ and $y$ is just what has been derived at equation (\ref{int-prov}). As far as the second claim of our proposition is concerned, it is a direct consequence of Hypotheses~\ref{def:reg-f} and~\ref{hypo-regu}, together with the estimations of Lemma \ref{lem-dec-f}. Let us check for instance the regularity of $J$:
\begin{itemize}
\item for $X^{x,i}(f_i(y)^\sharp)$, we get, by (\ref{regu-x-x-rough}) and (\ref{estim-f-sha-2-3}), 
$$\cn[X^{x,i}( f_i(y)^{\sharp,2}+f_i(y)^{\sharp,3} );\cac_3^{\ga+2\ka}(\cb_p)] \leq \cfx \, \cn[y;\Qha^\ka_{\ka,p}],$$
while, owing to (\ref{regu-x-x-rough}) and (\ref{estim-f-sha-1}),
$$\cn[X^{x,i} f_i(y)^{\sharp,1};\cac_3^{\ga+2\ka-n/(2p)}(\cb_p)] \leq \cfx \lcl \cn[y;\cac_1^0(\cb_{\ka,p})]^2+\cn[y;\Qha^\ka_{\ka,p}]\rcl.$$
\item for $X^{xa,i}((\delha y),f_i'(y))$, the hypothesis (\ref{regu-x-x-a}) gives
$$\cn[X^{xa,i}((\delha y),f_i'(y));\cac_3^{\ga+2\ka-n/(2p)}(\cb_p)] \leq \cfx \, \cn[y;\cacha_1^\ka(\cb_{\ka,p})] \leq \cfx \, \cn[y;\Qha^\ka_{\ka,p}].$$
\item for $X^{xa,i}(y,\der(f'_i(y)))$, one has, by (\ref{regu-x-x-a}) again,
\bean
\lefteqn{\cn[X^{xa,i}(y, \der(f'_i(y)));\cac_3^{\ga+2\ka-n/(2p)}(\cb_p)]}\\
&\leq & \cfx \, \cn[y;\cac_1^0(\cb_{\ka,p})] \cn[y;\cac_1^\ka(\cb_p)]\\
& \leq & \cfx \, \cn[y;\cac_1^0(\cb_{\ka,p})] \lcl \cn[y;\cac_1^0(\cb_{\ka,p})]+\cn[y;\Qha^\ka_{\ka,p}]\rcl.
\eean
\item for $X^{xx,ij}\der(y^{x,j}\cdot  f'_i(y))$, we deduce from (\ref{regu-x-x-x}) that
\bean
\lefteqn{\cn[X^{xx,ij}\der(y^{x,j}\cdot  f'_i(y));\cac_3^{2\ga+\ka}(\cb_p)]}\\
&\leq & \cfx \lcl \cn[y^{x,j};\cac_1^\ka(\cb_p)] +\cn[y^{x,j};\cac_1^0(\cb_{\ka,p})] \cn[y;\cac_1^\ka(\cb_p)] \rcl \\
&\leq & \cfx \lcl 1+\cn[y;\cac_1^0(\cb_{\ka,p})]^2+\cn[y;\Qha^\ka_{\ka,p}]^2 \rcl.
\eean
\end{itemize}
Moreover, thanks to the algebraic relations stated in Hypothesis \ref{hypo-regu} and the decomposition~(\ref{decomp-si-i-f}), it is easy to show that
$$J=-\delha \lp X^{x,i}(f_i(y))+X^{xa,i}(y,f_i'(y))+X^{xx,ij} (y^{x,j}\cdot  f'_i(y)) \rp.$$
Therefore, $J \in \text{Ker} \, \delha \cap \cac_3^\mu(\cb_p)$, with $\mu=\ga+2\ka-n/(2p) >1$, and we are allowed to apply $\Laha$. Besides, using the contraction property (\ref{contraction-laha}), we get
$$\cn[\Laha(J);\cac_2^{\ga+2\ka-n/(2p)}(\cb_p)] \leq \cfx \lcl 1+\cn[y;\cac_1^0(\cb_{\ka,p})]^2+\cn[y;\Qha^\ka_{\ka,p}]^2 \rcl,$$
and also
$$\cn[\Laha(J);\cac_2^{\ga+\ka-n/(2p)}(\cb_{\ka,p})] \leq \cfx \lcl 1+\cn[y;\cac_1^0(\cb_{\ka,p})]^2+\cn[y;\Qha^\ka_{\ka,p}]^2 \rcl.$$
The regularity of the other terms of (\ref{dec-int-rough}) can be proved with similar arguments. As for the expression (\ref{dec-int-rough-lim}), it is a consequence of Proposition \ref{lien-laha-int}, since one can write
$$\cj(\hat{d}x \, f(y))=\lp \id-\Laha \delha \rp \lp X^{x,i}(f_i(y))+X^{xa,i}(y,f_i'(y))+X^{xx,ij} (y^{x,j}\cdot f'_i(y)) \rp.$$

\end{proof}

\smallskip

Once our integral for controlled processes is defined, the existence and uniqueness of a local solution for our equation is easily proved:
\begin{theorem}\label{theo-cas-rough} Assume that $f=(f_1,\ldots,f_N)$ with $f_i \in \mathcal{X}_3$ for $i=1,\ldots,N$.
For any pair $(\ka,p) \in (1/3,\ga) \times \N$ such that $\ga-\ka >n/(2p)$, there exists a time $T >0$ for which the system
\begin{equation}\label{formu-ori}
(\delha y)_{ts}=\cj_{ts}(\hat{d}x \, f(y)) \quad , \quad y_0=\psi \in \cb_p,
\end{equation}
interpreted with Proposition \ref{defi-rig-int}, admits a unique solution $y$ in $\cq_{\ka,p}^\ka([0,T])$. 
\end{theorem}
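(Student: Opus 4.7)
The plan is to perform a Picard iteration in the space $\cq_{\ka,p}^\ka([0,T])$ for $T$ small enough, along the same lines as the proof of Theorem \ref{theo-cas-young}. Define the map $\Gamma$ by $\Gamma(y) = z$, where $z_0 = \psi$ and $(\delha z)_{ts} = \cj_{ts}(\hat{d}x\, f(y))$, the right-hand side being given by Proposition \ref{defi-rig-int}. That proposition shows that, whenever $y \in \cq_{\ka,p}^\ka([0,T])$, the image $z$ is again a controlled process with canonical decomposition $z^{x,i} = f_i(y)$ and residual $z^\sharp = X^{xa,i}(y,f_i'(y)) + X^{xx,ij}(y^{x,j}\cdot f_i'(y)) + \Laha(J)$; moreover, the a priori estimate (\ref{estim-int-rough}) supplies the quadratic control in $\cn[y;\cq_{\ka,p}^\ka]$ needed to close the argument.

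The first step is invariance of a ball
\[
B_T^R = \{ y \in \cq_{\ka,p}^\ka([0,T]) : y_0 = \psi,\ \cn[y;\cq_{\ka,p}^\ka] \leq R \}.
\]
Since $y_s = (\delha y)_{s0} + S_s \psi$ and $S$ is a contraction on $\cb_{\ka,p}$, one has $\cn[y;\cac_1^0(\cb_{\ka,p})] \leq \norm{\psi}_{\cb_{\ka,p}} + T^\ka R$, so that (\ref{estim-int-rough}) entails $\cn[\Gamma(y);\cq_{\ka,p}^\ka] \leq \cfx \{ 1 + \norm{\psi}_{\cb_{\ka,p}}^2 + (T^{2\ka} + T^\al) R^2 \}$. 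Choosing $R \sim 2\cfx(1 + \norm{\psi}_{\cb_{\ka,p}}^2)$ and then $T$ small enough to force the $R^2$ contribution below $R/2$ gives $\Gamma(B_T^R) \subset B_T^R$.

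The core of the proof is the contraction estimate. For $y, \tilde y \in B_T^R$, the difference $\Gamma(y) - \Gamma(\tilde y)$ is obtained by subtracting the two copies of the decomposition (\ref{dec-int-rough}); the linearity of $X^{x,i}$, $X^{xa,i}$, $X^{xx,ij}$ and of $\Laha$ turns each term into a difference of building blocks. The quantities $f_i(y) - f_i(\tilde y)$ and $f_i'(y) - f_i'(\tilde y)$ are controlled in the several H\"older norms listed in Definition \ref{defi-contr-proc} by standard mean-value arguments, with the Sobolev embedding $\cb_{\ka,p} \hookrightarrow \cb_\infty$ (ensured by $2\ka p > n$) used to absorb supremum norms, exactly as in the Young proof. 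The more delicate ingredient is $f_i(y)^\sharp - f_i(\tilde y)^\sharp$, where the remainders are defined by (\ref{def-exa-f-et-1})--(\ref{def-exa-f-et-2}): to compare the piece $f_i(y)^{\sharp,1}$ with its $\tilde y$ counterpart a first-order Taylor expansion of $f_i'$ around $\tilde y$ is needed, which is exactly why the hypothesis $f_i \in \cx_3$ is required. Reassembling the contributions and invoking once more the contraction property (\ref{contraction-laha}) of $\Laha$ will produce a bound of the form
\[
\cn[\Gamma(y) - \Gamma(\tilde y);\cq_{\ka,p}^\ka] \leq \cfx (1 + R)^2 T^\al \cn[y - \tilde y;\cq_{\ka,p}^\ka]
\]
for some $\al > 0$. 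A final shrinking of $T$ delivers a contraction factor $1/2$, and Banach's theorem then supplies the unique fixed point in $B_T^R$.

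The main obstacle is precisely this contraction step: unlike the invariance bound (\ref{estim-int-rough}), which is already available, no comparable estimate for $\cj(\hat{d}x\, f(y)) - \cj(\hat{d}x\, f(\tilde y))$ has been proved, so every term of (\ref{dec-int-rough}) together with each summand of the three-variable remainder $J$ has to be revisited with a difference of two controlled processes. The book-keeping is routine in the rough paths tradition, but it must be carried out carefully because of the changes of base space ($\cb_{p/2} \to \cb_p$) triggered by the pointwise products appearing in $f_i(y)^{\sharp,1}$: this is precisely what forces the use of the $\cac_2^{\ga - n/(2p)}(\cl(\cb_{p/2},\cb_p))$ component of (\ref{regu-x-x-rough}) and, in turn, why the strict inequality $\ga - \ka > n/(2p)$ is imposed, its role being to convert the spatial loss into a net positive power $T^\al$ available for shrinking.
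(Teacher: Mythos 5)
Your proposal is correct and follows exactly the route the paper takes: the printed proof of Theorem \ref{theo-cas-rough} is merely the sentence ``standard fixed-point argument leaning on (\ref{estim-int-rough}), see \cite{NNT}'', so your Picard iteration in $\cq_{\ka,p}^\ka([0,T])$ is the intended argument, only written out. You correctly identify the decomposition $z^{x,i}=f_i(y)$ of the image, the invariance bound obtained by inserting $\cn[y;\cac_1^0(\cb_{\ka,p})]\le \norm{\psi}_{\cb_{\ka,p}}+T^\ka R$ into (\ref{estim-int-rough}), and — most usefully — the fact that the Lipschitz analogue of (\ref{estim-int-rough}) is not stated in the paper and has to be re-derived term by term; your diagnosis that comparing $f_i(y)^{\sharp,1}$ with $f_i(\tilde y)^{\sharp,1}$ is what forces $f_i\in\cx_3$ (an extra derivative of $f_i'$ beyond what Proposition \ref{defi-rig-int} needs) is precisely right and matches the hypothesis upgrade from $\cx_2$ to $\cx_3$ between the two statements.
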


\begin{proof}
This local solution is obtained via a standard fixed-point argument in the space of controlled processes. The procedure essentially leans on the estimation
(\ref{estim-int-rough}). The interested reader can refer to \cite{NNT} for further details on the principle of the proof.

\end{proof}

\section{Global solution under stronger regularity assumptions}
\label{sec:glob-sol}

The aim of this section is to show that a regularization in the nonlinearity involved in our heat equation can yield a global solution. Specifically,
this section is devoted to the proof of the existence and uniqueness of a global solution to the (slightly) modified system
\begin{equation}\label{eq:reg-system}
(\delha y)_{ts}=\int_s^t S_{tu} \, dx_u^{(i)} \, S_\ep f_i(y_u) \quad , \quad y_0=\psi,
\end{equation}
where $f_i \in \mathcal{X}_3$, $\psi \in \cb_{\al,p}$ for some $\al \geq 0$ to be precised, and $\ep$ is a strictly positive fixed parameter. Owing to the regularizing effect of $S_\ep$, we will see that such a system is much easier to handle than the original formulation (\ref{formu-ori}).

\smallskip

Note that we have chosen a regularization by $S_\ep$ in (\ref{eq:reg-system}), in order to be close to Teichmann's framework \cite{Tei}. However, it will be clear from the considerations below that an extension to a convolutional nonlinearity of the form
$$
[\tilde f_i(y)](\xi)=\int_{\R^n} K(\xi,\eta) \, f_i(y(\eta)) \, d\eta, \quad \xi\in\R^n,
$$
with a smooth enough kernel $K$, is possible.
The technical argument which enables to extend the local solution into a global one are taken from a previous work of two of the authors \cite{DTbis}.

\subsection{Heuristic considerations}

The regularizing property (\ref{regu-prop-semi}) of the semigroup $S_\ep$ allows us to turn to a decomposition of $\int_s^t S_{tu} \, dx_u^{(i)} \, S_\ep f_i(y_u)$ similar to the finite-dimensional case, or otherwise stated written without the help of the mixed operator $X^{xa}$. Indeed, let us go back to the decomposition (\ref{decompo-f-un}):
\begin{multline}\label{decompo-f-un-glo}
\der(f_i(y))_{ts}=(\der x)_{ts}y^x_s \cdot f_i'(y_s)
+\bigg[ a_{ts}y_s \cdot f'_i(y_s) \\+y^\sharp_{ts} \cdot f_i'(y_s)+(X^{ax,i}_{ts} y^{x,i}_s) \cdot f_i'(y_s) +\int_0^1 dr \, \lc f_i'(y_s+r(\der y)_{ts})-f_i'(y_s) \rc \cdot (\der y)_{ts}\bigg],
\end{multline}
but this time, let us consider the whole term into brackets as a remainder term evolving in $\cb_p$ (or maybe $\cb_{p/2}$), and denote it by $f_i(y)^\sharp$. This point of view is for instance justified if we let the process $y$ evolve in $\cb_{1,p}$, insofar as, for any $s,t \in I$,
$$\norm{a_{ts} y_s \cdot f_i'(y_s)}_{\cb_p} \lesssim \lln t-s \rrn \norm{f_i'}_\infty \norm{y_s }_{\cb_{1,p}} \lesssim \lln t-s \rrn^{2\ka} \lln I \rrn^{1-2\ka} \norm{f_i'}_\infty \norm{y_s }_{\cb_{1,p}}.$$
For obvious stability reasons, the strong assumption $y_s \in \cb_{1,p}$ then implies that the residual term steming from the decomposition of $\int_s^t S_{tu} \, dx_u^{(i)} \, S_\ep f_i(y_u)$ should also be seen as an element of $\cb_{1,p}$. This is made possible through the action of $S_\ep$. Indeed, owing to (\ref{regu-prop-semi}), one has
$$\norm{S_\ep(f(y)^\sharp)}_{\cb_{1,p}} \leq c \, \ep^{-1}  \,  \norm{f(y)^\sharp}_{\cb_p}, \quad \text{for some constant} \ c>0.$$

\subsection{Definition of the integral}
According to the above considerations, only the processes $X^{x,i}$, $X^{ax,i}$ and $X^{xx,i}$ will come into play. Therefore, let us focus on the following simplified version of Hypothesis \ref{hypo-regu}:

\begin{hypothesis}\label{hypo-regu-glo}
We assume that the process $x$ allows to define operators $X^{x,i}$, $X^{ax,i}$, $X^{xx,ij}$ ($i,j \in \{1,\ldots, N \}$), such that, recalling our Notation \ref{not:bilinear-B}:
\begin{itemize}
\item[(H1)] From an algebraic point of view:
\begin{equation}\label{hypo-alg-rough-x-x-glo}
\delha X^{x,i}=0
\end{equation}
\begin{equation}\label{hypo-alg-rough-x-ax-glo}
X^{x,i}=X^{ax,i}+\der x^i
\end{equation}
\begin{equation}\label{hypo-alg-rough-x-xx-glo}
\delha X^{xx,ij}=X^{x,i}(\der x^j).
\end{equation}
\begin{equation}\label{hypo-s-ep}
\text{The operators $X^{x,i}$ and $X^{xx,ij}$ commute with $S_\ep$.}
\end{equation}
\item[(H2)] From an analytical point of view: 
\begin{equation}\label{regu-x-x-rough-glo}
X^{x,i} \in \cac_2^\ga(\cl(\cb_p,\cb_p))  \cap \cac_2^\ga(\cl(\cb_{1,p},\cb_{1,p}))\cap \cac_2^{\ga-n/(2p)}(\cl(\cb_{p/2},\cb_p)) 
\end{equation}
\begin{equation}\label{regu-x-a-x-glo}
X^{ax,i}\in \cac_2^{1+\ga}(\cl(\cb_{1,p},\cb_p))
\end{equation}
\begin{equation}\label{regu-x-x-x-glo}
X^{xx,ij} \in \cac_2^{2\ga}(\cl(\cb_p ,\cb_p)) \cap \cac_2^{2\ga}(\cl(\cb_{1,p} ,\cb_{1,p})) .
\end{equation}
\end{itemize}

\end{hypothesis}

\begin{remark}
The assumption (\ref{hypo-s-ep}) is trivially met when $x$ is a differentiable process and $X^{x,i}$ is defined by $X^{x,i}_{ts}=\int_s^t S_{tu} \, dx_u^{(i)}$. It will remain true in rough cases, following the constructions of Section \ref{section-appli-mbf-13}. This commutativity property will be resorted to in the proofs of Propositions \ref{defi-int-glo} and \ref{contr-arg-glo}.
\end{remark}

The notion of controlled processes which has been introduced in Definition \ref{defi-contr-proc} can also be simplified in this context:

\begin{definition}
For any $\ka < \ga$, let us define the space
$$\tilde{Q}_{\ka,p}=\lcl y \in \cac_1^\ga(\cb_{1,p}): \ (\delha y)_{ts}=X^{x,i}_{ts} y^{x,i}_s+y^\sharp_{ts}, \ y^{x,i} \in \cac_1^\ka(\cb_{1,p}) \cap \cac_1^0(\cb_{1,p}), \ y^\sharp \in \cac_2^{2\ka}(\cb_{1,p}) \rcl,$$
together with the seminorm 
$$\cn[y;\tilde{Q}_{\ka,p}]=\cn[y^{x,i};\cac_1^0(\cb_{1,p})]+\cn[y^{x,i};\cac_1^\ka(\cb_{1,p})]+\cn[y^\sharp;\cac_2^{2\ka}(\cb_{1,p})].$$
\end{definition}

With this notation, one has $\cn[y;\cac_1^\ga(\cb_{1,p})] \leq c_x \, \cn[y;\tilde{Q}_{\ka,p}]$.

\smallskip

In the following two propositions, let us fix an interval $I=[a,b]$ and denote $\lln I \rrn=b-a$.

\begin{proposition}\label{defi-int-glo}
Let $y\in \tilde{Q}_{\ka,p}(I)$ with decomposition $\delha y=X^{x,i} y^{x,i}+y^\sharp$, for some $(\ka,p) \in (1/3,\ga ) \times \N^\ast$ such that $\ga -\ka > n/(2p)$ and initial value  $h=y_a \in \cb_{1,p}$. For any $\psi \in \cb_{1,p}$, define a process $z$ by the two relations: $z_a=\psi$ and for any $s<t \in I$,
\begin{multline*}
(\delha z)_{ts}=\cj_{ts}(\hat{d}x^{(i)} \, S_\ep f_i(y_s))=X^{x,i}_{ts} S_\ep f_i(y_s)+X^{xx,ij}_{ts} S_\ep(y^{x,j}_s \cdot f_i'(y_s))\\
+\Laha_{ts} \lp X^{x,i} S_\ep f_i(y)^\sharp+X^{xx,ij}S_\ep \der (y^{x,j} \cdot f_i'(y)) \rp,
\end{multline*}
where $f_i(y)^\sharp$ stands for the term into brackets in (\ref{decompo-f-un-glo}). Then:
\begin{itemize}
\item $z$ is well-defined as an element of $\tilde{Q}_{\ka,p}(I)$.
\item The following estimation holds:
\begin{equation}\label{contr-glo}
\cn[z;\tilde{Q}_{\ka,p}(I)] \leq c \, \ep^{-1} \lcl 1+\lln I \rrn^{2(\ga-\ka)} \cn[y;\tilde{Q}_{\ka,p}(I)]^2+\lln I \rrn^{2(1-\ka)} \norm{h}_{\cb_{1,p}}^2 \rcl,
\end{equation}
for some constant $c>0$.
\item For any $s<t \in I$, $(\delha z)_{ts}$ can also be written as
\begin{equation}\label{sums-int-glo}
(\delha z)_{ts}=\lim_{\lln \cp_{[s,t]} \rrn \to 0} \sum_{t_k \in \cp_{[s,t]}} \lcl X^{x,i}_{t_{k+1}t_k} S_\ep f_i(y_{t_k})+X^{xx,ij}_{t_{k+1}t_k} S_\ep \lp y^{x,j}_{t_k} \cdot f_i'(y_{t_k}) \rp \rcl \quad \text{in} \ \cb_{1,p}.
\end{equation}
\end{itemize}
\end{proposition}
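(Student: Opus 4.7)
The approach parallels that of Proposition \ref{defi-rig-int}, with the crucial simplification that $S_\ep$ allows us to absorb the terms $a_{ts}y_s\cdot f_i'(y_s)$ and $(X^{ax,i}_{ts}y^{x,i}_s)\cdot f_i'(y_s)$ into the remainder $f_i(y)^\sharp$, and thus to dispense altogether with the mixed operator $X^{xa}$.

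First I would verify the algebraic identity
$$
J=-\delha\bigl(X^{x,i}\,S_\ep f_i(y)+X^{xx,ij}\,S_\ep(y^{x,j}\cdot f_i'(y))\bigr),
$$
which follows from (\ref{relation-alg-m-l}), the algebraic properties (\ref{hypo-alg-rough-x-x-glo})--(\ref{hypo-alg-rough-x-xx-glo}), the commutation (\ref{hypo-s-ep}) of $X^{x,i}$ and $X^{xx,ij}$ with $S_\ep$, and the decomposition $\der f_i(y)=(\der x^j)\,y^{x,j}\cdot f_i'(y)+f_i(y)^\sharp$ read from (\ref{decompo-f-un-glo}); the two contributions involving $\der x^j$ cancel exactly thanks to (\ref{hypo-alg-rough-x-xx-glo}). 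In particular $\delha J=0$, and this simultaneously yields the representation $\cj(\hat{d}x\,S_\ep f(y))=(\id-\Laha\delha)\bigl(X^{x,i}S_\ep f_i(y)+X^{xx,ij}S_\ep(y^{x,j}\cdot f_i'(y))\bigr)$, from which (\ref{sums-int-glo}) follows via Proposition \ref{lien-laha-int}.

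The second step is an analytic estimate on $f_i(y)^\sharp$ in $\cb_p$. Controlling each of the four summands in (\ref{decompo-f-un-glo}) by means of $\norm{a_{ts}\vp}_{\cb_p}\lesssim|t-s|\,\norm{\vp}_{\cb_{1,p}}$, the regularity (\ref{regu-x-a-x-glo}) of $X^{ax,i}$, the algebra property of $\cb_{1,p}$ and the pointwise bound $|f_i'(y_s+r\der y)-f_i'(y_s)|\lesssim|\der y|$, and using $y_t=S_{t-a}h+(\delha y)_{ta}$ to reduce pointwise norms of $y$ to quantities involving only $h$ and $\cn[y;\tilde{Q}_{\ka,p}(I)]$, one obtains
$$
\cn[f_i(y)^\sharp;\cac_2^{2\ka}(\cb_p)]\lesssim_f |I|^{\eta}\bigl(1+\norm{h}_{\cb_{1,p}}^2+\cn[y;\tilde{Q}_{\ka,p}(I)]^2\bigr)
$$
for a suitable $\eta>0$. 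Combined with (\ref{regu-prop-semi}) (which yields $\norm{S_\ep\vp}_{\cb_{1,p}}\leq c\,\ep^{-1}\norm{\vp}_{\cb_p}$) and the regularity assumptions (\ref{regu-x-x-rough-glo})--(\ref{regu-x-x-x-glo}), this places $J$ in $\cac_3^{\mu}(\cb_{1,p})$ with $\mu=\min(\ga+2\ka,\,2\ga+\ka)>1$.

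Theorem \ref{existence-laha} then produces $\Laha J\in\cac_2^{\mu}(\cb_{1,p})$, so the prescription for $\delha z$ is a well-defined element of $\cac_2(\cb_{1,p})$ lying in $\ker\delha$; the initial condition $z_a=\psi$ then recovers $z\in\cac_1(\cb_{1,p})$ by Chasles. The canonical decomposition $\delha z=X^{x,i}z^{x,i}+z^\sharp$ with $z^{x,i}=S_\ep f_i(y)$ and $z^\sharp=X^{xx,ij}S_\ep(y^{x,j}\cdot f_i'(y))+\Laha J$ places $z$ in $\tilde{Q}_{\ka,p}(I)$, and (\ref{contr-glo}) follows by collecting the estimates above with the contraction property (\ref{contraction-laha}) of $\Laha$, each invocation of $S_\ep$ contributing a factor $\ep^{-1}$. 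The main obstacle is the careful bookkeeping of the $|I|$-dependent prefactors so that the two contributions in (\ref{contr-glo}) come out with the powers $2(\ga-\ka)$ and $2(1-\ka)$ respectively; this is precisely the balance which will enable the patching of local solutions into a global one in the sequel.
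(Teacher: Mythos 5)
Your plan follows the paper's proof quite closely: same algebraic identity for $J$, same use of $\Laha$ and its contraction property, same reformulation $\delha z=(\id-\Laha\delha)(\dots)$ to obtain the Riemann sums, and the same balance of $|I|$-prefactors that makes the subsequent patching possible. Two points deserve correction, however.

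First, you invoke the commutation property (\ref{hypo-s-ep}) in the algebraic verification that $J=-\delha(X^{x,i}S_\ep f_i(y)+X^{xx,ij}S_\ep(y^{x,j}\cdot f_i'(y)))$. That step does not actually need (\ref{hypo-s-ep}): since $S_\ep$ is a fixed (time-independent) linear operator, $\der(S_\ep\vp)=S_\ep\der\vp$ is automatic, and the identity follows from (\ref{relation-alg-m-l}) together with (\ref{hypo-alg-rough-x-x-glo}) and (\ref{hypo-alg-rough-x-xx-glo}) alone. The commutation hypothesis is instead used in the \emph{analytic} part of the argument, as I discuss next.

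Second, and more substantively: you claim to control the whole remainder $f_i(y)^\sharp$ directly in $\cac_2^{2\ka}(\cb_p)$ by means of the ``algebra property of $\cb_{1,p}$''. This is insufficient for the quadratic piece $\int_0^1 dr\,[f_i'(y_s+r(\der y)_{ts})-f_i'(y_s)]\cdot(\der y)_{ts}$, which is pointwise bounded by $|(\der y)_{ts}|^2$ and therefore sits naturally in $\cb_{p/2}$, not $\cb_p$. The algebra structure of $\cb_{1,p}$ would give a bound involving $\norm{(\der y)_{ts}}_{\cb_{1,p}}^2$, but $\norm{(\der y)_{ts}}_{\cb_{1,p}}$ has no time decay: the contribution $a_{ts}y_s$ is only small in a space of lower index. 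One could instead interpolate through $\cb_\infty$ (via a Sobolev embedding of some $\cb_{\al,p}$ with $2\al p>n$ and $1-\al>\ka$), but this extra interpolation is precisely what the paper avoids. The paper's route is: estimate this term in $\cb_{p/2}$, where $\norm{f_i(y)^{\sharp,2}_{ts}}_{\cb_{p/2}}\lesssim\norm{(\der y)_{ts}}_{\cb_p}^2$ is immediate, then use the commutation (\ref{hypo-s-ep}) to write $\norm{X^{x,i}_{tu}S_\ep f_i(y)^{\sharp,2}_{us}}_{\cb_{1,p}}=\norm{S_\ep X^{x,i}_{tu}f_i(y)^{\sharp,2}_{us}}_{\cb_{1,p}}\lesssim\ep^{-1}|t-u|^{\ga-n/(2p)}\norm{f_i(y)^{\sharp,2}_{us}}_{\cb_{p/2}}$, appealing to the $\cl(\cb_{p/2},\cb_p)$ regularity of $X^{x,i}$ in (\ref{regu-x-x-rough-glo}). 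You should split $f_i(y)^\sharp$ into the three linear terms (estimated in $\cb_p$, as you indicate) and the quadratic term (estimated in $\cb_{p/2}$ with the commutation trick), rather than forcing everything into $\cb_p$.

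Once this is repaired, the rest of the plan is sound and tracks the paper's argument.
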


\begin{proof}
Let us focus on the estimation of the residual term
$$z^\sharp_{ts}=X^{xx,ij}_{ts} S_\ep(y^{x,j}_s \cdot f_i'(y_s))
+\Laha_{ts} \lp X^{x,i} S_\ep f_i(y)^\sharp+X^{xx,ij}S_\ep \der (y^{x,j} \cdot f_i'(y)) \rp .$$
First, using (\ref{regu-x-x-x-glo}) and (\ref{regu-prop-semi}), we get
\bean
\norm{X^{xx,ij}_{ts} S_\ep (y^{x,j}_s \cdot f_i'(y_s))}_{\cb_{1,p}} &\leq & c_x \lln t-s \rrn^{2\ga} \ep^{-1} \norm{y^{x,j}_s \cdot f_i'(y_s)}_{\cb_p}\\
&\leq & c_x \lln t-s \rrn^{2\ga} \ep^{-1} \norm{y_s^{x,j}}_{\cb_{1,p}}\\
&\leq & c_x \lln t-s \rrn^{2\ga} \ep^{-1} \, \cn[y;\tilde{Q}_{\ka,p}(I)].
\eean

Secondly, write $f_i(y)^\sharp=f_i(y)^{\sharp,1}+f_i(y)^{\sharp,2}$, with $f_i(y)^{\sharp,1}_{ts}=a_{ts}y_s \cdot f'_i(y_s)+y^\sharp_{ts} \cdot f_i'(y_s)+(X^{ax,i}_{ts} y^{x,i}_s) \cdot f_i'(y_s)$, $f_i(y)^{\sharp,2}_{ts}=\int_0^1 dr \, [f_i'(y_s+r(\der y)_{ts})-f_i'(y_s)] \cdot (\der y)_{ts}$, and notice that
\bean
\lefteqn{\norm{X^{x,i}_{tu} S_\ep f_i(y)^{\sharp,1}_{us} }_{\cb_{1,p}}}\\
 &\lesssim & \lln t-u \rrn^\ga \ep^{-1} \norm{f_i(y)^{\sharp,1}_{us}}_{\cb_p}\\
& \lesssim & \lln t-u \rrn^\ga \ep^{-1} \lcl \norm{(a_{us}y_s) \cdot f_i'(y_s)}_{\cb_p}+\norm{(X^{ax,i}_{us} y^{x,i}_s) \cdot f_i'(y_s)}_{\cb_p}+\norm{y^\sharp_{us} \cdot f_i'(y_s)}_{\cb_p} \rcl\\
&\lesssim & \lln t-u \rrn^\ga \ep^{-1} \lcl \lln u-s \rrn \norm{y_s }_{\cb_{1,p}}+\lln u-s \rrn^{1+\ga} \norm{y^{x,i}_s }_{\cb_{1,p}}+\norm{y^\sharp_{us}}_{\cb_{1,p}} \rcl \\
&\lesssim & \lln t-u \rrn^\ga \ep^{-1} \lcl \lln u-s \rrn^{2\ka} \cn[y;\tilde{Q}_{\ka,p}(I)]+\lln u-s \rrn \lcl \cn[y;\tilde{Q}_{\ka,p}(I)]+\norm{h}_{\cb_{1,p}} \rcl \rcl \\
& \lesssim & \lln t-s \rrn^{\ga+2\ka} \ep^{-1} \lcl \cn[y;\tilde{Q}_{\ka,p}(I)]+\lln I \rrn^{1-2\ka} \norm{h}_{\cb_{1,p}} \rcl,
\eean

while, owing to (\ref{hypo-s-ep}),
\bean
\lefteqn{\norm{X^{x,i}_{tu} S_\ep f_i(y)^{\sharp,2}_{us}}_{\cb_{1,p}} \ = \ \norm{S_\ep X^{x,i}_{tu}  f_i(y)^{\sharp,2}_{us}}_{\cb_{1,p}} }\\
&\lesssim & \ep^{-1} \lln t-u \rrn^{\ga-n/(2p)} \norm{f_i(y)^{\sharp,2}_{us}}_{\cb_{p/2}}\\
&\lesssim & \ep^{-1} \lln t-u \rrn^{\ga-n/(2p)} \norm{(\der y)_{us}}_{\cb_{p}}^2\\
&\lesssim & \ep^{-1} \lln t-u \rrn^{\ga-n/(2p)} \lcl \norm{(\delha y)_{us}}_{\cb_{p}}^2 +\norm{a_{us}y_s }_{\cb_p}^2  \rcl \\
&\lesssim & \ep^{-1} \lln t-u \rrn^{\ga-n/(2p)} \lcl \lln u-s \rrn^{2\ga} \cn[y;\tilde{Q}_{\ka,p}(I)]^2 +\lln u-s \rrn^2 \lcl \cn[y;\tilde{Q}_{\ka,p}(I)]^2+\norm{h}_{\cb_{1,p}}^2 \rcl \rcl \\
&\lesssim & \ep^{-1} \lln t-s \rrn^{3\ga-n/(2p)} \lcl \cn[y;\tilde{Q}_{\ka,p}(I)]^2+\lln I \rrn^{2(1-\ga)} \norm{h}_{\cb_{1,p}}^2 \rcl.
\eean

Even more simple estimations based on (\ref{regu-x-x-x-glo}) give
\begin{multline*}
\norm{X^{xx,ij}_{tu} S_\ep \der(y^{x,j} \cdot f_i'(y))_{us} }_{\cb_{1,p}}\\
 \lesssim \ep^{-1} \lln t-s \rrn^{2\ga +\ka} \lcl 1+\cn[y;\tilde{Q}_{\ka,p}(I)]^2+\lln I \rrn^{1-\ka} \cn[y;\tilde{Q}_{\ka,p}(I)]\cdot  \norm{h}_{\cb_{1,p}} \rcl.
\end{multline*}

Thanks to the contraction property (\ref{contraction-laha}), we now easily deduce
$$\cn[z^\sharp;\cac_2^{2\ka}(I)] \leq c \, \ep^{-1} \lcl 1+\lln I \rrn^{2(\ga-\ka)} \cn[y;\tilde{Q}_{\ka,p}(I)]^2+\lln I \rrn^{2(1-\ka)} \norm{h}_{\cb_{1,p}}^2 \rcl.$$

The estimation of $\cn[z^{x,i};\cac_1^{0,\ka}(I;\cb_{1,p})]$ can be established along the same lines. As for (\ref{sums-int-glo}), it is a consequence of (\ref{lien-laha-int}), together with the reformulation 
$$\delha z =(\id -\Laha \delha)(X^{x,i}S_\ep f_i(y)+X^{xx,ij} S_\ep(y^{x,j} \cdot f_i'(y))).$$

\end{proof}

In order to settle an efficient fixed-point argument in this context, the following Lipschitz relation is required:

\begin{proposition}\label{contr-arg-glo}
If $y,\yti \in \tilde{Q}_{\ka,p}(I)$ with $y_a=\yti_a$, and if we denote by $z,\zti$ the two processes in $\tilde{Q}_{\ka,p}(I)$ such that
$$z_0=\zti_0=y_0 \quad \text{and} \quad \delha z=\cj(\hat{d} x^{(i)} \, S_\ep f_i(y)) \ , \ \delha \zti=\cj(\hat{d} x^{(i)} \, S_\ep f_i(\yti)),$$
then
\begin{multline}\label{ineg-contr-glo}
\cn[z-\zti;\tilde{Q}_{\ka,p}(I)] \leq c_x \, \ep^{-1} \lln I \rrn^{\ga-\ka} \cn[y-\yti;\tilde{Q}_{\ka,p}(I)] \\ 
\lcl 1+\lln I \rrn^{2(\ga-\ka)} \{ \cn[y;\tilde{Q}_{\ka,p}(I)]^2 +\cn[y;\tilde{Q}_{\ka,p}(I)]^2 \} +\lln I \rrn^{2(1-\ka)} \norm{h}^2_{\cb_{1,p}} \rcl .
\end{multline}
\end{proposition}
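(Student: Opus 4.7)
The plan is to mirror the proof of Proposition~\ref{defi-int-glo}, replacing absolute estimates by Lipschitz-type estimates on the differences. Set $w = z - \zti$ and $v = y - \yti$. Since $y_a = \yti_a$, one has $v_a = 0$, so the identity $v_s = (\delha v)_{sa} + S_{s-a} v_a$ gives $v_s = (\delha v)_{sa}$. Consequently every "sup"-type norm of $v$ on $I$ is bounded by $|I|^\ga$ times $\cn[v;\cac_1^\ga(\cb_{1,p})] \lesssim \cn[v;\tilde Q_{\ka,p}(I)]$; this is the mechanism by which the global factor $|I|^{\ga-\ka}$ in (\ref{ineg-contr-glo}) will appear.

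By construction $w$ is a controlled process with decomposition $\delha w = X^{x,i}\, w^{x,i} + w^\sharp$, where $w^{x,i} = S_\ep(f_i(y) - f_i(\yti))$ and
\begin{equation*}
w^\sharp_{ts} = X^{xx,ij}_{ts} S_\ep\bigl[y^{x,j}_s\! \cdot\! f'_i(y_s) - \yti^{x,j}_s\! \cdot\! f'_i(\yti_s)\bigr] + \Laha_{ts}\bigl(X^{x,i} S_\ep \Delta f_i(y)^\sharp + X^{xx,ij} S_\ep \der \Delta(y^{x,j}\! \cdot\! f'_i(y))\bigr),
\end{equation*}
where $\Delta$ denotes the operation of taking the difference between the $y$- and $\yti$-versions of a quantity. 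I would estimate these three contributions separately.

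For $w^{x,i}$: writing $f_i(y_s) - f_i(\yti_s) = v_s \cdot \int_0^1 f'_i(\yti_s + r v_s)\, dr$ and using $f_i \in \cx_3$ plus the smoothing $\norm{S_\ep \cdot}_{\cb_{1,p}} \lesssim \ep^{-1} \norm{\cdot}_{\cb_p}$ yields the required bounds on $\cn[w^{x,i};\cac_1^0(\cb_{1,p})]$ and $\cn[w^{x,i};\cac_1^\ka(\cb_{1,p})]$, each picking up one factor of $\cn[v;\tilde Q_{\ka,p}]$ and at most a linear factor in $\cn[y;\tilde Q_{\ka,p}] + \cn[\yti;\tilde Q_{\ka,p}]$. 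For the $X^{xx}$ boundary term, split $y^{x,j} f'_i(y) - \yti^{x,j} f'_i(\yti) = (y^{x,j} - \yti^{x,j}) f'_i(y) + \yti^{x,j}(f'_i(y) - f'_i(\yti))$ and apply (\ref{regu-x-x-x-glo}) together with $\norm{S_\ep \cdot}_{\cb_{1,p}} \lesssim \ep^{-1} \norm{\cdot}_{\cb_p}$. The $\Laha$ term is the most laborious: expanding $\Delta f_i(y)^\sharp$ as in the proof of Proposition~\ref{defi-int-glo} (that is, splitting into the $a_{ts} y_s$, $X^{ax}$, $y^\sharp$, and Taylor-remainder pieces), each piece is handled by an analogous linearization, with the Taylor remainder $f_i(y)^{\sharp,2}$ requiring $C^3$-regularity of $\si_i$ (hence $f_i \in \cx_3$) through a second-order expansion of the form $f'_i(y_s + r \der y) - f'_i(\yti_s + r \der \yti)$; this is where the hypothesis $f_i \in \cx_3$ is crucial.

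The main obstacle will be the careful bookkeeping of the $\Delta f_i(y)^\sharp$ contribution, since every quadratic term in $y$ must be transformed into a product of one $v$-factor (carrying the small Lipschitz norm $\cn[v;\tilde Q_{\ka,p}]$) and one $y$- or $\yti$-factor (producing either a linear or quadratic factor in $\cn[y;\tilde Q_{\ka,p}]+\cn[\yti;\tilde Q_{\ka,p}]$ or in $\norm{h}_{\cb_{1,p}}$ via the $a_{ts} y_s$ term). Once every piece has been bounded, the contraction property (\ref{contraction-laha}) of $\Laha$ and a final collection of all the constants give (\ref{ineg-contr-glo}), with the $\ep^{-1}$ factor coming uniformly from the regularization by $S_\ep$ and the $|I|^{\ga-\ka}$, $|I|^{2(\ga-\ka)}$, $|I|^{2(1-\ka)}$ factors arising precisely as in the proof of Proposition~\ref{defi-int-glo}.
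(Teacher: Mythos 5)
Your sketch follows essentially the same strategy as the paper's own proof: linearize all the $y$-vs-$\yti$ differences term by term, use $v_a=y_a-\yti_a=0$ to write $v_s=(\delha v)_{sa}$ and thereby extract the leading $|I|^{\ga-\ka}$ factor, and identify the Taylor remainder piece (what the paper calls $f_i(y)^{\sharp,2}$) as the delicate step requiring a second-order expansion of $f'_i$ and hence $f_i\in\cx_3$. The paper organizes its proof by singling out precisely this ``more intricate'' $X^{x,i}S_\ep(f_i(y)^{\sharp,2}-f_i(\yti)^{\sharp,2})$ term for explicit treatment and declares the remaining terms standard, whereas you enumerate all three contributions ($w^{x,i}$, the $X^{xx}$ boundary term, the $\Laha$ term) in parallel; this is a presentational difference only, not a different argument.
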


\begin{proof}
One has, for any $s,t \in I$,
\begin{multline*}
\delha (z-\zti)_{ts} =X^{x,i}_{ts} S_\ep (f_i(y_s)-f_i(\yti_s))+X^{xx,ij}_{ts} S_\ep (y^{x,j}_s \cdot f_i'(y_s)-\yti^{x,j}_s \cdot f_i'(\yti_s) )\\
+\Laha_{ts} \lp X^{x,i} S_\ep (f_i(y)^\sharp -f_i(\yti)^\sharp)+X^{xx,ij} \der (y^{x,j} \cdot f_i'(y)-\yti^{x,j} \cdot f_i'(\yti)) \rp.
\end{multline*}

Let us only focus on the more intricate term, that is to say $X^{x,i}S_\ep (f_i(y)^{\sharp,2}-f_i(\yti)^{\sharp,2})$, where, according to the notations of the proof of Proposition \ref{defi-int-glo}, 
$$f_i(y)^{\sharp,2}_{ts}=\int_0^1 dr \, \lc f_i'(y_s+r(\der y)_{ts})-f_i'(y_s) \rc \cdot (\der y)_{ts}.$$
Write
\begin{multline*}
f_i(y)^{\sharp,2}_{ts}-f_i(\yti)^{\sharp,2}_{ts}=\int_0^1 dr \, \lc f_i'(y_s+r(\der y)_{ts})-f_i'(y_s)\rc \cdot \der (y-\yti)_{ts}\\
+(\der \yti)_{ts} \cdot \der (y-\yti)_{ts} \cdot \int_0^1 dr \, r \int_0^1 dr' \, f_i''(y_s+rr' (\der y)_{ts})\\
+(\der \yti)^2_{ts} \cdot \int_0^1 dr \, r \int_0^1 dr' \, \lc f_i''(y_s+rr'(\der y)_{ts})-f_i''(\yti_s+rr' (\der \yti)_{ts}) \rc.
\end{multline*}

In this way,
\begin{multline*}
\norm{f_i(y)^{\sharp,2}_{ts}-f_i(\yti)^{\sharp,2}_{ts}}_{\cb_{p/2}} \lesssim \norm{\der(y-\yti)_{ts}}_{\cb_p} \lcl \norm{(\der y)_{ts}}_{\cb_p}+\norm{(\der \yti)_{ts}}_{\cb_p} \rcl \\
+\norm{(\der \yti)_{ts}}_{\cb_p}^2 \lcl \norm{y_s-\yti_s}_{\cb_\infty}+\norm{y_t-\yti_t}_{\cb_\infty} \rcl.
\end{multline*}

Now
\bean
\norm{\der (y-\yti)_{ts}}_{\cb_p} &\lesssim & \norm{\delha (y-\yti)_{ts}}_{\cb_{1,p}}+\lln t-s \rrn \norm{(y_s-\yti_s)-S_{sa}(y_a-\yti_a)}_{\cb_{1,p}} \\
&\lesssim & \lln t-s \rrn^\ga \cn[y-\yti;\tilde{Q}_{\ka,p}(I)], 
\eean

while

\bean
\norm{(\der y)_{ts}}_{\cb_p} &\leq & \norm{(\delha y)_{ts}}_{\cb_{1,p}}+\norm{a_{ts}(\delha y)_{sa}}_{\cb_p}+\norm{a_{ts}S_{sa}h}_{\cb_p}\\
&\lesssim & \lln t-s \rrn^\ka \lcl \lln I \rrn^{\ga-\ka} \cn[y;\tilde{Q}_{\ka,p}(I)]+\lln I \rrn^{1-\ka} \norm{h}_{\cb_{1,p}} \rcl
\eean

and finally

\bean
\norm{y_s-\yti_s }_{\cb_\infty} \ \lesssim \ \norm{y_s-\yti_s}_{\cb_{1,p}} &\lesssim & \norm{y_s-\yti_s-S_{sa}(y_a-\yti_a)}_{\cb_{1,p}}\\
&\lesssim & \lln I \rrn^{\ga-\ka} \cn[y-\yti;\tilde{Q}_{\ka,p}(I)].
\eean

This easily leads to
\begin{multline*}
\cn[f_i(y)^{\sharp,2}-f_i(\yti)^{\sharp,2};\cac_2^{2\ka}(\cb_{p/2})]
\lesssim \lln I \rrn^{\ga-\ka} \cn[y-\yti;\tilde{Q}_{\ka,p}(I)] \\ \lcl 1+\lln I\rrn^{2(\ga-\ka)} \lcl \cn[y;\tilde{Q}_{\ka,p}(I)]^2+\cn[\yti;\tilde{Q}_{\ka,p}(I)]^2 \rcl +\lln I \rrn^{2(1-\ka)} \norm{h}_{\cb_{1,p}}^2 \rcl.
\end{multline*}

Inequality (\ref{ineg-contr-glo}) now follows from standard computations based on Hypothesis \ref{hypo-regu-glo}.

\end{proof}

We are now in position to state the expected global result:

\begin{theorem}
Let $f_i \in \mathcal{X}_3$, for $i\in \{1,\ldots, N \}$. Under Hypothesis \ref{hypo-regu-glo}, let $(\ka,p) \in (1/3,\ga) \times \N^\ast$ such that $\ga-\ka >n/(2p)$. For any $T>0$, for any $\psi \in \cb_{1,p}$, the differential system
$$(\delha y)_{ts}=\cj_{ts}(\hat{d} x^{(i)} \, S_\ep f_i(y)) \quad , \quad y_0=\psi,$$
interpreted with Proposition \ref{defi-int-glo}, admits a unique global solution in $\tilde{Q}_{\ka,p}([0,T])$.

\end{theorem}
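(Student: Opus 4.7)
The plan is a two-step argument: first build a unique local solution via a fixed-point theorem on a short interval depending on the initial datum, then extend it to the whole $[0,T]$ by iterating on a sequence of intervals whose lengths do not shrink too fast, thanks to an a priori $\cb_{1,p}$-bound on the solution.

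For the local step, I would fix $h \in \cb_{1,p}$ and define, on an interval $I=[a,a+T_\ast]$, the map $\Gamma$ sending $y$ to the process $z$ constructed in Proposition \ref{defi-int-glo}, restricted to the affine slice $\{ y \in \tilde Q_{\ka,p}(I) : y_a = h\}$. Estimate (\ref{contr-glo}) gives invariance of the ball of radius $R = 2c\ep^{-1}(1+\lln I\rrn^{2(1-\ka)}\norm{h}^2_{\cb_{1,p}})$ as soon as $c\ep^{-1}\lln I\rrn^{2(\ga-\ka)}R \leq 1/2$, while (\ref{ineg-contr-glo}) yields a strict contraction on this ball under a smallness condition of the same flavor. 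Both are secured by choosing $T_\ast = T_\ast(\ep,\mathbf X,\norm{h}_{\cb_{1,p}})$ small enough, and the fixed point provides the local solution on $I$.

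The decisive ingredient for globality is the a priori control of $\norm{y_t}_{\cb_{1,p}}$. Since the $\sigma_i$ have uniformly compact support in $\xi$ (Hypothesis \ref{def:reg-f}), $\norm{f_i(\vp)}_{\cb_p} \leq c_f$ uniformly in $\vp$, so by (\ref{regu-prop-semi}),
$$
\norm{S_\ep f_i(\vp)}_{\cb_{1,p}} \leq c_f \ep^{-1}
\quad\text{uniformly in } \vp \in \cb_{1,p}.
$$
Along the fixed point one has $y^{x,i}_s = S_\ep f_i(y_s)$, hence $\norm{y^{x,i}}_{\cac_1^0(\cb_{1,p})}$ is bounded independently of $y$. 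Using $y_b = S_{b-a}y_a + X^{x,i}_{ba}y^{x,i}_a + y^\sharp_{ba}$, the contractivity of $S$ on $\cb_{1,p}$, and the fixed-point bound $\cn[y;\tilde Q_{\ka,p}(I)] \leq R$ derived above, one obtains a growth estimate of the form
$$
\norm{y_b}_{\cb_{1,p}} \leq \norm{y_a}_{\cb_{1,p}} + C_{\mathbf X,\ep,f}\, \lln I\rrn^\ka \lp 1 + \lln I\rrn^{2(1-\ka)} \norm{y_a}_{\cb_{1,p}}^2 \rp.
$$

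To patch local solutions into a global one, I would partition $[0,T]$ by a sequence $0=t_0<t_1<\ldots$ with each step size $T_\ast^{(n)} = t_{n+1}-t_n$ chosen so that the quadratic constraint $(T_\ast^{(n)})^{2(1-\ka)} \norm{y_{t_n}}_{\cb_{1,p}}^2 \lesssim 1$ is saturated, i.e.\ $T_\ast^{(n)} \sim \norm{y_{t_n}}_{\cb_{1,p}}^{-1/(1-\ka)}$. Injecting into the recursion above yields $\norm{y_{t_{n+1}}}_{\cb_{1,p}} \leq \norm{y_{t_n}}_{\cb_{1,p}} + C(T_\ast^{(n)})^{\ka}$, from which a standard induction (see \cite{DTbis} in a closely related Volterra framework) gives at most polynomial growth of $\norm{y_{t_n}}_{\cb_{1,p}}$ in $n$ and hence $\sum_n T_\ast^{(n)} = +\infty$. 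The interval $[0,T]$ is thus covered in finitely many steps, and uniqueness on $\tilde Q_{\ka,p}([0,T])$ follows from the local uniqueness on each subinterval. The main obstacle is precisely the quadratic term $\cn[y;\tilde Q_{\ka,p}]^2$ in (\ref{contr-glo}), which in the unregularized setting leads only to a local solution in Theorem \ref{theo-cas-rough}; here it is tamed because the convolution by $S_\ep$ trades a $\cb_p$-bound of $f_i$ for a $\cb_{1,p}$-bound of $S_\ep f_i$, decoupling $\norm{y^{x,i}}_{\cb_{1,p}}$ from $\norm{y}_{\cb_{1,p}}$ and thereby breaking the blow-up feedback.
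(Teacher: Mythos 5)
Your proposal correctly reconstructs the patching argument that the paper defers to \cite[Theorem 4.16]{DTbis}: a local fixed point on small intervals based on the two estimates (\ref{contr-glo}) and (\ref{ineg-contr-glo}), followed by simultaneous control of $\norm{y_{t_n}}_{\cb_{1,p}}$ and of $\cn[y;\tilde{Q}_{\ka,p}]$ on the successive intervals, which is exactly the strategy the paper indicates. One caution: the phrase ``a standard induction gives at most polynomial growth of $\norm{y_{t_n}}_{\cb_{1,p}}$ and hence $\sum_n T_\ast^{(n)}=+\infty$'' is not automatic and needs to be carried out explicitly, since with your choices $T_\ast^{(n)}\sim \norm{y_{t_n}}^{-1/(1-\ka)}_{\cb_{1,p}}$ the recursion yields $\norm{y_{t_n}}_{\cb_{1,p}}\sim n^{1-\ka}$ and therefore $T_\ast^{(n)}\sim n^{-1}$, which is precisely the borderline (logarithmically divergent) case, so the divergence depends on the exact exponent and not merely on polynomial growth.
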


\begin{proof}
From the two estimations (\ref{contr-glo}) and (\ref{ineg-contr-glo}), the patching argument is exactly the same as in \cite[Theorem 4.16]{DTbis}. It consists in controlling both the norm of the initial value and the norm of the process as a controlled path on each successive intervals. For sake of conciseness, the reader is refered to the latter article for a detailed proof of the statement.
\end{proof}

\section{Application}\label{section-appli-mbf-13}

As it was announced in the introduction, the goal here is to apply the previous abstract results of both Sections \ref{section-cas-young} and \ref{section-cas-rough} to a fractional non linearity given by the formula
\begin{equation}\label{forme-bruit-appli}
X_t(\varphi)(\xi)=\sum_{i=1}^N  \, x^{i}_t \sigma_i(\xi,\varphi(\xi)),
\end{equation}
with a $d$-dimensional $\ga$-Hölder process $x=(x^{1},\ldots,x^{N})$  with $\ga >1/3$, and $\sigma_i$ some smooth elements of $\cx_2$, as defined in Hypothesis \ref{def:reg-f}.

\smallskip

To this end, we know that it suffices to construct, from $x$, a path $\textbf{X}=(X^{x},X^{ax},X^{xa},$ $X^{xx})$ which satisfies Hypothesis \ref{hypo-regu}. Indeed, the latter assumption clearly covers Hypothesis~\ref{hypo-x-x-young} of Section \ref{section-cas-young}.\smallskip

As usual in this paper, we shall proceed in two steps: we first work at a heuristic level, that is with smooth processes, and try to obtain an expression which can be extended to irregular situations. We then check directly Hypothesis \ref{hypo-regu} on the expression obtained in the heuristic step.

\subsection{Heuristic considerations}
Assume for the moment that $x$ is a smooth $\R^N$-valued function. Then the operators $X^{x},X^{ax},X^{xa}$ and $X^{xx}$ are defined by the formulae
\begin{equation}\label{def-abs-1}
X^{x,i}_{ts}(\vp)(\xi)=\int_s^t S_{tu}(\vp)(\xi) \, dx^{i}_u \quad , \quad X^{ax,i}_{ts}(\vp)(\xi)=\int_s^t a_{tu}(\vp)(\xi) \, dx^{i}_u,
\end{equation}
\begin{equation}\label{def-abs-2}
X^{xa,i}_{ts}(\vp,\psi)(\xi)=\int_s^t S_{tu}((a_{us}\vp) \cdot \psi)(\xi) \, dx^{i}_u
\end{equation}
\begin{equation}\label{def-abs-3}
X^{xx,ij}_{ts}(\vp)(\xi) =\int_s^t S_{tu}(\vp )(\xi) \, dx^{i}_u \, (\der x^{j})_{us}.
\end{equation}
Set now $x^2_{ts}=\int_s^t dx_u \otimes (\der x)_{us}$. Then a straightforward integration by parts argument yields the following expression for the increments introduced above:
\begin{eqnarray}
X^{x,i}_{ts}&=&(\der x^{i})_{ts}+\int_s^t A S_{tu} (\der x^{i})_{us} \, du  \label{def-ipp-x-x} \\
X^{ax,i}_{ts}&=&\int_s^t A S_{tu} (\der x^{i})_{us} \, du \label{def-ipp-x-ax} \\
X^{xa,i}_{ts}&=&\int_s^t X^{x,i}_{tu} (AS_{us} \otimes \id) \, du  \label{def-ipp-x-xa} \\
X^{xx,ij}_{ts}&=&x^{2,ij}_{ts}+\int_s^t AS_{tu} x^{2,ij}_{us} \, du. \label{def-ipp-x-xx}
\end{eqnarray}

These are the expressions that we are ready to extend to irregular processes.

Let us only elaborate on how to get (\ref{def-ipp-x-xa}). Actually, it suffices to notice that
$$\int_s^t S_{tu}((a_{us}\vp) \cdot \psi) \, dx^i_u =-\int_s^t \partial_u(X^{x,i}_{tu})((a_{us}\vp) \cdot \psi),$$
where, in the last integral, the partial derivative $\partial_u$ only applies to the operato $X^{x,i}_{tu}$. Then
\bean
-\int_s^t \partial_u(X^{x,i}_{tu})((a_{us}\vp) \cdot \psi) &=& \lc -X^{x,i}_{tu}((a_{us}\vp) \cdot \psi)\rc_s^t+\int_s^t du \, X^{x,i}_{tu} (\partial_u(a_{us}\vp) \cdot \psi)\\
&=& \int_s^t du \, X^{x,i}_{tu}((\Delta S_{us} \vp) \cdot \psi).
\eean

\begin{remark}
At this point, it is not clear that the integral expressions $\int_s^t A S_{tu} (\der x^{i})_{us} \, du$,... give rise to operators defined on $\cb_{\al,p}$. For the moment, we only consider those expressions as operators acting on $\cac^\infty_c$. The extension to any space $\cb_{\al,p}$ will stem from a continuity argument (see the proof of Proposition \ref{prop-rough-op-rough-path}).
\end{remark}

\subsection{Definition of the heat equation rough path}

In a natural way, in order to extend expressions (\ref{def-ipp-x-x})-(\ref{def-ipp-x-xx}) to a Hölder path $x$, one has to suppose that this process generates a standard rough path, that is to say:

\begin{hypothesis}\label{hypo-x-rough-path}
We assume that $x$ allows to construct a process $x^2 \in \cac_2^{2\ga}(\R^n \otimes \R^n)$ such that $\der x^2= \der x \otimes \der x$, or in other words
\begin{equation}\label{aire-levy-standard}
(\der x^{2,ij})_{tus}= (\der x^{i})_{tu} (\der x^{j})_{us} \quad , \quad i,j=1,\ldots, N.
\end{equation}
\end{hypothesis}

This allows us to state the main result of the section:
\begin{proposition}\label{prop-rough-op-rough-path}
Under Hypothesis \ref{hypo-x-rough-path}, the operators $X^{x,i}$, $X^{ax,i}$, $X^{xa,i}$, $X^{xx,ij}$ defined by (\ref{def-ipp-x-x})-(\ref{def-ipp-x-xx}), can be extended to a path $\textbf{X}$ which satisfies Hypothesis \ref{hypo-regu}.
\end{proposition}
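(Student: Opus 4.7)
The plan is to take the identities (\ref{def-ipp-x-x})-(\ref{def-ipp-x-xx}) as \emph{definitions} of the four operators and then verify both parts (H1) and (H2) of Hypothesis \ref{hypo-regu} directly. In each formula the time-singularity of $AS_{tv}$, estimated by (\ref{regu-hold-semi-2}) as $(t-v)^{-1+\al}$ on $\cb_{\al,p}$, is paired with a H\"older factor $|v-u|^\ga$ (or $|v-s|^{2\ga}$ in the case of $X^{xx}$); since $\al+\ga>0$ (resp.\ $\al+2\ga>0$), all these integrals are absolutely convergent Bochner integrals on $\cac^\infty_c$ and, by the operator bounds they produce, extend by density to bounded maps on the required Bessel spaces.

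For the algebraic identities, the decomposition (\ref{hypo-alg-rough-x-ax}) is immediate from the definitions of $X^{x,i}$ and $X^{ax,i}$, and $\delha X^{x,i}=0$ follows by a direct expansion of $X^{x,i}_{ts}-X^{x,i}_{tu}-S_{tu}X^{x,i}_{us}$: splitting $\int_s^t=\int_s^u+\int_u^t$, using the semigroup identity $S_{tu}AS_{uv}=AS_{tv}$ to absorb the prefactor $S_{tu}$, and invoking the basic telescoping $\int_u^t AS_{tv}\,dv=S_{tu}-\id=a_{tu}$, one sees that the remaining terms cancel. The verification of (\ref{hypo-alg-rough-x-xx}) proceeds along the same lines: after the same splitting and the same telescoping, the contributions involving $x^{2,ij}_{us}a_{tu}$ cancel, while the Chen relation $(\der x^2)_{tus}=(\der x)_{tu}(\der x)_{us}$ from Hypothesis \ref{hypo-x-rough-path} produces precisely the factor $(\der x^i)_{tu}(\der x^j)_{us}$, which regroups into $X^{x,i}_{tu}(\der x^j)_{us}$ as required. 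The identity (\ref{hypo-alg-rough-x-xa}) is handled analogously, starting from the integration-by-parts writing $X^{xa,i}_{ts}=-\int_s^t(\partial_u X^{x,i}_{tu})(a_{us}(\cdot)\cdot(\cdot))\,du$ and then exploiting $\partial_u a_{us}=AS_{us}$ together with the already established $\delha X^{x,i}=0$.

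For (H2), the various H\"older bounds reduce to explicit time integrals of the form $\int_s^t(t-v)^{-1+\al}(v-s)^\be\,dv\lesssim(t-s)^{\al+\be}$, combined with (\ref{regu-prop-semi})-(\ref{regu-hold-semi-2}) to produce the spatial gain, and with the Riesz--Thorin inequality (\ref{p/k-p}) to pass from $\cb_{p/2}$ to $\cb_p$ (this is what accounts for the extra $-n/(2p)$ exponent appearing in (\ref{regu-x-x-rough}) and (\ref{regu-x-x-a})). The various regularities of $X^{xx,ij}$ follow directly from the bound $\|x^{2,ij}_{vs}AS_{tv}\vp\|_{\cb_p}\lesssim(v-s)^{2\ga}(t-v)^{-1+\al}\|\vp\|_{\cb_{\al,p}}$ and its variants.

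The main obstacle is expected to be the bookkeeping for $X^{xa,i}$, which is the only genuinely bilinear operator and the only one in whose IBP formula both the spatial factor $a_{us}\vp$ \emph{and} the semigroup $S_{tu}$ appear; extracting the structure $X^{xa,i}(a\otimes\id)+X^{x,i}(a\otimes\id)$ on the right-hand side of (\ref{hypo-alg-rough-x-xa}) requires carefully distinguishing contributions where $a_{us}$ is shifted through $u'\in[s,u]$ from those where the identity $a_{us}=a_{uu'}+S_{uu'}a_{u's}$ (or equivalently $a_{us}=S_{us}-\id$) produces an extra $X^{x,i}$-type term. All remaining checks are routine once the pattern used for $X^{xx,ij}$ is in hand.
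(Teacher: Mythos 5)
The overall strategy is the same as the paper's --- take the integration-by-parts identities \eqref{def-ipp-x-x}--\eqref{def-ipp-x-xx} as definitions and verify (H1)/(H2) directly --- but your account of the analytic part (H2) skips the crucial technical step, and as written it cannot deliver the required bounds.

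The issue is in your opening sentence: ``the time-singularity of $AS_{tv}$, estimated \dots as $(t-v)^{-1+\al}$ on $\cb_{\al,p}$, is paired with a H\"older factor $|v-u|^\ga$.'' In \eqref{def-ipp-x-x} the integrand is $AS_{tu}(\der x^{i})_{us}$, so the singularity sits at $u=t$ while the H\"older factor $(\der x^{i})_{us}\sim|u-s|^\ga$ \emph{does not vanish there}. Consequently the naive estimate of the integral in terms of $\|\vp\|_{\cb_p}$ is logarithmically divergent ($\int_s^t(t-u)^{-1}|u-s|^\ga du=\infty$), and estimating against $\|\vp\|_{\cb_{\al,p}}$ with $\al>0$ converges but only produces the exponent $\al+\ga$, so one obtains $X^{x,i}\in\cac_2^{\al+\ga}(\cl(\cb_{\al,p},\cb_p))$ --- not the sharp bound $X^{x,i}\in\cac_2^\ga(\cl(\cb_p,\cb_p))\cap\cac_2^\ga(\cl(\cb_{\al,p},\cb_{\al,p}))$ demanded by \eqref{regu-x-x-rough}. ``Extend by density'' cannot fix this: the constants genuinely blow up as $\al\downarrow 0$. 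The same problem afflicts $X^{ax,i}$ and $X^{xx,ij}$.

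What the paper does, and what is missing from your argument, is a preliminary algebraic rewriting that moves the H\"older increment to the singular endpoint. Writing $(\der x^{i})_{us}=(\der x^{i})_{ts}-(\der x^{i})_{tu}$ and using $\int_s^t AS_{tu}\,du=a_{ts}$, one has
\begin{equation*}
X^{x,i}_{ts}=S_{ts}(\der x^{i})_{ts}-\int_s^t AS_{tu}\,(\der x^{i})_{tu}\,du ,
\end{equation*}
and similarly $X^{ax,i}_{ts}=a_{ts}(\der x^{i})_{ts}-\int_s^t AS_{tu}(\der x^{i})_{tu}\,du$ and $X^{xx,ij}_{ts}=S_{ts}x^{2,ij}_{ts}-\int_s^t AS_{tu}\lc x^{2,ij}_{tu}+(\der x^{i})_{tu}(\der x^{j})_{us}\rc du$. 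In these rewritten forms every integrand carries a factor $(t-u)^\ga$ (or $(t-u)^{2\ga}$, or $(t-u)^\ga(u-s)^\ga$) that kills the $(t-u)^{-1}$ singularity of $AS_{tu}$, which is exactly what makes $\int_s^t(t-u)^{-1+\ga}du\lesssim(t-s)^\ga$ work and yields the sharp $\cac_2^\ga(\cl(\cb_p,\cb_p))$ and $\cac_2^\ga(\cl(\cb_{\al,p},\cb_{\al,p}))$ bounds, as well as the $\cac_2^{\ga-n/(2p)}(\cl(\cb_{p/2},\cb_p))$ bound via \eqref{p/k-p}. Without this rewriting the analytic part fails. (Ironically, the operator you single out as the main obstacle, $X^{xa,i}$, is the one where your naive scheme actually \emph{does} work: the singularity from $AS_{us}$ sits at $u=s$ and there the operator-norm of $X^{x,i}_{tu}$ does not vanish, but the spatial gain $(u-s)^{-1+\ka}$ from $AS_{us}$ acting on $\cb_{\ka,p}$ already makes the integral convergent.) The algebraic part of your proposal is sound and matches the paper's computations.
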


\begin{proof}
We have to check both the algebraic and analytic assumptions.

\smallskip

\noindent
\textit{Algebraic conditions}. The verification of (\ref{hypo-alg-rough-x-x})-(\ref{hypo-alg-rough-x-xx}) is a matter of elementary calculations. For instance, let us have a look at relation (\ref{hypo-alg-rough-x-xx}). For all $s<u<t$, one has
$$(\delha X^{xx,ij})_{tus}=x^{2,ij}_{ts}-x^{2,ij}_{tu}-S_{tu} x^{2,ij}_{us} +\int_u^t AS_{tv}(x^{2,ij}_{vs}-x^{2,ij}_{vu}) \,j dv.$$
Then, by (\ref{aire-levy-standard}), this expression reduces to
\bean
\lefteqn{(\delha X^{xx,ij})_{tus}}\\
&=& (\id -S_{tu}) x^{2,ij}_{us}+(\der x^{i})_{tu}(\der x^{j})_{us}+\int_u^t AS_{tv}(x^{2,ij}_{us}+(\der x^{i})_{vu}(\der x^{j})_{us}) \, dv\\
&=& \lc (\der x^{i})_{tu}+\int_u^t AS_{tv}(\der x^{i})_{vu} \, dv \rc (\der x^{j})_{us} \ = \ X^{x,i}_{tu} (\der x^{j})_{us}.
\eean

\smallskip

\noindent
\textit{Analytical conditions}. Let us examine the regularity of each operator individually.

\smallskip

\noindent
\textit{Case of $X^{x,i}$}. The norms at stake here are

\begin{equation}\label{regu-x-x-l-p}
\cn[X^{x,i} ; \cac_2^\ga(\cl(\cb_p,\cb_p)) ]
\end{equation}
\begin{equation}\label{regu-x-x-w-p}
\cn[X^{x,i} ;\cac_2^\ga(\cl(\cb_{\ka,p},\cb_{\ka,p})) ]
\end{equation}
\begin{equation}\label{regu-l-p/2}
\cn[X^{x,i} ;\cac_2^{\ga-n/2p}(\cl(\cb_{p/2},\cb_p)) ].
\end{equation}

In order to establish those regularity results, let us first rewrite (\ref{def-ipp-x-x}) as 
$$X^{x,i}_{ts}=S_{ts}(\der x^{i})_{ts}-\int_s^t AS_{tu} (\der x^{i})_{tu} \, du.$$

Then observe that (\ref{regu-x-x-l-p}) and (\ref{regu-x-x-w-p}) are obtained thanks to the same kind of arguments. We thus focus on (\ref{regu-x-x-w-p}) for sake of conciseness. But the latter norm can be bounded easily by noticing that:
\begin{align*}
&\|X^{x,i}_{ts}(\varphi) \|_{\cb_{\ka,p}} \le \|S_{ts}(\varphi) \|_{\cb_{\ka,p}} |(\der x^{i})_{ts}|+\int_s^t
\|A S_{tu}(\varphi) \|_{\cb_{\ka,p}} |(\der x^{i})_{tu}| du\\
&\lesssim \|\varphi \|_{\cb_{\ka,p}} \|x^{i}\|_{\gamma} \lp |t-s|^\gamma +\int_s^t
|t-u|^{-1+\gamma}du \rp \lesssim \|\varphi \|_{\cb_{\ka,p}} \|x^{i}\|_{\gamma} |t-s|^\gamma,
\end{align*}
which holds for all $\ka\ge 0$. Along the same lines, in order to prove (\ref{regu-l-p/2}), we use the fact that $\|S_{ts}(\varphi) \|_{\cb_{p}} \lesssim \|\varphi\|_{\cb_{p/2}} |t-s|^{-n/2p}$ and that $\|A S_{ts}(\varphi) \|_{\cb_{p}} \lesssim \|\varphi\|_{\cb_{p/2}} |t-s|^{-1-n/2p}$. Then we obtain 
$$
\|X^{x,i}_{ts}(\varphi) \|_{\cb_{p}} 
\lesssim \|\varphi \|_{\cb_{p/2}} \|x^{i}\|_{\gamma} |t-s|^{\gamma-n/2p}
$$
for all $p$ such that $\gamma-n/2p >0$. Those estimations give the required bound (\ref{regu-l-p/2}).

\smallskip

\noindent
\textit{Case of $X^{ax,i}$}. We should now check that (\ref{regu-x-a-x}) is verified in our setting. To this aim, write $X^{ax,i}$ as
$$X^{ax,i}_{ts}=a_{ts}(\der x^{i})_{ts}-\int_s^t AS_{tu} (\der x^{i})_{tu} \, du.$$
Then
$$
\|X^{ax,i}_{ts}(\vp)\|_{\cb_p}=\|a_{ts}(\vp)\|_{\cb_p} |(\der x^{i})_{ts}|+ \int_s^t \|A S_{tu}(\vp)\|_{\cb_p} |(\der x^{i})_{tu}| du
$$
and using the semigroup estimates
$$
\|a_{ts}(\vp)\|_{\cb_p} \lesssim \|\varphi\|_{\cb_{\ka,p}} |t-s|^{\ka}\qquad
\|A S_{tu}(\vp)\|_{\cb_p} \lesssim  \|\varphi\|_{\cb_{\ka,p}} |t-u|^{-1+\ka}
$$
we easily conclude that
\begin{equation}
\cn[X^{ax,i};\cl(\cb_{\ka,p},\cb_p)] \lesssim c_x \lln t-s \rrn^{\ga+\ka},
\end{equation}

which is the expected regularity result.

\smallskip

\noindent
\textit{Case of $X^{xa,i}$}. Going back to (\ref{regu-x-x-a}), one must prove that the following norms are finite:
\begin{equation}\label{x-x-a-ka-p-p-p}
\cn[X^{xa,i};\cac_2^{\ga+\ka-n/(2p)}(\cl(\cb_{\ka,p} \times \cb_p,\cb_p))],
\quad\mbox{and}\quad
\cn[X^{xa,i};\cac_2^\ga(\cl(\cb_{\ka,p} \times \cb_{\ka,p},\cb_{\ka,p}))].
\end{equation}
To do so, write $X^{xa,i}_{ts}$ as
$$X^{xa,i}_{ts}=X^{x,i}_{ts}(a_{ts} \otimes \id)-\int_s^t S_{tu} X^{x,i}_{us} \lp AS_{us} \otimes \id \rp \, du.$$
We deduce
$$
\cn[X^{xa,i}_{ts}(\vp,\psi);\cb_p] \lesssim \cn[X^{x,i};\cac_2^{\ga-n/(2p)}(\cl(\cb_{p/2},\cb_{p}))] \cn[((a_{ts}\vp) \cdot \psi); \cb_{p/2}] 
$$
$$
\qquad + \cn[X^{x,i};\cac_2^{\ga-n/(2p)}(\cl(\cb_{p/2},\cb_p))] \int_s^t  |u-s|^{\ga}\cn[((A S_{us}\vp) \cdot \psi); \cb_{p/2}] du 
$$
where 
$$
 \cn[((a_{ts}\vp) \cdot \psi); \cb_{p/2}]
  \lesssim \cn[a_{ts}\vp; \cb_{p}] \cn[\psi; \cb_{p}]
\lesssim  |t-s|^{\ka}\cn[\vp; \cb_{\ka,p}] \cn[\psi; \cb_{p}]
$$ and
$$
\cn[((A S_{us}\vp) \cdot \psi); \cb_{p/2}]  \lesssim \cn[A  S_{us}\vp ; \cb_p] \cn[ \psi; \cb_p] 
\lesssim |u-s|^{-1+\ka} \cn[\vp ; \cb_{\ka,p}] \cn[ \psi; \cb_p] . 
$$
This allows to conclude that
\begin{multline*}
\cn[X^{xa,i}_{ts}(\vp,\psi);\cb_p]\\ 
\lesssim \cn[X^{x,i};\cac_2^{\ga-n/(2p)}(\cl(\cb_{p/2},\cb_p))]  \cn[\vp ; \cb_{\ka,p}] \cn[ \psi; \cb_p] |t-s|^{\ga+\ka-n/(2p)},
\end{multline*}
and the first of the required bounds in (\ref{x-x-a-ka-p-p-p}) follows. For the second one, we have
\begin{multline*}
\cn[X^{xa,i}_{ts}(\vp,\psi);\cb_{\ka,p}] \lesssim \cn[X^{x,i};\cac_2^{\ga}(\cl(\cb_{\ka,p},\cb_{\ka,p}))] \cn[((a_{ts}\vp) \cdot \psi); \cb_{\ka,p}] 
\\
 + \cn[X^{x,i};\cac_2^{\ga}(\cl(\cb_{\ka,p},\cb_{\ka,p}))] \int_s^t  |u-s|^{\ga}\cn[((A S_{us}\vp) \cdot \psi); \cb_{\ka,p}] du ,
\end{multline*}
and using the algebra property of $\cb_{\ka,p}$, we get
$$
\cn[((a_{ts}\vp) \cdot \psi); \cb_{\ka,p}]  \lesssim 
\cn[\vp; \cb_{\ka,p}] 
\cn[\psi; \cb_{\ka,p}] 
$$
and
$$
\cn[((A S_{us}\vp) \cdot \psi); \cb_{\ka,p}] 
\lesssim 
\cn[A  S_{us}\vp; \cb_{\ka,p}] \cn[ \psi; \cb_{\ka,p}]
\lesssim |u-s|^{-1}
\cn[\vp; \cb_{\ka,p}] \cn[ \psi; \cb_{\ka,p}]
$$
so that
\begin{multline*}
\cn[X^{xa,i}_{ts}(\vp,\psi);\cb_p] \\
\lesssim \cn[X^{x,i};\cac_2^{\ga}(\cl(\cb_{\ka,p},\cb_{\ka,p}))] 
\cn[\vp; \cb_{\ka,p}] \cn[ \psi; \cb_{\ka,p}]
(|t-s|^\gamma+\int_s^t  |u-s|^{\ga-1}du).
\end{multline*}
The second estimate follows.

\smallskip

\noindent
\textit{Case of $X^{xx,ij}$}. We must estimate the norm
\begin{equation}\label{eq:norm-X-xx}
\cn[X^{xx,ij};\cac_2^{2\ga}(\cl(\cb_p ,\cb_p))],
\end{equation}
and also $\cn[X^{xx,ij};\cac_2^{2\ga}(\cl(\cb_{\al,p} ,\cb_{\al,p}))]$ and $\cn[X^{xx,ij};\cac_2^{2\ga}(\cl(\cb_{\al,p} ,\cb_{p}))]$. We focus on (\ref{eq:norm-X-xx}), the others terms having similar behavior using the algebra property of $\cb_{\al,p}$ and the Sobolev embedding $\cb_{\al,p}\subset \cb_\infty$. 

First, write $X^{xx,ij}_{ts}$ as
$$X^{xx,ij}_{ts}=S_{ts} x^{2,ij}_{ts}-\int_s^t A S_{tu} \lc x^{2,ij}_{tu}+(\der x^{i})_{tu}(\der x^{j})_{us} \rc \, du.$$
From this expression, we immediately get
\bean
\lefteqn{\cn[X^{xx,ij}_{ts}(\vp);\cb_p]}\\
& \lesssim &  c_x \lcl \cn[ S_{ts}(\vp );\cb_p] | |t-s|^{2\gamma} +\int_s^t \cn[A S_{tu}(\vp );\cb_p]   [|t-u|^{2\gamma }+|t-u|^\gamma |u-s|^\gamma|] du \rcl \\
& \lesssim &  c_x \lcl \cn[ \vp ;\cb_p] | |t-s|^{2\gamma} + \cn[\vp ;\cb_p] \int_s^t \lln t-u \rrn^{-1}   [|t-u|^{2\gamma }+|t-u|^\gamma |u-s|^\gamma|] du \rcl \\
&\lesssim & c_x \, \cn[ \vp ;\cb_p] \lln t-s \rrn^{2\ga}.
\eean
This gives the expected conclusion $\cn[X^{xx,ij};\cac_2^{2\ga}(\cl(\cb_p ,\cb_p))]< \infty$.

\end{proof}

We are thus in position to apply the abstract Theorems \ref{theo-cas-young} and \ref{theo-cas-rough} in order to solve the heat equation for a general rough path above $x$:

\begin{theorem}
Let $x=(x^{(1)},\ldots,x^{(d)})$ a $d$-dimensional $\ga$-Hölder path ($\ga>1/3$) satisfying the rough path hypothesis \ref{hypo-x-rough-path}, and consider the infinite-dimensional noise $X$ build on $x$ through the formula (\ref{forme-bruit-appli}). If $f\in \cac^{3,b}(\R;\R)$, then the stochastic differential system
\begin{equation}\label{eq:heat-thm}
(\delha y)_{ts} =\int_s^t S_{tu} \, dX_u(y_u), \quad y_0 =\psi \in \cb_{\ka,p},
\end{equation}
interpreted with Proposition \ref{prop-int-young} if $\ga>1/2$ and Proposition \ref{defi-rig-int} if $\ga\in (1/3,1/2]$, admits:
\begin{itemize}
\item A unique global solution in $\cacha_1^{0,\ka}([0,T],\cb_{\ka,p})$ if $H>1/2$, where the pair $(\ka,p) \in (0,\ga) \times \N^\ast$ is such that $H+\ka >1$ and $2\ka p>1$.
\item A unique local solution in $\cq_{\ka,p}^\ka([0,T^\ast])$ if $\ga\in (1/3,1/2]$, where $T^\ast$ is a strictly positive random time and the pair $(\ka,p)\in (1/3,\ga) \times \N^\ast$ is such that $H-\ka >1/(2p)$.
\end{itemize}
\end{theorem}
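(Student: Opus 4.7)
The plan is to deduce the result from the abstract existence and uniqueness Theorems~\ref{theo-cas-young} (in the Young regime $\ga>1/2$) and~\ref{theo-cas-rough} (in the rough regime $\ga\in(1/3,1/2]$). Both theorems are stated under Hypothesis~\ref{hypo-regu} for the abstract rough path $\textbf{X}=(X^{x},X^{ax},X^{xa},X^{xx})$; since the Young hypothesis~\ref{hypo-x-x-young} is contained in~\ref{hypo-regu}, a single construction step will cover both regimes. Proposition~\ref{prop-rough-op-rough-path} provides exactly this step by building the four operators out of the pair $(x,x^{2})$ via the integration-by-parts formulas (\ref{def-ipp-x-x})--(\ref{def-ipp-x-xx}) and verifying all the required algebraic identities and H\"older bounds.

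The second ingredient is the verification that the nonlinearities $[f_i(\vp)](\xi)=\si_i(\xi,\vp(\xi))$ belong to the class $\cx_2$ in the Young case and $\cx_3$ in the rough case of Hypothesis~\ref{def:reg-f}. The uniform compact support in $\xi$ (read off from the form of the noise in (\ref{forme-bruit-appli})) together with the $\cac^{3,b}$ assumption on $\si_i$ in $\eta$ supplies the pointwise bounds required there, while Proposition~\ref{prop-stri} upgrades these into the action estimates on the Bessel spaces $\cb_{\ka,p}$ used throughout Sections~\ref{section-cas-young} and~\ref{section-cas-rough}. With both ingredients in hand, one just has to pick admissible parameters: any $\ka<\ga$ with $\ga+\ka>1$ and $2\ka p>n$ for Theorem~\ref{theo-cas-young}, respectively any $\ka\in(1/3,\ga)$ with $\ga-\ka>n/(2p)$ for Theorem~\ref{theo-cas-rough}. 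The two conclusions then follow by direct citation, the local horizon $T^{\ast}$ in the rough regime being governed by the norm $\cn[\textbf{X};\cac\cl^{\ga,\ka,p}]$, hence by the size of $(x,x^{2})$.

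The main obstacle is really absorbed into Proposition~\ref{prop-rough-op-rough-path}: the verification of (\ref{regu-x-x-rough})--(\ref{regu-x-x-x}) for the concretely defined operators. A naive definition such as $X^{x,i}_{ts}=\int_s^t S_{tu}\,dx^{(i)}_u$ is not available for irregular $x$; the crucial trick is the integration-by-parts identity $X^{x,i}_{ts}=(\der x^{i})_{ts}+\int_s^t AS_{tu}(\der x^{i})_{us}\,du$, which trades the singular stochastic integral for a Riemann-Lebesgue integral against the smoothing factor $AS_{tu}$. Combined with the semigroup estimate $\norm{AS_{tu}\vp}_{\cb_p}\lesssim|t-u|^{-1+\ka}\norm{\vp}_{\cb_{\ka,p}}$ and the H\"older bounds $|\der x^{i}|\lesssim|t-s|^{\ga}$, $\norm{x^{2,ij}}\lesssim|t-s|^{2\ga}$, this produces an integrable singularity and yields all the announced regularities for $X^{x,i}$. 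Analogous integration-by-parts reformulations handle $X^{ax,i}$, $X^{xa,i}$ and $X^{xx,ij}$, the only non-routine point being the mild loss of spatial regularity forced by the bilinear form $B(a_{us}\otimes\id)$ inside $X^{xa,i}$, which is compensated by the factor $|u-s|^{\ka}$ coming from $\norm{a_{us}\vp}_{\cb_p}\lesssim|u-s|^{\ka}\norm{\vp}_{\cb_{\ka,p}}$.
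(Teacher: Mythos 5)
Your proposal is correct and follows exactly the route the paper takes: the theorem is a direct corollary of combining Proposition~\ref{prop-rough-op-rough-path} (which builds the abstract rough path $\textbf{X}=(X^x,X^{ax},X^{xa},X^{xx})$ satisfying Hypothesis~\ref{hypo-regu} from the pair $(x,x^2)$ via the integration-by-parts reformulations) with the abstract Theorems~\ref{theo-cas-young} and~\ref{theo-cas-rough}, and the paper indeed issues no separate proof beyond this composition. Your extra elaboration of the integration-by-parts trick $X^{x,i}_{ts}=(\der x^i)_{ts}+\int_s^t AS_{tu}(\der x^i)_{us}\,du$ and the semigroup estimate $\|AS_{tu}\|_{\cl(\cb_{\ka,p},\cb_p)}\lesssim|t-u|^{-1+\ka}$ is precisely the content of the proof of Proposition~\ref{prop-rough-op-rough-path}, so you have also identified where the real work lives.
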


\begin{remark}
It is a well-known fact that one can construct a rough path (in the sense of Hypothesis \ref{hypo-x-rough-path}) above a $N$-dimensional fractional Brownian motion $B$ with Hurst parameter $H>1/3$ (see e.g. \cite{CQ,FV,NNT,Un}). This means that we can solve the heat equation~(\ref{eq:heat-thm}) driven by this kind of process.
\end{remark}

\section{Rough case of order 3}\label{section-rough-case-2}

To conclude with, and also to reinforce the feeling that our approach to the problem (\ref{systeme-final}) is viable, let us say a few words about the case of a $\ga$-Hölder noise $x$, with $\ga \in (1/4,1/3]$ only. We will not present the construction of the integral with as many details as in the previous section, and will stick to the broad lines of the calculations.

\subsection{Construction of the integral} Fix an index $\ga \in (1/4,1/3]$ which represents the regularity of $x$. In order to be allowed to invert $\delha$ via Theorem \ref{existence-laha}, one must look here for a term of order 4, or more exactly of order $\ga+3\ka$, where $\ka$ is such that $\ka < \ga$ and $\ga+3\ka >1$. The crucial point of the following construction lies in the (obvious) existence of a coefficient $\ka \in (0,1/4)$ such that $\ga+3\ka >1$. Since $\ka <1/4$, we can resort to the space $\cb_{2\ka,p}$ and envisage the possibility of a solution evolving in this space. In this context, the operator $X^{xa,i}$ that we have introduced in the previous section, and which was formally defined as $X^{xa,i}_{ts}=\int_s^t S_{tu} \, dx^i_u \, (a_{us} \otimes \id)$, becomes an order-three operator:
$$X^{xa,i} \in \cac_2^{\ga+2\ka}(\cb_{2\ka,p} \times \cb_p,\cb_p).$$
Taking this observation into account, it seems then quite appropriate to consider the following space of controlled processes:
\begin{multline*}
\cq_{2\ka,p}^\ka=\{ y \in \cacha_1^\ka(\cb_{2\ka,p}): \, (\delha y)_{ts}=X^{x,i}_{ts}y^{x,i}_s+X^{xx,ij}_{ts}y^{xx,ij}_s+y^\sharp_{ts}, \\
(\der y^{x,i})_{ts}=(\der x^j)_{ts}\cdot  y^{xx,ji}_s +y^{x,\sharp,i}_{ts},\\
y^{x,i} \in \cac_1^0(\cb_{2\ka,p}) \cap \cac_1^\ka(\cb_p) \ , \ y^{xx,ij} \in \cac_1^0(\cb_{2\ka,p}) \cap \cac_1^\ka(\cb_p) \ , \ y^\sharp \in \cac_2^\ga(\cb_{2\ka,p}) \cap \cac_2^{3\ka}(\cb_p) \ ,\\
y^{x,\sharp,i} \in \cac_2^{2\ka}(\cb_p)  \},
\end{multline*}
together with its natural norm. One should notice the additional relation between $y^{x}$ and $y^{xx}$ which appears in the definition above, with respect to the rough case of order 2. This is reminiscent of the nilpotent algebra structure of \cite{FV}, and also of the algebraic structures introduced in \cite{g-ramif,g-abs,TT}.

\smallskip

According to our usual way to construct rough integrals, for the time being, $x$ is assumed to be differentiable and the operators $X^{x,i}$ and $X^{xx,ij}$ are defined by the formulae
$$X^{x,i}_{ts}=\int_s^t S_{tu} \, dx_u^i \quad , \quad X^{xx,ij}_{ts}=\int_s^t S_{tu} \, dx_u^j \, (\der x^i)_{us}.$$
Besides, as in the previous section, the integer $p$ is picked such that $4\ka p>n$ and in this way, $\cb_{2\ka,p}$ becomes a Banach algebra. We shall then expand our integrals so that they can be extended to the case of an irregular noise.

\smallskip

Assume that $f_i\in\cx_3$, where $\cx_3$ is defined at Hypothesis \ref{def:reg-f}, and similarly to Notation~\ref{not:f-prime}, set
$[f_i''(\varphi)](\xi) = \nabla_2^2 \sigma_i(\xi,\varphi(\xi)).$
If $y\in \cq_{2\ka,p}^\ka$, an elementary Taylor expansion of order 3 yields 
\begin{equation*}
\begin{split}
(\der f_i(y))_{us}
&= (X^{x,j}_{us}y^{x,j}_s) \cdot f_i'(y_s)+(X^{xx,jk}y^{xx,jk}_s) \cdot f_i'(y_s)+a_{us}y_s \cdot f_i'(y_s)+\frac{1}{2} (\der y)^2_{us} \cdot f_i''(y_s)\\
&  \hspace{1cm} +y^\sharp_{us} \cdot f_i'(y_s)+\int_0^1 dr \, r \int_0^1 dr' \, \lc f_i''(y_s+rr'(\der y)_{us})-f_i''(y_s)\rc \cdot (\der y)_{us}^2\\
&= (\der x^j)_{us}\cdot y^{x,j}_s \cdot f'(y_s)+x^{2,(jk)}_{us} \cdot y^{xx,jk}_s  \cdot f_i'(y_s)+a_{us}y_s \cdot f_i'(y_s)+\frac{1}{2} (\der y)^2_{us} \cdot f_i''(y_s)\\
&  \hspace{1cm} +(X^{ax,j}_{us} y^{x,j}_s) \cdot f_i'(y_s)+X^{axx,jk}(y^{xx,jk}_s) \cdot f_i'(y_s)\\
&  \hspace{1cm} +y^\sharp_{us} \cdot f_i'(y_s)+\int_0^1 dr \, r \int_0^1 dr' \, \lc f_i''(y_s+rr'(\der y)_{us})-f_i''(y_s)\rc \cdot (\der y)_{us}^2
\end{split}
\end{equation*}
and thus
\begin{align}\label{decompo-der-f}
&(\der f_i(y))_{us}= (\der x^j)_{us}\cdot y^{x,j}_s \cdot f'(y_s)+x^{2,(jk)}_{us} \cdot y^{xx,jk}_s  \cdot f_i'(y_s)+a_{us}y_s \cdot f_i'(y_s) \notag\\
&  \hspace{6cm}
+\frac{1}{2} (\der x^j)_{us} y^{x,j}_s \cdot (\der x^k)_{us} y^{x,k}_s \cdot f_i''(y_s)+f_i(y)^\sharp_{us}. \notag \\
&= (\der x^j)_{us}\cdot y^{x,j}_s \cdot f_i'(y_s)+x^{2,(jk)}_{us} \cdot \lcl  y^{xx,jk}_s  \cdot f_i'(y_s)+y^{x,j}_s \cdot y^{x,k}_s \cdot f_i''(y_s) \rcl+a_{us}y_s \cdot f_i'(y_s) \notag\\
&  \hspace{11cm}
+f_i(y)^\sharp_{us}, 
\end{align}
where $f_i(y)^\sharp_{ts}$ is a residual term which is long (but easy) to write explicitly, and which can be estimated as
\begin{equation}\label{estim-f-y-sh}
\cn[f_i(y)^{\sharp,1}_{ts};\cb_p] \lesssim \lln t-s \rrn^{3\ka} 
\lp 1+\cn[y;\cq_{2\ka,p}^\ka]^{3} \rp.
\end{equation}
In the calculation that leads to (\ref{decompo-der-f}), we have introduced the operators
$$X^{ax,i}_{ts}=\int_s^t a_{tu} \, dx^i_u \quad , \quad X^{axx,ij}_{ts}=\int_s^t a_{tu} \, dx^j_u \, (\der x^i)_{us},$$
while the notation $x^{2,ij}_{ts}$ stands for the usual Lévy area $x^{2,ij}_{ts}=\int_s^t dx^j_u \, (\der x^i)_{us}$. As for the estimation (\ref{estim-f-y-sh}), it is obtained by means of the following natural hypotheses:
\begin{equation}\label{ana-x-ax-rough-2}
X^{ax,i} \in \cac_2^{\ga+2\ka}(\cl(\cb_{2\ka,p} \times \cb_p, \cb_p)),
\quad\mbox{and}\quad
X^{axx,ij} \in \cac_2^{2\ga+2\ka}(\cl(\cb_{2\ka,p},\cb_p)).
\end{equation}

\smallskip

Now, in order to be able to define $\cj_{ts}(\hat dx \, f(y))$ for irregular processes, inject expression (\ref{decompo-der-f}) into the decomposition
$$\int_s^t S_{tu} \, dx_u \, f(y_u)=X^{x,i}_{ts} (f_i(y_s))+\int_s^t S_{tu} \, dx^i_u \,  (\der f_i(y))_{us},$$
to deduce
\begin{multline}\label{int-heur-ordre-2}
\int_s^t S_{tu} \, dx_u \, f(y_u) =X^{x,i}_{ts}(f_i(y_s))+X^{xx,ij}_{ts}(y^{x,i}_s \cdot  f_j'(y_s))+X^{xa,i}_{ts}(y_s,f_i'(y_s))\\
+X^{xxx,ijk}_{ts}(y^{xx,ij}_s \cdot  f_k'(y_s)+y^{x,i}_s \cdot y^{x,j}_s \cdot f_k''(y_s))+r_{ts}, 
\end{multline}
with $r_{ts}=\int_s^t S_{tu} dx_u^i \, (f_i(y)^\sharp_{us})$. The operator $X^{xxx,ijk}$ is here defined by 
$$X^{xxx,ijk}_{ts}=\int_s^t S_{tu} \, dx^k \, X^{2,ij}_{us},$$
and we associate to this operator the (reasonable) regularity assumption
\begin{equation}\label{ana-x-xxx-rough-2}
X^{xxx,ijk} \in \cac_2^{3\ga}(\cl(\cb_{2\ka,p},\cb_{2\ka,p})) \cap \cac_2^{3\ga}(\cl(\cb_p,\cb_p)).
\end{equation}
According to the considerations of Section \ref{section-rough-heuri}, it only remains to establish that $\delha r$ is a term of order 4, as a process with values in $\cb_p$. Actually, we are going to show that $\delha r \in \cac_3^{\ga+3\ka-n/p}(\cb_p)$. It will then suffice to pick $p$ large enough, so that $\ga+3\ka-n/p >1$.

\smallskip

To compute $\delha r$, one must assume the set of algebraic hypotheses (H1) in Hypothesis \ref{hypo-regu}, and a further algebraic relation for $X^{xxx}$:
\begin{equation}\label{alg-x-xxx-rough-2}
(\delha X^{xxx,ijk})_{tus}=X^{x,k}_{tu} X^{2,ij}_{us}  +X^{xx,jk}_{tu}(\der x^i)_{us}.
\end{equation}
As far as the regularity assumptions are concerned, on top of condition~\eqref{ana-x-xxx-rough-2}, we have to modify a little the set (H2) in Hypothesis \ref{hypo-regu}, which becomes:
\begin{equation}\label{ana-x-x-rough-2-1}
X^{x,i} \in \cac_2^\ga(\cl(\cb_p,\cb_p)) \cap \cac_2^\ga(\cl(\cb_{2\ka,p},\cb_{2\ka,p}))
\end{equation}
\begin{equation}\label{ana-x-x-rough-2-2}
X^{x,i} \in \cac_2^{\ga-n/(2p)}(\cl(\cb_{p/2},\cb_p)) \cap \cac_2^{\ga-n/p}(\cl(\cb_{p/3},\cb_{p}))
\end{equation}
\begin{equation}\label{hypo-regu-x-xx-2}
X^{xx,ij} \in \cac_2^{2\ga}(\cl(\cb_p,\cb_p)) \cap \cac_2^{2\ga}(\cl(\cb_{2\ka,p},\cb_{2\ka,p}))\cap \cac_2^{2\ga-n/(2p)}(\cl(\cb_{p/2},\cb_p)).
\end{equation}
Besides, in order to clarify the presentation of this developpement, we will have recourse to the notation $\approx$ to signify ``congruent to a term of order at least $\ga+3\ka-n/p$", or in other words: for all $h,l \in \cac_3$, 
$h \approx l \Leftrightarrow h-l \in \cac_3^{\ga+3\ka-n/p}(\cb_p)$.
Recall that, with this convention, our aim is to establish that $\delha r \approx 0$.

\smallskip

Going back to (\ref{int-heur-ordre-2}), one has, by (\ref{relation-alg-m-l}),
\bean
\lefteqn{-(\delha r)_{tus}}\\
&=& -X^{x,i}_{tu}(\si_i \cdot \der(f(y))_{us})+(\delha X^{xx,ij})_{tus}(y^{x,i}_s \cdot  f_j'(y_s))-X^{xx,ij}_{tu} \der (y^{x,i} \cdot f_j'(y))_{us}\\
& & \hspace{3cm} +(\delha X^{xxx,ijk})_{tus}(y^{xx,ij}_s \cdot  f_k'(y_s)+y^{x,i}_s \cdot y^{x,j}_s \cdot f_k''(y_s))\\
& & \hspace{3cm} -X^{xxx,ijk}_{tu} \der( y^{xx,ij} \cdot f_k'(y)+y^{x,i} \cdot y^{x,j} \cdot f_k''(y))_{us}\\
& & +(\delha X^{xa,i})_{tus}(y_s,f_i'(y_s))-X^{xa,i}_{tu}((\der y)_{us},f_i'(y_s))-X^{xa,i}_{tu}(y_u,\der(f_i'(y))_{us}).
\eean
One can already notice that the fifth and seventh terms of the last sum have the expected regularity. Thanks to the above algebraic relations, together with the decomposition (\ref{decompo-der-f}), we then deduce
\bean
\lefteqn{-(\delha r)_{tus}}\\
&\approx & -X^{xx,ij}_{tu}((\der y^{x,i})_{us} \cdot f_j'(y_s))-X^{xx,ij}_{tu}(y^{x,i}_u \cdot \si_j \cdot \der(f'(y))_{us})\\
& & +X^{xx,jk}_{tu}\lp (\der x^i)_{us} \cdot \lcl y^{xx,ij}_s \cdot  f_k'(y_s)+y^{x,i}_s \cdot y^{x,j}_s \cdot f_k''(y_s) \rcl\rp\\
& &+X^{xa,i}_{tu}(a_{us}y_s,f_i'(y_s))-X^{xa,i}_{tu}((\der y)_{us},f_i'(y_s))\\
& \approx & -X^{xx,ij}_{tu}(y^{x,\sharp,i}_{us} \cdot \si_j \cdot f'(y_s))-X^{xa,i}_{tu}((\delha y)_{us},f_i'(y_s))\\
& & -X^{xx,ij}_{tu} \lp y^{x,i}_u \cdot \der(f_j'(y))_{us}-(\der x^k)_{us} \cdot y^{x,k}_s \cdot y^{x,i}_s \cdot f_j''(y_s) \rp\\
& \approx & -X^{xx,ij}_{tu} \lp  y^{x,i}_u \cdot \lcl \der(f_j'(y))_{us}-(\der x^k)_{us} \cdot y^{x,k}_s \cdot f''(y_s) \rcl \rp.
\eean
It is finally easy to see that $(\der f_i'(y))_{us}=(\der x^k)_{us}y^{x,k}_s \cdot f_i''(y_s)+f_i'(y)^\sharp_{us}$, where $f_i'(y)^\sharp$ is a term such that
$$\cn[f_i'(y)^\sharp_{ts};\cb_{p/2}] \lesssim \lln t-s \rrn^{2\ka} \cb[y;\cq_{2\ka,p}^\ka]^2,$$
By means of Hypothesis (\ref{hypo-regu-x-xx-2}), this statement enables to conclude $\delha r \approx 0$.

\smallskip

Let us turn now to our main aim, which is an extension of the integral to Hölder processes with Hölder exponents greater than $1/4$. We first formalize the assumption on $X$ into:
\begin{hypothesis}\label{hypo-reg-x-rough-2}
We assume that the process $x$ gives birth to operators $X^{x,i}$, $X^{ax,i}$, $X^{xx,ij}$, $X^{xa,i}$, $X^{axx,ij}$, $X^{xxx,ijk}$, for which the algebraic conditions (H1) in Hypothesis \ref{hypo-regu} and the analytical conditions (\ref{ana-x-x-rough-2-1})-(\ref{hypo-regu-x-xx-2}), (\ref{ana-x-ax-rough-2}), (\ref{ana-x-xxx-rough-2}), are satisfied, for some triplet $(\ga,\ka,p)$ such that
$$\ga \in (1/4,1/3] \ , \ \ka \in (0,1/4) \ , \ p\in \N^\ast \ , \ 4\ka p>n \ , \ \ga+3\ka-n/p >1.$$
\end{hypothesis}

Just as in Section \ref{section-cas-rough}, this hypothesis allows to give a sense to the rough integral at stake here:
\begin{proposition}\label{defi-rig-int-rough-2}
Under Hypothesis \ref{hypo-reg-x-rough-2} and assuming that $f=(f_1,\ldots,f_N)$ with $f_i\in \cx_3$ for $i=1,\dots,N$, we set, for any $y\in \hat{\cq}_{\ka,p}^\ka([0,T])$, $\cj(\hat{d}x \, f(y))=(\id-\Laha \delha)(J)$, where, for all $s<t$,
\begin{multline}
J_{ts}=X^{x,i}_{ts}(f_i(y_s))+X^{xx,ij}_{ts}(y^{x,i}_s  \cdot f'_j(y_s))+X^{xa,i}_{ts}(y_s,f_i'(y_s))\\
+X^{xxx,ijk}_{ts}(y^{xx,ij}_s  \cdot f_k'(y_s)+y^{x,i}_s \cdot y^{x,j}_s  \cdot f_k''(y_s)).
\end{multline}

Then one has:
\begin{enumerate}
\item $\cj(\hat{d}x \, f(y))$ is well-defined and there exists $z\in \cq_{2\ka,p}^\ka([0,T])$ such that $\delha z$ is equal to the increment $\cj(\hat{d}x \, f(y))$. Furthermore, for any $0\le s<t\le T$, the integral $\cj_{ts}(\hat{d}x \, f(y))$ coincides with a Riemann type integral for two regular functions $x$ and $y$.
\item The following estimation holds true
\begin{equation}\label{estim-int-rough-2}
\cn[z;\cq_{2\ka,p}^\ka([0,T])] \leq \cfx \lcl 1+\cn[y;\cac_1^0(\cb_{2\ka,p})]^3+T^\al \cn[y;\cq_{2\ka,p}^\ka]^3 \rcl,
\end{equation}
for some $\al >0$.
\item For all $s<t$, $\cj_{ts}(\hat{d}x \, f(y))=\lim_{|\Delta_{[s,t]}| \to 0} \sum_{(t_k)\in \Delta_{[s,t]}} J_{t_{k+1}t_k}$.
\end{enumerate}
\end{proposition}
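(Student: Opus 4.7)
The plan is to mirror the structure of the proof of Proposition \ref{defi-rig-int}, replacing the order-two Taylor expansion by the order-three expansion \eqref{decompo-der-f} of $\der f_i(y)$. Concretely, I would set $\delha z = (\id-\Laha\delha)(J)$, so that $\cj(\hat{d}x\,f(y)) = \delha z$ by construction, and then verify that $z$ actually lies in $\cq_{2\ka,p}^\ka$ with the claimed components and bounds.

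First, I would prove that $\delha J \in \mathrm{Ker}\,\delha \cap \cac_3^{\mu}(\cb_p)$ with $\mu = \ga+3\ka-n/p > 1$, which is the prerequisite for applying $\Laha$ via Theorem \ref{existence-laha}. This is done by applying the product rule \eqref{relation-alg-m-l} to each of the four summands of $J_{ts}$, inserting the algebraic identities (H1) of Hypothesis \ref{hypo-regu} together with the new identity \eqref{alg-x-xxx-rough-2} for $\delha X^{xxx}$, and then substituting the Taylor expansion \eqref{decompo-der-f}. This is exactly the heuristic calculation carried out at the end of Section~\ref{section-rough-case-2}, where the residual $r$ is now identified with $-\Laha(\delha J)$; the outcome is the congruence $\delha J \approx 0$, modulo $\cac_3^{\ga+3\ka-n/p}(\cb_p)$. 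Theorem \ref{existence-laha} then gives $\Laha(\delha J) \in \cac_2^{\ga+3\ka-n/p}(\cb_p)$, and a parallel argument in the space $\cb_{2\ka,p}$ yields $\Laha(\delha J) \in \cac_2^{\ga+\ka-n/p}(\cb_{2\ka,p})$.

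Next, I would read off the controlled-path decomposition from $\delha z = J - \Laha(\delha J)$. The leading linear term $X^{x,i}_{ts}f_i(y_s)$ forces $z^{x,i} = f_i(y)$, and the $X^{xx,ij}$ contraction forces $z^{xx,ij} = y^{x,i}\cdot f'_j(y)$. The remaining summands of $J$, together with $\Laha(\delha J)$, carry H\"older exponents $\ga+2\ka$ (from $X^{xa}$ by \eqref{ana-x-ax-rough-2}), $3\ga$ (from $X^{xxx}$ by \eqref{ana-x-xxx-rough-2}) and $\ga+3\ka-n/p$ respectively, each of which is at least $3\ka$, so they combine into the sharp remainder $z^\sharp \in \cac_2^{3\ka}(\cb_p) \cap \cac_2^{\ga}(\cb_{2\ka,p})$. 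The identity $(\der z^{x,i})_{ts} = (\der x^j)_{ts}\cdot z^{xx,ji}_s + z^{x,\sharp,i}_{ts}$ required for $z\in \cq_{2\ka,p}^\ka$ is a direct rereading of \eqref{decompo-der-f}: the first two terms on the right-hand side there are exactly the required leading increment, while the remaining $a_{us}y_s\cdot f'_i(y_s)$ (controlled by the semigroup estimate \eqref{regu-hold-semi}) and $f_i(y)^\sharp_{us}$ (controlled by \eqref{estim-f-y-sh}) form $z^{x,\sharp,i}$ with the required $2\ka$-regularity. The estimate \eqref{estim-int-rough-2} then follows by combining the analytic bounds \eqref{ana-x-x-rough-2-1}--\eqref{hypo-regu-x-xx-2}, \eqref{ana-x-ax-rough-2}, \eqref{ana-x-xxx-rough-2}, the contraction property \eqref{contraction-laha} of $\Laha$, and the algebra property of $\cb_{2\ka,p}$ guaranteed by $4\ka p>n$. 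The Riemann-sum representation (3) is an immediate consequence of Proposition~\ref{lien-laha-int} applied to $g = J$.

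The main technical obstacle is the careful tracking of spatial regularity in the computation of $\delha J$. Several intermediate products, such as $y^{x,i}_s\cdot y^{x,j}_s\cdot f''_k(y_s)$, naturally live in $\cb_{p/3}$ rather than $\cb_p$, while the squared increment $(\der y)^2$ appearing implicitly in $f_i(y)^\sharp$ lives in $\cb_{p/2}$. The extra hypotheses \eqref{ana-x-x-rough-2-2} and \eqref{hypo-regu-x-xx-2} are tailored precisely to absorb these losses through the smoothing maps $\cb_{p/k}\to\cb_p$ of $X^{x,i}$ and $X^{xx,ij}$; however, one must verify at every step of the $\delha J$ computation that the resulting power count never falls below $\ga+3\ka-n/p$, which itself requires the precise calibration $p$ large, $4\ka p > n$, and $\ga+3\ka-n/p>1$ prescribed in Hypothesis \ref{hypo-reg-x-rough-2}. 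Once this bookkeeping is under control, the rest of the argument is a routine adaptation of the proof of Proposition~\ref{defi-rig-int}.
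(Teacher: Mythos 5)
Your proposal is correct and mirrors the paper's own (implicit) argument: the paper does not write out a proof of this proposition, relying instead on the heuristic computation that precedes it, and you accurately reconstruct the required steps --- the order-three Taylor expansion \eqref{decompo-der-f}, the computation of $\delha J$ via \eqref{relation-alg-m-l} together with the algebraic relations (H1) and \eqref{alg-x-xxx-rough-2}, the verification $\delha J \in \cac_3^{\ga+3\ka-n/p}(\cb_p)\cap\mathrm{Ker}\,\delha$, the application of Theorem~\ref{existence-laha}, the reading-off of the controlled-path components $z^{x,i}=f_i(y)$ and $z^{xx,ij}=y^{x,i}\cdot f_j'(y)$, and Proposition~\ref{lien-laha-int} for the Riemann-sum representation. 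One minor bookkeeping slip: in verifying the identity $(\der z^{x,i})_{ts}=(\der x^j)_{ts}\cdot z^{xx,ji}_s+z^{x,\sharp,i}_{ts}$ you assign the first two terms of \eqref{decompo-der-f} to the leading increment, but the second term there, $x^{2,(jk)}_{us}\cdot\lcl y^{xx,jk}_s\cdot f_i'(y_s)+y^{x,j}_s\cdot y^{x,k}_s\cdot f_i''(y_s)\rcl$, is a genuine second-order increment in $(u,s)$ --- it is not of the form $(\der x)_{us}\cdot(\text{1-variable})_s$ --- and so must be absorbed into the remainder $z^{x,\sharp,i}$ rather than the leading part; this is harmless for the conclusion because $x^2\in\cac_2^{2\ga}$ with $2\ga>2\ka$, but the grouping should be corrected.
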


\subsection{Resolution of the differential system}
As for Theorem \ref{theo-cas-rough}, 
the (local) resolution of the noisy heat equation of roughness order 3 stems from a standard fixed-point argument based on the estimation (\ref{estim-int-rough-2}):
\begin{theorem}\label{theo-cas-rough-2}
Under Hypothesis \ref{hypo-reg-x-rough-2} and assuming that $f=(f_1,\ldots,f_N)$ with $f_i \in \cx_4$; $i=1,\dots,N$, there exists a times $T>0$ for which the system
\begin{equation}
(\delha y)_{ts}=\cj_{ts}(\hat{d}x \, f(y)) \quad , \quad y_0=\psi \in \cb_p,
\end{equation}
interpreted with Proposition \ref{defi-rig-int-rough-2}, admits a unique solution $y$ in $\cq_{2\ka,p}^\ka([0,T])$. 
\end{theorem}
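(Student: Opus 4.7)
The plan is to mimic the fixed-point argument used for Theorem \ref{theo-cas-rough}, but now carried out in the enriched space $\cq_{2\ka,p}^\ka$ of controlled processes that keep track of both the first-order Gubinelli derivative $y^{x}$ and the second-order derivative $y^{xx}$. Define, for $T_0 \leq T$ and a radius $R>0$ to be tuned later, the ball
$$
B_{T_0,\psi}^R = \lcl y \in \cq_{2\ka,p}^\ka([0,T_0]) : y_0 = \psi, \ \cn[y;\cq_{2\ka,p}^\ka([0,T_0])] \leq R \rcl,
$$
and the map $\Gamma : B_{T_0,\psi}^R \to \cq_{2\ka,p}^\ka([0,T_0])$ that sends $y$ to the unique $z = \Gamma(y) \in \cq_{2\ka,p}^\ka$ with $z_0 = \psi$ and $\delha z = \cj(\hat{d}x\,f(y))$, whose existence is provided by Proposition \ref{defi-rig-int-rough-2}. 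The candidate decomposition is forced by the integral formula: $z^{x,i} = f_i(y)$ and $z^{xx,ij} = y^{x,i} \cdot f_j'(y)$, and the residual $z^\sharp$ absorbs the $\Laha$-correction together with the remaining $X^{xa}$ and $X^{xxx}$ terms.

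Invariance of a ball is obtained directly from the a priori bound (\ref{estim-int-rough-2}): picking $R = 1 + 2\norm{\psi}_{\cb_{2\ka,p}}$ and controlling $\cn[z;\cac_1^0(\cb_{2\ka,p})]$ by $\norm{\psi}_{\cb_{2\ka,p}} + T_0^\ka \cn[z;\cacha_1^\ka(\cb_{2\ka,p})]$, one finds $T_1 > 0$ such that
$$
\cn[\Gamma(y); \cq_{2\ka,p}^\ka([0,T_0])] \leq \norm{\psi}_{\cb_{2\ka,p}} + c_x\, T_0^{\al}\lcl 1 + \cn[y;\cq_{2\ka,p}^\ka]^3 \rcl \leq R
$$
for all $T_0 \leq T_1$. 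This step is essentially a reread of (\ref{estim-int-rough-2}) combined with the semigroup estimate (\ref{regu-hold-semi}).

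The main work lies in the contraction estimate. For $y,\tilde{y}\in B_{T_0,\psi}^R$, set $z = \Gamma(y)$, $\tilde{z} = \Gamma(\tilde{y})$, and write
\begin{multline*}
\delha(z-\tilde{z}) = X^{x,i}\big(f_i(y) - f_i(\tilde{y})\big) + X^{xx,ij}\big(y^{x,i}\cdot f_j'(y) - \tilde{y}^{x,i} \cdot f_j'(\tilde{y})\big) \\
+ X^{xa,i}\big((y,f_i'(y)) - (\tilde{y},f_i'(\tilde{y}))\big) + X^{xxx,ijk}\big(G_{ijk}(y) - G_{ijk}(\tilde{y})\big) - \Laha \delha (\cdots),
\end{multline*}
where $G_{ijk}(y) = y^{xx,ij}\cdot f_k'(y) + y^{x,i}\cdot y^{x,j}\cdot f_k''(y)$. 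Each bracket must be expanded using the standard Taylor identity
$$
f_i(y) - f_i(\tilde{y}) = (y-\tilde{y}) \cdot \int_0^1 f_i'(\tilde{y} + r(y-\tilde{y}))\,dr,
$$
and similarly for $f_i'$ and $f_i''$, which is precisely where the extra regularity $f_i \in \cx_4$ comes in: the second-derivative differences $f_k''(y) - f_k''(\tilde{y})$ need one further Taylor expansion to be estimated by $\cn[y-\tilde{y};\cac_1^0(\cb_{2\ka,p})]$ (via the Sobolev embedding $\cb_{2\ka,p} \hookrightarrow \cb_\infty$ guaranteed by $4\ka p > n$), and the pointwise products inherit the $\cb_{2\ka,p}$-algebra property. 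Tracking all the norms of the controlled path data $(y-\tilde{y}, y^{x}-\tilde{y}^{x}, y^{xx}-\tilde{y}^{xx}, y^\sharp - \tilde{y}^\sharp)$, one obtains a bound of the shape
$$
\cn[z-\tilde{z}; \cq_{2\ka,p}^\ka([0,T_0])] \leq c_x\, T_0^{\al}\,\big(1 + R\big)^2\, \cn[y-\tilde{y};\cq_{2\ka,p}^\ka([0,T_0])]
$$
for some $\al > 0$, uniformly on $B_{T_0,\psi}^R$.

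Choosing $T_0 \leq T_1$ small enough so that $c_x T_0^\al (1+R)^2 \leq 1/2$, the map $\Gamma : B_{T_0,\psi}^R \to B_{T_0,\psi}^R$ becomes a strict contraction in the Banach space structure of $\cq_{2\ka,p}^\ka([0,T_0])$, and Banach's fixed-point theorem delivers a unique local solution. The hardest and most error-prone step will be the bookkeeping of the contraction estimate: there are many terms, each involving either a third-order operator ($X^{xxx}$) or a cross-derivative combination ($y^{x}\cdot y^{x} \cdot f_k''$), and the algebra must be carried out on both the $\cb_p$ and $\cb_{2\ka,p}$ layers simultaneously, using at each step the appropriate regularity line of Hypothesis \ref{hypo-reg-x-rough-2} together with the condition $\ga + 3\ka - n/p > 1$ to close the argument via Theorem \ref{existence-laha}. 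Extension to the whole interval $[0,T]$ is not attempted here, in line with the statement of the theorem.
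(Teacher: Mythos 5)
Your proposal is correct and follows essentially the same route as the paper, which itself dispenses with a detailed proof and simply remarks that the local solution comes from ``a standard fixed-point argument based on the estimation (\ref{estim-int-rough-2})'' (mirroring the argument of Theorem \ref{theo-cas-rough}). You have fleshed out precisely what that argument must contain: identifying $z^{x,i}=f_i(y)$, $z^{xx,ij}=y^{x,i}\cdot f_j'(y)$ as the controlled-path data, using (\ref{estim-int-rough-2}) for ball invariance, using $f_i\in\cx_4$ together with the Sobolev embedding $\cb_{2\ka,p}\hookrightarrow\cb_\infty$ and the algebra property (guaranteed by $4\ka p>n$) to expand the differences $f_i'(y)-f_i'(\yti)$, $f_i''(y)-f_i''(\yti)$ in the contraction estimate, and closing via $\Laha$ under the condition $\ga+3\ka-n/p>1$ — all consistent with the framework of Hypothesis \ref{hypo-reg-x-rough-2} and Proposition \ref{defi-rig-int-rough-2}.
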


\smallskip

This abstract theorem can then be applied as in Section \ref{section-appli-mbf-13}, thanks to the construction of a path 
$$\textbf{X}=(X^{x,i}, X^{ax,i}, X^{xx,ij}, X^{xa,i}, X^{axx,ij}, X^{xxx,ijk})$$
from a $\ga$-Hölder process $x=(x^{(1)},\ldots,x^{(d)})$, with $\ga >1/4$. The rough path Hypothesis \ref{hypo-x-rough-path} must simply be enhanced in:
\begin{hypothesis}
Assume that the path $x$ allows to construct two processes $x^2 \in \cac_2^{2\ga}(\R^n \otimes \R^n)$, $x^3 \in \cac_2^{2\ga}(\R^n \otimes \R^n \otimes \R^n)$ such that
$$\der x^2=\der x \otimes \der x \quad , \quad x^2+(x^2)^\ast=\der x \otimes \der x,$$
$$\der x^3=x^2 \otimes \der x +\der x \otimes x^2.$$
\end{hypothesis}
The path $\textbf{X}$ can then be rigourously defined via the transformations (\ref{def-ipp-x-x})-(\ref{def-ipp-x-xx}), together with the additional expressions
$$X^{axx,ij}_{ts}=\int_s^t AS_{tu} x^{2,ij}_{us} \, du,$$
$$X^{xxx,ijk}_{ts}=x^{3,(ijk)}_{ts}+\int_s^t AS_{tu} x^{3,(ijk)}_{us} \, du.$$

\bigskip

\end{document}